\theoremstyle{plain}
\newtheorem{thm}{Theorem}[section]
\newtheorem{prp}[thm]{Proposition}
\newtheorem{lem}[thm]{Lemma}
\newtheorem{cor}[thm]{Corollary}
\newtheorem*{thm-nn}{Theorem}
\newtheorem*{prp-nn}{Proposition}
\newtheorem*{lem-nn}{Lemma}
\newtheorem*{cor-nn}{Corollary}
\newtheorem*{clm-nn}{Claim}
\newtheorem*{cnj-nn}{Conjecture}
\newtheorem*{prb-nn}{Problem}
\theoremstyle{definition}
\newtheorem{dfn}[thm]{Definition}
\newtheorem{exm}[thm]{Example}
\newtheorem*{dfn-nn}{Definition}
\newtheorem{rmk}[thm]{Remark}
\newcommand{\xyR}[1]{%
\xydef@\xymatrixrowsep@{#1}}
\newcommand{\xyC}[1]{%
\xydef@\xymatrixcolsep@{#1}}
\def\al{\alpha}
\def\be{\beta}
\def\la{\lambda}
\def\ro{\rho}
\def\si{\sigma}
\def\ta{\tau}
\def\ph{\phi}
\def\ps{\psi}
\def\Ga{\Gamma}
\def\De{\Delta}
\def\Aut{\operatorname{Aut}}
\def\Rep{\operatorname{Rep}}
\def\Mod{\operatorname{Mod}}
\def\calC{{\mathcal C}}
\def\calW{{\mathcal W}}
\def\bbN{{\mathbb N}}
\def\bbZ{{\mathbb Z}}
\def\bbP{{\mathbb P}}
\def\op{^{\mathrm{op}}} 
\def\inv{^{-1}}
\def\implies{\text{$\Rightarrow$}}
\def\impliedby{\text{$\Leftarrow$}}
\def\incl{\hookrightarrow}
\def\iso{\cong}
\def\ovl{\overline}
\def\Ds{\bigoplus}
\def\dsm#1,#2..#3{\bigoplus_{{#1}={#2}}^{#3}}
\def\sm#1,#2..#3{\sum_{{#1}={#2}}^{#3}}
\def\id{1\kern-.25em{\text{{\rm l}}}} 
\def\isoto{\ \raise.3ex\hbox{$^{\sim}$}\kern-.8em\hbox{$\to$}\ } 
\def\ang#1{{\langle #1 \rangle}}
\def\ya#1{\xrightarrow{#1}}
\def\blank{\operatorname{-}}
\def\bg{%
\family{cmr}\size{20}{12pt}\selectfont}
\def\bigzerou{%
\smash{\lower1.7ex\hbox{\bg 0}}}
\def\repr[#1;#2;#3;#4;#5]{
\left(
\begin{matrix}#1\\#2\end{matrix}
#3
\begin{matrix}#4\\#5\end{matrix}
\right)}
\def\smat#1{\begin{smallmatrix} #1 \end{smallmatrix}}
\def\k{\Bbbk}
\keywords{coverings, gradings, smash products, quiver presentations, Brauer graphs}
\begin{document}
\title{Smash products of group weighted bound quivers and Brauer graphs}
\author{Hideto Asashiba}

\begin{abstract}
Let  $\k$ be a field, $G$ a group, and
$(Q, I)$ a bound quiver.
A map $W\colon Q_1 \to G$ is called a $G$-{\em weight} on $Q$,
which defines a $G$-graded $\k$-category $\k(Q, W)$,
and $W$ is called {\em homogeneous} if $I$ is a homogeneous ideal
of the $G$-graded $\k$-category $\k(Q, W)$.
Then we have a $G$-graded
$\k$-category $\k(Q, I, W):= \k(Q, W)/I$.
We can then form a smash product
$\k(Q, I, W)\# G$ of $\k(Q, I, W)$ and $G$, which canonically defines a Galois
covering $\k(Q, I, W)\# G \to \k(Q, I)$ with group $G$
(we will see that all such Galois coverings to $\k(Q, I)$ have this form for some $W$).
First we give a quiver presentation 
$
\k(Q_{G, W}, I_{G, W}) \iso \k(Q, I, W)\# G
$
of the smash product $\k(Q, I, W)\# G$.
Next if $(Q, I, W)$ is defined by
a Brauer graph with an admissible weight,
then the smash product $\k(Q, I, W)\# G$ is again defined by
a Brauer graph,
which will be computed explicitly.
The computation is simplified by introducing a concept of Brauer permutations
as an intermediate one between Brauer graphs and Brauer bound quivers.
This extends and simplifies the result by Green--Schroll--Snashall on the
computation of coverings of Brauer graphs, which dealt with
the case that $G$ is a finite abelian group, while in our case $G$ is an arbitrary group.
In particular, it enables us to delete all cycles in Brauer graphs to transform it to an infinite Brauer tree.
\end{abstract}

\makeatletter
\@namedef{subjclassname@2010}{%
  \textup{2010} Mathematics Subject Classification}
\makeatother
\subjclass[2010]{18D05, 16W22, 16W50}

\thanks{This work is partially supported by Grant-in-Aid for Scientific Research 25610003 and 25287001 from JSPS}

\email{asashiba.hideto@shizuoka.ac.jp}
\address{Department of Mathematics, Graduate School of Science, Shizuoka University,
836 Ohya, Suruga-ku, Shizuoka, 422-8529, Japan.}
\maketitle

\section*{Introduction}
We fix a field $\k$ and a group $G$.
To include infinite coverings of $\k$-algebras into consideration we usually regard $\k$-algebras as locally bounded $\k$-categories with finite objects (see \cite{Ga81} for definitions).
The set of positive integers is denoted by $\bbN$.

\subsection*{Coverings, gradings and smash products}
Covering theory was introduced into representation theory of algebras
by papers Gabriel--Riedtmann \cite{GR79}, Riedtmann \cite{Rie80},
Bongartz-Gabriel \cite{BG81} and Gabriel \cite{Ga81}.
Since then it became an important tool to reduce many problems of algebras to
the corresponding ones for algebras/categories with simpler structures, e.g., 
for bound quiver categories whose quivers do not have oriented cycles.
Let $A$ be a locally bounded $\k$-category with a free $G$-action.
Then the orbit category $A/G$ and the canonical functor $F\colon A \to A/G$
is defined (see \cite[3.1]{Ga81}).  A functor $E\colon A \to B$ is called a {\em Galois covering} functor with group $G$
if it is isomorphic to $F$, namely if
there exists an isomorphism $H\colon A/G \to B$ such that $E = HF$.
In application it becomes a problem how to construct $A$ from $B$ in the setting above
when $B$ is given by a bound quiver $(Q, I)$.
The construction may depend on the presentation of $B$.
There are some constructions.

First of all one is given by a topological construction, or its combinatorial form
as stated and used in Waschb\"{u}sch \cite{Wa81}, Green \cite{Gr83},
Martinez-Villa--de la Pe\~{n}a \cite{MdP83} (cf.\ Bretscher--Gabriel \cite[3.3(a)]{BrG83}),
namely, first construct a covering $F\colon (Q', I') \to (Q, I)$ of bound quivers
by the following steps:
(1) to form a universal covering $\tilde{Q}$ with a canonical quiver morphism
$\tilde{F}\colon \tilde{Q} \to Q$ using a vertex $x_0$ of $Q$ as
a base point in a topological sense;
and (2) to make an orbit quiver $Q':= \tilde{Q}/N$ of $\tilde{Q}$ with a quiver morphism
$F\colon Q' \to Q$ induced from $\tilde{F}$
by using a suitable normal subgroup $N$ of the fundamental group
$\pi_1(Q, x_0)$ containing the normal subgroup $N(Q, I, x_0)$ defined by so-called {\em minimal relations} in $I$
(see Definition \ref{dfn:min-rel-hmg}(1)) that has a free action of the group $G:= \pi_1(Q, x_0)/N$
; and finally (3) to generate an ideal $I'$ of the path category $\k Q'$
by the morphisms of the form $L(\ro)$ for all liftings $L$ (see Definition \ref{dfn:covering}(4), (5)) of $F$
and minimal relations $\ro$ in $I$ (note here that zero relations are minimal relations in our definition).
Then (4) the functor $\k(Q', I') \to \k(Q, I) = B$
induced from the $\k$-linearization $\k F\colon \k Q' \to \k Q$ of $F$
is a Galois covering with group $G$, in particular, $\k (Q', I')/G \iso \k (Q, I)$.

Another more direct construction of Galois coverings using group gradings
is also given in \cite{Gr83}.
In this paper Green defined a notion of coverings of bound quivers
and has shown the existence of the universal covering and that
all coverings from connected bound quivers are Galois coverings
(Note that in the definitions of coverings of bound quivers it is assumed that
they are regular coverings between quivers.  See Definition \ref{dfn:covering}.)
He also suggests us an existence of a bijection between the set
$$
\calC:=\{(F, L) \mid F\colon (Q', I') \to (Q, I) \text{ a covering with $Q'$ connected}, L \text{ a lifting of }F\}
$$
and the set
$$
\calW:= \{(G, W) \mid G \text{ a group}, W \text{ a homogeneous $G$-weight such that } Q_{G, W}
\text{ connected}\}
$$
up to suitable equivalence relations on them, where
a $G$-{\em weight} on $Q$ is a map $Q_1 \to G$, and is said to be {\em homogeneous}
if $I$ is a homogeneous ideal of the path category $\k Q$
with the $G$-grading naturally defined by $W$.
Denote these correspondences by
$$
(F, L) \overset{\ph}{\mapsto} (\Aut(F), W_{F, L}), \text{ and }
(G, W) \overset{\ps}{\mapsto} (F_{G,W} \colon (Q_{G,W}, I_{G,W}) \to (Q, I), L_{G,W}),
$$
where
$\Aut(F):= \{p \in \Aut(Q', I')\mid Fp = F\}$.
(These are given in the proof of \cite[Theorem 3.2, Theorem 3.4]{Gr83}, respectively.
See Definition \ref{dfn:cov-by-weight} for details of the latter.)
Then Theorem 3.4 states that $\ph\ps = \id$.
But $\ps\ph = \id$ is not directly stated.
Instead, in Theorem 3.2, it was shown that
the category $\Rep_G (Q, I)$ of $G$-graded representations of $(Q, I)$
is equivalent to the category $\Rep (Q', I')$ of representations of $(Q', I')$
(although he dealt only with the point-wise finite-dimensional representations).
This means that the category $\Mod_G \k(Q, I)$ of $G$-graded $\k(Q, I)$-modules
is equivalent to the category $\Mod \k(Q', I')$ of $\k(Q', I')$-modules.
By the equality $\ph\ps = \id$ we can replace $(Q', I')$ by $(Q_{G,W}, I_{G,W})$
starting from $(G, W)$ such that $Q_{G,W}$ is connected.
Then we have equivalences of categories:
\begin{equation}\label{eq:cat-eqs}
\Mod \k(Q', I') \simeq \Mod_G \k(Q, I) \simeq \Mod \k(Q_{G,W}, I_{G,W}),
\end{equation}
and hence an isomorphism $\k(Q', I') \iso \k(Q_{G,W}, I_{G,W})$ because these are skeletal,
which means that $\ps\ph = \id$ up to a suitable equivalence relation on $\calC$.
In particular, from this we know that the class of coverings $F\colon (Q', I') \to (Q, I)$
with $Q'$ connected (namely of Galois coverings constructed by the first one above)
is exactly the class of coverings of the form
$F_{G,W} \colon (Q_{G,W}, I_{G,W}) \to (Q, I)$ for some $(G, W)$ with
$Q_{G,W}$ connected,
which gives us the second construction of Galois coverings that is more direct
than the first one.

Another theoretical construction due to Cibils--Marcos \cite{CM06}
uses the smash product of a $G$-graded category and $G$.
Note that if $A$ is a $\k$-category with a $G$-action, then the orbit category $B \iso A/G$ has
a natural $G$-grading.
If $B$ is a $G$-graded category, then the smash product $B\# G$ of $B$ and $G$ is defined,
which has a free $G$-action, and the canonical functor $B \# G \to B$ turns out to be
a Galois covering with group $G$, thus in particular, $(B \# G)/G \iso B$.
Therefore the third construction is given as follows:
(1) to give a $G$-grading on $B$; (2) to form a smash product $B \# G$.
Then (3) the canonical functor $B\# G \to B$ is a Galois covering with group $G$.

We can combine the second and the third constructions as follows, which is one of the
purposes of this paper.
The connections of module categories above were generalized
by Civils--Marcos \cite{CM06} by using
smash products of $G$-graded categories and the group $G$,
which was further generalized in \cite{Asa11, Asa15} as a 2-categorical version
of Cohen--Montgomery duality \cite{CoM84} to show that orbit category constructions and
smash product constructions are mutually inverse (see also Tamaki \cite{Tam09}).
In particular, in \cite{CM06} (or in \cite{Asa11}) it was shown
that the category $\Mod_G \k(Q, I)$ is equivalent to the category $\Mod \k(Q, I)\# G$
of modules over the smash product $\k(Q, I)\# G$.
Then by equivalences \eqref{eq:cat-eqs}
we see that $\Mod \k(Q_{G,W}, I_{G,W}) \simeq \Mod \k(Q, I)\# G$,
which indirectly shows that $(Q_{G,W}, I_{G,W})$ is a quiver presentation of $\k(Q, I)\# G$
when $Q_{G,W}$ is connected.
In Section 1 we will show this fact with a direct proof without
the connectedness assumption on $Q_{G,W}$
(Theorem \ref{thm:quiv-pres-sm-prod}).
It seems there are no explicit quiver presentations
of smash products in literature so far,
while quiver presentations for orbit categories (or more generally Grothendieck constructions)
are computed such as in Reiten--Riedtmann \cite{RR85} or in our paper \cite{Asa-Kim}.
Theorem \ref{thm:quiv-pres-sm-prod} gives us quiver presentations of smash products
of $G$-graded locally bounded category with $G$-gradings defined by $G$-weights.
As a consequnce, we see that the Galois coverings constructed by the first one
are exactly those constructed by the third one with gradings given by $G$-weights.
Note that there exist other types of $G$-gradings that are not defined by $G$-weights
(see e.g., Dugas \cite[Section 6]{Dug10} for an important example that reduces even
a nonstandard representation-finite self-injective algebra to a Brauer tree algebra).
Therefore coverings given by smash products are wider than those given by topological way.

\subsection*{Brauer graph algebras}
A Brauer graph is essentially a non-oriented graph with two maps from the set of vertices $V$:
the first (resp.\ second) one assigns to each vertex $x$ a cyclic permutation of ``half edges'' connected to $x$
(resp.\ a natural number), the second one is called the {\em multiplicity} of the Brauer graph.
To deal with Brauer graphs without ambiguity we have to distinguish two ends of loops.
To this end the notion of half edges is introduced (e.g., see Adachi--Aihara--Chan \cite{AAC}), thus
the formulation using half edges is necessary only when the graph in question has loops.
The set $E$ of half edges is just the double of the set of edges,
and the edges $\{x, y\}\ (x \ne y)$ can be presented by an involution $\ta\colon x \leftrightarrow y$
acting freely on $E$ (note that we distinguish two ends of an edge even if it is a loop)
as the $\ang{\ta}$-orbits of $E$.
{\em We assume throughout the paper that each graph has at least one edge
and no isolated vertices
$($i.e., each vertex is connected to an edge$)$}.
Then since the set $E$ of half edges is the disjoint union of the form
\begin{equation}\tag{$*$}
E = \bigsqcup_{x \in V}\{e \in E \mid e \text{ is connected to }x\},
\end{equation}
the family of cyclic permutations of a Brauer graph can be seen
as the unique decomposition of a permutation $\si$ of $E$
into to the product of cyclic permutations (up to orderings), and hence this family itself is expressed by $\si$.
Finally, the set of vertices $V$ is regarded as the set $E/\si$ of $\ang{\si}$-orbits of $E$
because we have a bijection between $V$ and $E/\si$ by ($*$).
Note that a loop is expressed by a $\ang{\ta}$-orbit that is included in a $\ang{\si}$-orbit.
As a consequence in an abstract sense,
a Brauer graph can be seen just as a quadruple $(E, \si, \ta, m)$ of a set $E$,
a permutation $\si$ of $E$, an involution $\ta$ acting freely on $E$, and a map
$E/\si \to \bbN$,
which we call a {\em Brauer permutation}.
We always require that each $\ang{\si}$-orbit is a finite set to have a connection with
the multiplicity.
This formulation is convenient to compute coverings of Brauer graph algebras because
all necessary constructions can be expressed directly in terms of these four ingredients.
Moreover, it is easy to recover both the Brauer graph and the bound quiver
of the Brauer graph algebra corresponding to it as seen in Example \ref{exm:Br-gr-quiv}:
To obtain the Brauer graph shrink each $\ang{\si}$-orbit to one point, and
to obtain the bound quiver $(Q, I)$ define $Q$ by shrinking each $\ang{\ta}$-orbit to one point, and
then a set of generators of $I$ is given by $\si$ and $\ta$ automatically
(see Definition \ref{dfn:Br-gr-bdquiv}).

Now in \cite{GSS14} Green--Schroll--Snashall gave a way to compute
Galois coverings of Brauer graph algebras
with a finite abelian group $G$ which are again Brauer graph algebras.
This can be used to delete multiplicities, loops and multiple edges in
Brauer graphs and enables us to reduce problems on general Brauer graph algebras
to the corresponding problems on such Brauer graph algebras.
However, the description of the construction seems to be complicated.
To make it simple we introduced the notion of Brauer permutations explained above.
In Section 2 we apply the result in Section 1
to give a simple construction of  coverings of  a Brauer graph algebra
by a Brauer graph algebra/category in terms of Brauer permutations
without any assumptions on the group $G$
(Theorem \ref{thm:cover-Brauer}).
As applications we give ways of deleting multiplicities, loops, multiple edges,
and cycles in Brauer graphs
(Propositions \ref{prp:del-multi}, \ref{prp:del-loop}, \ref{prp:del-double}, and \ref{prp:del-cycle}).
The first three are already stated in \cite{GSS14},
but here we give their complete proofs and another much simpler unified deletion of loops.
The last one gives us a systematic way to delete all cycles (including loops, multiple edges)
in a Brauer graph to connect it to an infinite Brauer tree.

Finally in Section 3 we illustrate Propositions in Section 2 by some examples.

\section*{Acknowledgments}
This work was announced at Workshop on Brauer Graph Algebras held in March, 2016
in Stuttgart, and completed to write as a paper during my stay in Bielefeld in July--September, 2017.
I would like to thank Steffen K\"{o}nig, William Crawley-Boevey, Henning Krause, and Claus M.~Ringel
and all members of algebra seminars in both universities for their kind hospitality.
Finally, I also would like to thank the referee for his/her careful reading of the manuscript
and for some language corrections.

\section{smash products of bound quivers}

Let $Q = (Q_0, Q_1, s, t)$ be a quiver.
For each pair $(x, y)$ of vertices of $Q$ we denote by
$\bbP_Q(x,y)$ the set of all paths $\mu$ in $Q$ from $x$ to $y$ and set
$s(\mu):= x$ and $t(\mu):= y$.
We denote by $\k Q$ the $\k$-linear path category of $Q$
(also often regarded as the $\k$-linear path algebra when $Q$ is a finite quiver)
and by $\k Q^+$ the ideal of $\k Q$ generated by the set $Q_1$ of arrows.
An ideal $I$ of $\k Q$ is called {\em pre-admissible}
if $I$ is contained in $\k Q^+$ and
for each $x \in Q_0$ there exists a positive integer $m_x$ such that
all the paths of $Q$ with $x$ their source or target of length greater than
$m_x$ are contained in $I$.
(Note that $I$ is {\em admissible} if it is pre-admissible and
$I \le \k Q^{+2}$.)
A  {\em bound quiver} is a pair
$(Q, I)$ of a locally finite quiver $Q$ and a pre-admissible ideal $I$ of $\k Q$.
Note that $I$ is not assumed to be an admissible ideal of $\k Q$ in this paper.
This is because we want to include Brauer quiver algebras in considerations
whose relation ideals are pre-admissible but not always admissible.
Also note that if $(Q, I)$ is a bound quiver, then $\k(Q, I)$ is a locally bounded category
and that its Jacobson radical is given by $\k Q^+/I$.
(Since $I$ is not assumed to be admissible, $Q$ is not uniquely determined by $\k(Q, I)$.)
A {\em bound quiver morphism} $F\colon (Q, I) \to (Q', I')$ is a quiver morphism
$F\colon Q \to Q'$ of locally finite quivers $Q, Q'$
such that $\k F(I(x, y)) \le I'(Fx, Fy)$ for all $x, y \in Q_0$,
where $\k F$ is the $\k$-linearlization of $F$.

Throughout this section $A$ and $B$ are $G$-graded small $\k$-categories.

\begin{dfn}
We define two kinds of {\em smash products} $A \# G$ and $G \# A$ of $A$ and $G$.
\begin{enumerate}
\item
$A\# G$ is a $\k$-category with a right $G$-action defined as follows.
\begin{itemize}
\item
$(A \# G)_0:= \{x^{(a)}:= (x, a) \mid x \in A_0, a \in G\}$.
\item
$(A \# G)(x^{(a)}, y^{(b)}):= \{{}_{y^{(b)}}f_{x^{(a)}}:= (y^{(b)}, f, x^{(a)}) \mid f \in A^{ba\inv}(x, y) \}$, which is identified with $A^{ba\inv}(x, y)$ by the second projection
for all $x^{(a)}, y^{(b)} \in (A \# G)_0$.
\item
The composition of $A\# G$ is defined by the commutative diagram 
$$
\xymatrix{
(A \# G)(y^{(b)}, z^{(c)}) \times (A\# G)(x^{(a)}, y^{(b)}) &
(A \# G)(x^{(a)}, z^{(c)})\\
A^{cb\inv}(y, z) \times A^{ba\inv}(x, y) & A^{ca\inv}(x, z)
\ar"1,1";"1,2"
\ar@{=}"1,1";"2,1"
\ar@{=}"1,2";"2,2"
\ar"2,1";"2,2"
}
$$
for all $x^{(a)}, y^{(b)}, z^{(c)} \in (A\# G)_0$, where the morphism
in the bottom row is given by the composition of $A$;
namely, we have $${}_{z^{(c)}}g_{y^{(b)}}\circ {}_{y^{(b)}}f_{x^{(a)}}:= {}_{z^{(c)}}gf_{x^{(a)}}$$
for all $(g, f) \in A^{cb\inv}(y, z) \times A^{ba\inv}(x, y)$.
\item
$x^{(a)} \cdot c:= x^{(ac)}$ and
${}_{y^{(b)}}f_{x^{(a)}}\cdot c:= {}_{y^{(bc)}}f_{x^{(ac)}}$ 
for all $x^{(a)}, y^{(b)}\in (A\# G)_0, c\in G$, and
$f\in (A\# G)(x^{(a)}, y^{(b)}) = A^{ba\inv}(x, y) = (A\# G)(x^{(ac)}, y^{(bc)})$.
\end{itemize}
\item
$G\# A$ is a $\k$-category with a left $G$-action defined as follows.
\begin{itemize}
\item
$(G\# A)_0:= \{{}^{(a)}x:= (a, x) \mid a\in G, x\in A_0\}$.
\item
$
(G\# A)({}^{(a)}x, {}^{(b)}y):= \{
{}_{{}^{(b)}y}f_{^{(a)}x}:= (^{(b)}y, f, ^{(a)}x) \mid f \in A^{b\inv a}(x, y)
\}$,
which is identified with $A^{b\inv a}(x, y)$
for all ${}^{(a)}x, {}^{(b)}y \in (G\# A)_0$
by the second projection.

\item
The composition of $G\# A$ is defined by the commutative diagram 
$$
\xymatrix{
(G\# A)({}^{(b)}y, {}^{(c)}z) \times (G\# A)({}^{(a)}x, {}^{(b)}y) &
(G \# A)({}^{(a)}x, {}^{(c)}z)\\
A^{c\inv b}(y, z) \times A^{b\inv a}(x, y) & A^{c\inv a}(x, z)
\ar"1,1";"1,2"
\ar@{=}"1,1";"2,1"
\ar@{=}"1,2";"2,2"
\ar"2,1";"2,2"
}
$$
for all ${}^{(a)}x, {}^{(b)}y, {}^{(c)}z \in (G\# A)_0$, where the morphism
in the bottom row is given by the composition of $A$.
\item
$c\cdot{}^{(a)}x:= {}^{(ca)}x$, and
$c\cdot f:= f$  for all ${}^{(a)}x, {}^{(b)}y\in G\# A, c\in G$
and $f\in (G\# A)({}^{(a)}x, {}^{(b)}y) = A^{b\inv a}(x, y) = (G\# A)({}^{(ca)}x, {}^{(cb)}y)$.
\end{itemize}
\end{enumerate}
\end{dfn}

\begin{rmk}
\label{rmk:free-action-can-cov}
(1) Note that both $G$-actions defined above are free actions.

(2) Define a  functor $F\colon A \# G \to A$ by 
$$
\begin{aligned}
x^{(a)} &\mapsto x \ \text{and}\\
(A \# G)(x^{(a)}, y^{(b)}) &= A^{ba\inv}(x, y) \incl A(x, y) = A(F(x^{(a)}), F(y^{(b)}))
\end{aligned}
$$
for all $x^{(a)}, y^{(b)} \in (A\# G)_0$.
Then $F$ is a (strict) $G$-covering in the sense of \cite{Asa11}.
If $A$ is a locally bounded category then $F$ is a Galois covering with group $G$.
Indeed, if we denote by $(A \# G)/\!_g\, G$ the generalized orbit category of $A \# G$ by $G$
defined in \cite{Asa11} and by $(A \# G)/ G$ the usual orbit category defined in \cite{Ga81},
then we have equivalences $(A \# G)/ G \simeq (A \# G)/\!_g\, G \simeq A$.
The first equivalence follows by the fact that the $G$-action is free,
and the second is shown in papers such as \cite{CM06, Asa11, Asa15}.
\end{rmk}

\begin{lem}
Set $(A\op)^a(x, y):= A^{a\inv}(y, x)$ for all $a\in G, x, y\in A_0$.
Then this defines a $G$-grading on $A\op$.
We usually regard $A\op$ as a $G$-graded category
with this $G$-grading.
\end{lem}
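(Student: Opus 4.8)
The plan is to verify directly the three defining axioms of a $G$-grading for the assignment $(A\op)^a(x,y):= A^{a\inv}(y,x)$: that the homogeneous components give a direct-sum decomposition of each morphism space, that composition is compatible with the grading, and that each identity morphism is homogeneous of degree equal to the unit $e$ of $G$. Two of these are essentially formal, so I would dispose of them first.

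For the decomposition, since $A$ is $G$-graded we have $A(y,x) = \bigoplus_{b\in G} A^b(y,x)$, and as $a$ ranges over $G$ the element $a\inv$ ranges over all of $G$ as well (inversion being a bijection of $G$); hence
$$
\bigoplus_{a\in G}(A\op)^a(x,y) = \bigoplus_{a\in G}A^{a\inv}(y,x) = \bigoplus_{b\in G}A^b(y,x) = A(y,x) = A\op(x,y),
$$
as required. The identity axiom is equally immediate: the identity of $x$ in $A\op$ is the identity of $x$ in $A$, which lies in $A^e(x,x) = A^{e\inv}(x,x) = (A\op)^e(x,x)$.

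The crux is the compatibility of composition with the grading, and here I would be careful about the interplay between the order-reversal in $A\op$, the inversion in the exponent, and the possibly non-abelian multiplication of $G$. Recall that in a $G$-grading one has $\deg(g\circ f) = \deg(g)\cdot\deg(f)$ whenever $g$ is composable after $f$ (this is the convention built into the smash-product composition above). Take $f\in (A\op)^a(x,y) = A^{a\inv}(y,x)$ and $g\in (A\op)^b(y,z) = A^{b\inv}(z,y)$, so that in $A$ these are a morphism $f\colon y\to x$ of degree $a\inv$ and a morphism $g\colon z\to y$ of degree $b\inv$. Their composite in $A\op$ is, by definition of the opposite category, the morphism $f\circ g\colon z\to x$ computed in $A$, which has degree $a\inv b\inv = (ba)\inv$. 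Therefore this composite lies in $A^{(ba)\inv}(z,x) = (A\op)^{ba}(x,z)$, exactly the homogeneous component prescribed by the grading axiom for a degree-$a$ morphism followed by a degree-$b$ morphism.

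The only genuine obstacle is this bookkeeping: one must not replace $\deg(g)\deg(f)$ by $\deg(f)\deg(g)$, because it is precisely the reversal of the order of composition together with the inversion of the exponent that makes the two conventions consistent, and tacitly assuming $G$ abelian or swapping a product would break the axiom. Once the degrees are tracked correctly the verification is routine, and as a consistency check I would note that the construction is involutive, $(A\op)\op = A$ as $G$-graded categories, since $\bigl((A\op)\op\bigr)^a(x,y) = (A\op)^{a\inv}(y,x) = A^a(x,y)$.
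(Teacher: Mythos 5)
Your proof is correct: the direct-sum decomposition, the unit-degree check on identities, and the careful computation showing that a degree-$a$ morphism followed by a degree-$b$ morphism in $A\op$ lands in $A^{(ba)\inv}(z,x) = (A\op)^{ba}(x,z)$ are exactly the verifications the paper summarizes as ``straightforward.'' In particular you track the non-abelian bookkeeping (reversal of composition compensating the inversion in the exponent) correctly, so nothing is missing.
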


\begin{proof}
Straightforward.
\end{proof}

\begin{lem}
We have an isomorphism
$G\# A \iso (A\op \# G)\op$.
\end{lem}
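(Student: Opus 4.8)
The plan is to construct an explicit isomorphism of $\k$-categories $\Phi\colon G\# A \to (A\op \# G)\op$ by matching objects and morphisms according to the two definitions, and then to verify that $\Phi$ respects composition and that it intertwines the $G$-actions. The guiding observation is that both sides are built from the graded pieces of $A$ (equivalently of $A\op$), with the bookkeeping of the two group labels being the only thing that differs. On the left, $(G\# A)({}^{(a)}x,{}^{(b)}y) = A^{b\inv a}(x,y)$; on the right, applying $\op$ swaps source and target, and by the previous lemma $(A\op)^c(u,v) = A^{c\inv}(v,u)$, so a morphism of $A\op \# G$ from $u^{(c)}$ to $v^{(d)}$ lives in $(A\op)^{dc\inv}(u,v) = A^{(dc\inv)\inv}(v,u) = A^{cd\inv}(v,u)$. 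I expect the correct assignment to be $\Phi({}^{(a)}x) := x^{(a)}$ on objects, where I read the right-hand object set $((A\op\#G)\op)_0 = (A\op \# G)_0 = \{x^{(a)} \mid x\in A_0,\ a\in G\}$.

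\emph{First I would} check that the hom-sets match under $\Phi$. A morphism ${}^{(a)}x \to {}^{(b)}y$ on the left is an element of $A^{b\inv a}(x,y)$. On the right, a morphism $x^{(a)} \to y^{(b)}$ in $(A\op\#G)\op$ is by definition a morphism $y^{(b)} \to x^{(a)}$ in $A\op \# G$, which lives in $(A\op)^{ab\inv}(y,x) = A^{(ab\inv)\inv}(x,y) = A^{b a\inv \cdots}$—here I must track the inverse carefully: $(A\op)^{ab\inv}(y,x) = A^{(ab\inv)\inv}(x,y) = A^{b\inv a}(x,y)$. So the two hom-sets are literally the same graded piece $A^{b\inv a}(x,y)$, and I would let $\Phi$ act as the identity on these underlying elements. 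This identification of graded pieces is the heart of the matter and is the step I would write out most carefully, since it is exactly where a sign-like error (a misplaced inverse in the exponent) could slip in.

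\emph{Next I would} verify functoriality, i.e.\ that $\Phi$ sends the composite $g\circ f$ computed in $G\# A$ to the composite $\Phi(g)\circ\Phi(f)$ computed in $(A\op\#G)\op$. Composition in $G\# A$ of ${}^{(a)}x \xrightarrow{f} {}^{(b)}y \xrightarrow{g} {}^{(c)}z$ is the composite $gf$ taken in $A$. Composition in $(A\op\#G)\op$ reverses order, so it is the composite in $A\op\#G$ of $\Phi(f)\colon y^{(b)}\to x^{(a)}$ after $\Phi(g)\colon z^{(c)}\to y^{(b)}$, which unwinds to the composite in $A\op$, and composition in $A\op$ is reverse composition in $A$—two order reversals that cancel to give exactly $gf$ in $A$. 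I expect the two grading-index computations to agree automatically once the object correspondence is fixed. Finally, for the $G$-action I would recall that $G\# A$ carries a left action and $(A\op\#G)\op$ inherits the left action that is the opposite of the right action on $A\op\#G$; comparing $c\cdot{}^{(a)}x = {}^{(ca)}x$ with $x^{(a)}\cdot c = x^{(ac)}$ transported through $\op$ should show $\Phi$ is $G$-equivariant.

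\emph{The hard part will be} none of the algebra per se but rather the clerical discipline of keeping the four places where a group element can be inverted consistent—the grading shift in $A\op$, the two source/target swaps from the two occurrences of $\op$, and the left-versus-right convention for the two smash products. The cleanest way to avoid error is to reduce everything to a single invariant, the graded component $A^{b\inv a}(x,y)$ of $A$ indexed by the ordered pair $(x,y)$ and the group element $b\inv a$, and to check that both constructions assign precisely this component to the morphism space between the corresponding objects; once that is established, bijectivity of $\Phi$ is immediate and compatibility with composition and the action follows by direct substitution.
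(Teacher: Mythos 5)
Your overall strategy (an explicit functor, matching of graded hom-spaces, and the observation that the two op-reversals cancel in the composition check) is the right one, but the computation at what you yourself call ``the heart of the matter'' contains precisely the misplaced inverse you warned against, and it is fatal to your choice of object map. With $\Phi({}^{(a)}x):=x^{(a)}$, a morphism $x^{(a)}\to y^{(b)}$ of $(A\op\#G)\op$ is a morphism $y^{(b)}\to x^{(a)}$ of $A\op\#G$, which lies in $(A\op)^{ab\inv}(y,x)=A^{(ab\inv)\inv}(x,y)$; but $(ab\inv)\inv=ba\inv$, \emph{not} $b\inv a$ as you wrote. So under the identity correspondence of labels the two hom-spaces are $A^{b\inv a}(x,y)$ on the left and $A^{ba\inv}(x,y)$ on the right, which are different graded components of $A(x,y)$ in general (certainly for nonabelian $G$, and even for abelian $G$ they need not coincide as subspaces). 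Hence there is no ``identity on underlying elements'' map between them, and your $\Phi$ is not even a functor.

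The repair is to invert the group label on objects: define $\Phi({}^{(a)}x):=x^{(a\inv)}$. Then
$$
((A\op\#G)\op)(x^{(a\inv)},y^{(b\inv)})=(A\op\#G)(y^{(b\inv)},x^{(a\inv)})
=(A\op)^{a\inv b}(y,x)=A^{(a\inv b)\inv}(x,y)=A^{b\inv a}(x,y),
$$
which is literally $(G\#A)({}^{(a)}x,{}^{(b)}y)$, so $\Phi$ may now be taken to be the identity on underlying elements. With this fixed, your composition argument (two order reversals cancelling to give the composite in $A$) goes through verbatim, and $G$-equivariance holds in the natural sense: the left action $c\cdot{}^{(a)}x={}^{(ca)}x$ on $G\#A$ corresponds under $\Phi$ to the left action on $(A\op\#G)\op$ obtained from the right action by $c\cdot x^{(g)}:=x^{(g)}\cdot c\inv$, since $\Phi({}^{(ca)}x)=x^{(a\inv c\inv)}=x^{(a\inv)}\cdot c\inv$. (The paper itself records the proof only as ``Straightforward,'' so the content of your write-up is exactly this bookkeeping --- which is why getting the single inverse right is the whole exercise.)
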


\begin{proof}
Straightforward.
\end{proof}

In papers published before we dealt with the smash product $G\# A$ (with the notation
$A\# G$), but
in the sequel we use the terminology ``the smash product of $A$ and $G$''
to mean the smash product $A\# G$ defined above.

\begin{lem}\label{lem:factor-alg-smash}
Let $A$ be a $G$-graded category and $I$ a homogeneous ideal of $A$.
For each $x^{(a)}, y^{(b)} \in (A\# G)_0$ $(x, y \in A_0, a, b \in G)$
define $(I\#G)(x^{(a)}, y^{(b)}):= I^{ba\inv}(x, y)$ be a $\k$-subspace of $(A\#G)(x^{(a)}, y^{(b)})$.
Then $I\# G$ turns out to be an ideal of $A\# G$ and we have
a natural isomorphism
$$
(A\# G)/(I\# G) \iso (A/I)\# G
$$
as categories with right $G$-actions, by which we identify these categories.
\end{lem}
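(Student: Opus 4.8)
The plan is to prove the two assertions in turn, with the entire argument resting on the homogeneity of $I$. First I would verify that $I\# G$ is an ideal of $A\# G$. Given $f \in (I\# G)(x^{(a)}, y^{(b)}) = I^{ba\inv}(x, y)$ and an arbitrary morphism $g \in (A\# G)(y^{(b)}, z^{(c)}) = A^{cb\inv}(y, z)$, the composite $gf$ is computed in $A$ and lands in degree $cb\inv\cdot ba\inv = ca\inv$. Since $I$ is an ideal of $A$ we have $gf \in I(x, z)$, and since both $g$ and $f$ are homogeneous, $gf$ is homogeneous of degree $ca\inv$; homogeneity of $I$ then yields $gf \in I(x, z)\cap A^{ca\inv}(x, z) = I^{ca\inv}(x, z) = (I\# G)(x^{(a)}, z^{(c)})$. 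The symmetric argument handles precomposition, so $I\# G$ is a two-sided ideal.

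Next I would record that, precisely because $I$ is homogeneous, the quotient $A/I$ carries the induced $G$-grading with $(A/I)^{g}(x, y) = A^{g}(x, y)/I^{g}(x, y)$ for every $g \in G$ and all $x, y \in A_0$; in particular its degree-$ba\inv$ component between $x$ and $y$ is exactly $A^{ba\inv}(x, y)/I^{ba\inv}(x, y)$. This is the one conceptual point on which everything hinges and is the step I expect to require the most care: without homogeneity of $I$ the quotient $A/I$ would not be graded and the right-hand side of the claimed isomorphism would not even be defined.

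With this in hand, both categories have the same object set $\{x^{(a)}\}$, and the morphism space between $x^{(a)}$ and $y^{(b)}$ is in each case
$$
A^{ba\inv}(x, y)/I^{ba\inv}(x, y),
$$
namely $(A\# G)(x^{(a)}, y^{(b)})/(I\# G)(x^{(a)}, y^{(b)})$ on the left and $(A/I)^{ba\inv}(x, y)$ on the right. I would therefore define $\Phi\colon (A\# G)/(I\# G) \to (A/I)\# G$ to be the identity on objects and the canonical identification on each morphism space. Functoriality is then immediate: in both categories the composite of two cosets is obtained by lifting to $A$, composing there, and projecting, so $\Phi$ preserves composition and identities strictly. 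Likewise the right $G$-action sends $x^{(a)} \mapsto x^{(ac)}$ and leaves the underlying element of $A^{ba\inv}(x, y)$ unchanged in both categories, the index shift being degree-preserving since $(bc)(ac)\inv = ba\inv$; hence $\Phi$ is $G$-equivariant. As $\Phi$ is bijective on objects and on every morphism space, it is an isomorphism of categories with right $G$-actions, and its construction is visibly natural in $(A, I)$.
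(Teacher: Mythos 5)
Your proposal is correct and follows essentially the same route as the paper: verify the ideal property via homogeneity (the product of homogeneous morphisms lies in $I \cap A^{ca\inv} = I^{ca\inv}$), identify objects, identify morphism spaces through $A^{ba\inv}(x,y)/I^{ba\inv}(x,y) \iso (A^{ba\inv}(x,y)+I(x,y))/I(x,y) = (A/I)^{ba\inv}(x,y)$, and then check that compositions and the right $G$-actions agree under this identification. The paper merely packages the ideal check as a single triple-product inclusion and writes out the commutative diagrams explicitly, but the substance is identical.
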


\begin{proof}
Let $x^{(a)}, x'^{(a')}, y^{(b)}, y'^{(b')} \in (A\# G)_0$.
Then
$$
\begin{aligned}
&\ (A\# G)(y^{(b)}, y'^{(b')})\cdot (I\# G)(x^{(a)}, y^{(b)})\cdot (A\# G)(x'^{(a')}, x^{(a)})\\
=&\ A^{b'b\inv}(y, y')\cdot I^{ba\inv}(x,y) \cdot A^{a{a'}\inv}(x', x)
\le  I^{b'{a'}\inv}(x', y') = (I\# G)(x'^{(a')}, y'^{(b')}).
\end{aligned}
$$
Therefore $I\# G$ is an ideal of $A\# G$.

Next the object sets of both hand sides coincide.
Indeed,
$$
((A\# G)/(I\# G))_0 = (A\# G)_0 = A_0 \times G = (A/I)_0 \times G
= ((A/I) \# G)_0.
$$

Moreover, morphism spaces of both hand sides coincide.
Indeed,
\begin{equation}\label{eq:identify}
\begin{aligned}
((A\# G)/(I\# G))(x^{(a)}, y^{(b)}) &= (A\# G)(x^{(a)}, y^{(b)})/(I\# G)(x^{(a)}, y^{(b)})\\
&= A^{ba\inv}(x, y)/I^{ba\inv}(x, y)\\
&\iso (A^{ba\inv}(x, y) + I(x,y))/I(x, y)\\
&= (A/I)^{ba\inv}(x, y)\\
&= ((A/I)\# G)(x^{(a)}, y^{(b)}).
\end{aligned}
\end{equation}
We may identify both morphism spaces because the isomorphism in \eqref{eq:identify} is natural.

In addition, the compositions of both hand sides coincide.
Indeed,
we have the commutative diagram
$$
\xymatrix{
((A\# G)/(I\# G))(y^{(b)}, z^{(c)}) \times ((A\# G)/(I\# G))(x^{(a)}, y^{(b)}) &
((A\# G)/(I\# G))(x^{(a)}, z^{(c)})\\
(A/I)^{cb\inv}(y, z) \times (A/I)^{ba\inv}(x, y) & (A/I)^{cd\inv}(x, z)\\
((A/I)\# G)(y^{(b)}, z^{(c)}) \times ((A/I)\# G)(x^{(a)}, y^{(b)}) &
((A/I)\# G)(x^{(a)}, z^{(c)}),
\ar"1,1";"1,2"
\ar"2,1";"2,2"
\ar"3,1";"3,2"
\ar@{=}"1,1";"2,1"
\ar@{=}"2,1";"3,1"
\ar@{=}"1,2";"2,2"
\ar@{=}"2,2";"3,2"
}
$$
where the horizontal maps are given by
the compositions of the corresponding categories.

Finally, the right $G$-actions on both sides coincide.
Indeed,
the coincidence on objects are trivial, and for each
$(x, a), (y,b) \in A_0 \times G$, $f \in A^{ba\inv}(x, y)$ and $c \in G$
the action of $c$ on the left and right hand sides are given by
$$
\begin{aligned}
{}_{y^{(b)}}f_{x^{(a)}} + (I\# G)(x^{(a)}, y^{(b)}) &\mapsto
{}_{y^{(bc)}}f_{x^{(ac)}} + (I\# G)(x^{(ac)}, y^{(bc)})
\ \text{and}\\
{}_{y^{(b)}}(f + I(x, y))_{x^{(a)}} &\mapsto
{}_{y^{(bc)}}(f + I(x, y))_{x^{(ac)}},
\end{aligned}
$$
respectively.
These actions coincide by our identification  in \eqref{eq:identify}.
\end{proof}

\begin{dfn}
\label{dfn:min-rel-hmg}
Let $(Q,I)$ be a bound quiver.
Then a map $W\colon Q_1 \to G$ is called a $G$-{\em weight} on $Q$.
\begin{enumerate}
\item
An element $\ro = \sum_{i=1}^n t_i\mu_i$ of $I$
$(t_i \in \k, \mu_i \text{ are parallel paths of }Q)$ is called a
{\em minimal relation} if $\sum_{i\in J}t_i \mu_i \not\in I$ for every
proper subset $J$ of $\{1, \dots, n\}$.
\item
For each path $\mu = \al_n\dots \al_1$ of $Q$ we set
$$
W(\mu):= W(\al_n)\dots W(\al_1).
$$
\item
$W$ is called a {\em homogeneous weight} on $(Q, I)$ if
for each minimal relation $\sum_{i=1}^n t_i\mu_i \in I$
we have
$$
W(\mu_i) = W(\mu_1)
$$
for all $i = 1, \dots, n$. 
\end{enumerate}
\end{dfn}

\begin{exm}
\label{exm:br1}
Let $G$ be the additive group $\bbZ$,
$Q$ the quiver
$$
\vcenter{\xymatrix{
2\\
1\\
3
\ar@/^/"1,1";"2,1"^{\al_2}
\ar@/^/"2,1";"1,1"^{\al_1}
\ar@/_/"2,1";"3,1"_{\be_1}
\ar@/_/"3,1";"2,1"_{\be_2}
}},
$$
$I$ the ideal $\ang{\al_2\al_1 - \be_2\be_1, \be_1\al_2, \al_1\be_2,
\al_2\al_1\al_2, \be_2\be_1\be_2}$ of $\k Q$, and $W$ the $G$-weight on $Q$
defined by $W(\al_1) = 0 = W(\be_1), W(\al_2) = 1 = W(\be_2)$.
Then $W$ is a homogeneous weight on $(Q, I)$.
\end{exm}

\begin{rmk}
Assume that $W$ is a homogeneous weight on $(Q, I)$.
Then 
\begin{enumerate}
\item
$I$ is a homogeneous ideal of the $G$-graded $\k$-category $\k Q$, where
the $G$-grading is given by
$$
(\k Q)^a(x, y):= \Ds_{\smat{\mu\in Q_{\ge 0}(x, y)\\W(\mu)=a}}\k \mu
$$
for all $a \in G, x, y \in Q_0$.
\item
We set $\k(Q, I, W)$ to be the $G$-graded category $\k Q/I$ with the
$G$-grading given by
$$
(\k Q/I)^a(x, y):= ((\k Q)^a(x, y) + I(x, y))/I(x, y)
$$
for all $a \in G, x, y \in Q_0$.
\end{enumerate}
\end{rmk}

\begin{dfn}\label{dfn:cov-by-weight}
Let $Q$ be a quiver, $I$ an ideal of the category $\k Q$ contained in $\k Q^+$,
and $W$ a homogeneous $G$-weight on $\k(Q, I)$.
Define a quiver
$$
Q_{G,W} = ((Q_{G,W})_0, (Q_{G,W})_1, s_{G,W}, t_{G,W})
$$
and an ideal $I_{G,W}$ of $\k Q_{G,W}$ as follows.
$$
\begin{aligned}
(Q_{G,W})_0&:= \{x^{(a)}:= (x, a) \mid x \in Q_0, a \in G\}=Q_0 \times G\\
(Q_{G,W})_1&:= \{\al^{(a)}\colon x^{(a)} \to y^{(W(\al)a)} \mid \al \colon x \to y \text{ in } Q_1, a \in G\}\\
(\text{thus }s_{G,W}(\al^{(a)})&:= x^{(a)},\ \ t_{G,W}(\al^{(a)}):= y^{(W(\al)a)} \text{ for }\al \colon x \to y \text{ in } Q_1, a \in G),\\
I_{G,W}&:=\ang{\ro^{(a)} \mid a \in G, 
\ro \text{ is a minimal relation in }I},
\end{aligned}
$$
where for each path $\mu= \al_n\cdots \al_1$ of length $n \ge 2$ and $a \in G$, we set $\mu^{(a)}$ to be the path
$$
\mu^{(a)}:=\al_n^{(a_{n-1}\cdots a_1 a)}\cdots\al_2^{(a_1 a)}\al_1^{(a)}
$$
from $x^{(a)}$ to $y^{(W(\mu) a)}$
with $a_i:= W(\al_i)$ for all $i=1, \dots, n$ and $W(\mu):= W(\al_n)\cdots W(\al_1)$,
and for each element $\ro = \sum_{i}k_i\mu_i \ (k_i \in \k, \mu_i\text{: paths})$ of $I(x, y)\ (x, y \in Q_0)$ we set
$$
\ro^{(a)}:= \sum_{i}k_i\mu_i^{(a)}.
$$
Then $\k(Q_{G,W}, I_{G,W})$ is a $\k$-category with a right $G$-action $X$ defined by
the quiver morphism
$$
X_c\colon (x^{(a)} \ya{\al^{(a)}}y^{(b)}) \mapsto (x^{(ac)}\ya{\al^{(ac)}}y^{(bc)})
$$
for all $x^{(a)} \ya{\al^{(a)}}y^{(b)}$ in $(Q_{G,W})_1$
($a, c \in G, x, y\in Q_0, \al \in Q_1, b= W(\al)a$).
We 
call $(Q_{G,W}, I_{G,W})$ the {\em smash product} of $(Q, I, W)$ and $G$.
\end{dfn}

\begin{rmk}
\label{rmk:weight}
For $b \in G$ and $\al \colon x \to y$ in $Q_1$ we have an arrow
$\al^{(W(\al)\inv b)}\colon x^{(W(\al)\inv b)} \to y^{(b)}$
in $Q_{G,W}$.
\end{rmk}

Here we recall the definitions of coverings of quivers and of bound quivers.

\begin{dfn}
\label{dfn:covering}
Let $Q = (Q_0, Q_1, s, t)$ be a quiver.
\par
(1) For each $x \in Q_0$ we set
$$
\begin{aligned}
x^+&:= \{ \al \in Q_1 \mid s(\al) = x\}\ \text{and}\\
x^-&:= \{ \al \in Q_1 \mid t(\al) = x\}.
\end{aligned}
$$
\par
(2) Paths $\mu_1, \mu_2, \dots, \mu_n\  (n \ge 2)$ in $Q$ are said to be {\em parallel}
if $s(\mu_i) = s(\mu_1)$ and $t(\mu_i) = t(\mu_1)$ for all $i = 1, 2, \dots, n$.

\medskip

 Let $Q' = (Q'_0, Q'_1, s', t')$ be another quiver and
 $F \colon Q \to Q'$ a morphism of quivers.

\medskip

(3) $F$ is called a {\em covering} of quivers if it is surjective on the vertices and induces bijections
$$
x^+ \to (Fx)^+\quad\text{and}\quad x^- \to (Fx)^-
$$
for all $x \in Q_0$.
Moreover, a covering $F$ of quivers is called {\em regular} if
$F_*(\pi_1(Q, x_0))$ is a normal subgroup of $\pi_1(Q', F(x_0))$,
where $\pi_1(R, x)$ is the fundamental group of a quiver $R$ with a base point $x \in R_0$,
and $F_* \colon \pi_1(Q, x_0) \to \pi_1(Q', F(x_0))$ is the map canonically induced from $F$.
\par
(4) A map $L\colon Q'_0 \to Q_0$ is called a {\em lifting} of $F$ if $FL = \id_{Q'_0}$.

(5) By definition of a covering of quivers note that if $L$ is a lifting of a covering $F$,
then for any path (or even any walk) $\mu = \be_n\cdots \be_2\be_1$ in $Q'$
there exists a unique path $\la = \al_n \dots\al_2\al_1$ in $Q$
such that $s(\la) = L(s'(\mu))$
and $F(\la) = \mu$,
where we set $F(\la):= F(\al_n)\cdots F(\al_2)F(\al_1)$.
We then set $L(\mu):= \la$.
For each $x \in Q'_0$ the linearlization $\k Q'(x, \blank) \to \k Q(Lx, \blank)$
of $L$ is denoted also by $L$.
\par
(6) Let $(Q, I)$ and $(Q', I')$ be bound quivers and $F\colon (Q, I) \to (Q', I')$
a morphism of bound quivers.
Then $F$ is called a {\em covering} of bound quivers if
$F \colon Q \to Q'$ is a regular covering of quivers and the following are satisfied:
\begin{enumerate}
\item[(a)]
For each minimal relation $\ro = \sum_{i=1}^n k_i \mu_i$ in $I'$
($k_i \in \k$, $\mu_i$ are parallel paths in $Q'$) and each lifting $L$ of $F$
all paths $L(\mu_i)$ are parallel in $Q$; and
\item[(b)]
$I = \{ L(\ro) \mid \ro \in I', L \text{ is a lifting of }F\}$.
\end{enumerate}
\end{dfn}

\begin{rmk}
In Definition \ref{dfn:covering}(3) note that $x^+$ and $x^-$ are subsets of arrows,
which is different from the usual definition in \cite{Rie80}.
This is because we allow quivers to have multiple arrows here.
\end{rmk}

\begin{dfn}
Let $(Q, I)$ be a bound quiver with $Q = (Q_0, Q_1, s, t)$.

(1) We denote by $\Aut (Q, I)$ the group of automorphisms of the bound quiver $(Q, I)$.

(2) A (right) $G$-{\em action} on $(Q, I)$ is a group homomorphism $X\colon G\op \to \Aut(Q,I)$.
We denote $X(a)x$ simply by $xa$ for all $a \in G$ and $x \in Q_0 \cup Q_1$
if there seems to be no confusion.
We also set $xG:= \{xa \mid a \in G\}$ for all $x \in Q_0 \cup Q_1$.

Let $X$ be a $G$-action on $(Q, I)$.

(3) The orbit quiver $Q/G$ is the quiver $(Q_0/G, Q_1/G, \ovl{s}, \ovl{t})$,
where $Q_i/G:= \{xG \mid x \in Q_i\}\ (i = 0, 1)$ and for each $r \in \{s, t\}$,
$\ovl{r}$ is the map $Q_1/G \to Q_0/G$ defined by
$\ovl{r}(\al G):= r(\al)G$\ ($\al \in Q_1$), which
is well-defined because $r$ is commutative with $X(a) \in \Aut(Q, I)$ for all $a \in G$.

(4) A quiver morphism $\pi \colon Q \to Q/G$ is defined by
$x \mapsto xG,\ (x \in Q_0 \cup Q_1)$, which is called the {\em canonical morphism}.

(5) $X$ is said to be {\em admissible} if
for each $x \in Q_0$,
$\al \ne \be$ implies $G\al \ne G\be$ for all $\al, \be \in x^+$ and for all $\al, \be \in x^-$,
or equivalently,
for each $p \in \{+, -\}$,
the $G$-orbit of each arrow intersects with $x^p$ at most once.

(6) $X$ is said to be {\em free} if 
$xa \ne x$ for all $1 \ne a \in G$ and $x \in Q_0$.
\end{dfn}

\begin{lem}\label{lem:adm-free}
Let $(Q, I)$ be a bound quiver with a $G$-action.
Then the following hold.
\begin{enumerate}
\item
The canonical morphism $\pi\colon Q \to Q/G$ turns out to be a covering of quivers if and only if
the $G$-action is admissible.
If this is the case, then
\begin{enumerate}
\item
the canonical morphism $\pi\colon Q \to Q/G$ turns out to be
a regular covering morphism; and
\item
$\pi$ induces the canonical
covering morphism $\ovl{\pi} \colon (Q, I) \to (Q, I)/G$ of bound quivers, where
we set $(Q, I)/G:= (Q/G, (\k \pi)(I))$.
\end{enumerate}
\item
Assume that the $G$-action on $(Q, I)$ is admissible and free.
Then
$\k \pi\colon Q \to Q/G$ induces a Galois covering functor
$\k\ovl{\pi} \colon \k(Q, I) \to \k((Q, I)/G)$ with group $G$.
\end{enumerate}
\end{lem}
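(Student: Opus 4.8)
The plan is to prove the equivalence in (1) first, deduce (1)(a) and (1)(b), and finally establish (2), where freeness enters. By construction $\pi$ is surjective on vertices, so the only point for (1) is whether it induces bijections $x^+ \to (xG)^+$ and $x^- \to (xG)^-$ for every $x \in Q_0$. The map $\pi|_{x^+}\colon \al \mapsto \al G$ does land in $(xG)^+$, since $\ovl s(\al G) = s(\al)G = xG$; moreover it is \emph{always} surjective, because if $\ga G \in (xG)^+$ then $s(\ga) \in xG$, say $s(\ga) = xa$, and then $\ga a\inv$ lies in $x^+$ with $\ga a\inv \in \ga G$, using that each $X(c)$ is a quiver automorphism and hence commutes with $s$ and $t$. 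Thus $\pi|_{x^+}$ is bijective if and only if it is injective, that is, if and only if distinct arrows of $x^+$ lie in distinct $G$-orbits; arguing identically for $x^-$, we conclude that $\pi$ is a covering of quivers exactly when this holds for every $x$ and both signs, which is precisely the definition of an admissible action. This proves the first assertion of (1).

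Assume from now on that the action is admissible. For (1)(a), each $X(a)$ is a deck transformation of $\pi$ (indeed $\pi\circ X(a)=\pi$), and $G$ acts transitively on every fibre of $\pi$, the fibre over $xG$ being the orbit $xG$ itself; by the standard characterization of regular coverings (a covering whose deck group acts transitively on fibres is regular) this gives (a). For (1)(b), the action is by automorphisms of $(Q,I)$, so $I$ is $G$-stable; the unique lifting property of $\pi$ yields isomorphisms $\bigoplus_{b\in G}\k Q(x,yb) \to \k(Q/G)(\ovl x,\ovl y)$ for fixed representatives $x,y$, and using the $G$-stability of $I$ one checks that $(\k\pi)(I)$, with $(\k\pi)(I)(\ovl x,\ovl y)=\sum_{b\in G}(\k\pi)(I(x,yb))$, is a well-defined pre-admissible ideal of $\k(Q/G)$, local finiteness being inherited from $Q$. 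It then remains to verify conditions (a) and (b) of Definition~\ref{dfn:covering}(6) for $\ovl\pi\colon (Q,I)\to (Q/G,(\k\pi)(I))$: parallelism of the lifts $L(\mu_i)$ of a minimal relation follows from uniqueness of liftings together with the $G$-stability of $I$, and the identity $I=\{L(\ro)\mid \ro\in (\k\pi)(I),\ L\text{ a lifting}\}$ is obtained by matching minimal relations of $I$ with those of $(\k\pi)(I)$ under the lifting bijection.

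For (2), freeness of the action on $Q$ gives a free $G$-action on $\k(Q,I)$, so the orbit category $\k(Q,I)/G$ and the canonical functor $F\colon \k(Q,I)\to \k(Q,I)/G$ are defined as in \cite{Ga81}. By the definition of a Galois covering it suffices to produce an isomorphism $H\colon \k(Q,I)/G \to \k((Q,I)/G)$ with $\k\ovl\pi=HF$. Take $H$ to be the identity on objects (both object sets are $Q_0/G$); on morphisms, the isomorphisms $\k(Q/G)(\ovl x,\ovl y)\iso \bigoplus_{b}\k Q(x,yb)$ of the previous paragraph carry $(\k\pi)(I)$ onto $\bigoplus_b I(x,yb)$, hence descend to $\k((Q,I)/G)(\ovl x,\ovl y)\iso \bigoplus_b \k(Q,I)(x,yb)=(\k(Q,I)/G)(\ovl x,\ovl y)$, where freeness is what identifies the right-hand side with the correct orbit-category hom-space. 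Checking that $H$ respects composition and the $G$-actions and that $\k\ovl\pi=HF$ is then routine.

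I expect the main obstacle to be condition (b) of Definition~\ref{dfn:covering}(6) in part (1)(b): relating the minimal relations of $I$ upstairs to those of the push-down $(\k\pi)(I)$ downstairs has to be carried out through the non-canonical choices of liftings and relies essentially on the $G$-stability of $I$. The same morphism-space correspondence is exactly what makes the isomorphism $H$ in (2) well defined, so getting that bookkeeping right is the real crux of the whole lemma.
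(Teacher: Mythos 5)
Your proposal is correct, and for part (1) it does in detail exactly what the paper leaves implicit: the paper's proof of (1) is literally ``this immediately follows by Definition~\ref{dfn:covering}(3); it is straightforward to check (a) and (b)'', so your surjectivity/injectivity analysis of $\pi|_{x^+}$ and your deck-group argument for regularity are the intended filling-in. Where you genuinely diverge is part (2). The paper never constructs an isomorphism $H\colon \k(Q,I)/G \to \k((Q,I)/G)$; it first notes that $\k\ovl{\pi}$ is a covering functor and then invokes the criterion of \cite[3.1 Remark]{Ga81}: a covering functor satisfying $\k\ovl{\pi}\cdot X(a) = \k\ovl{\pi}$ for all $a\in G$, surjective on objects, such that $G$ acts transitively on each fiber $\pi\inv(xG)=xG$, is a Galois covering with group $G$. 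Your route instead goes back to Definition~\ref{dfn:Galois-cov-mor}-style isomorphism with the orbit category and builds $H$ by hand from the hom-space decompositions $\k((Q,I)/G)(\ovl{x},\ovl{y}) \iso \bigoplus_{b} \k(Q,I)(x,yb) \iso (\k(Q,I)/G)(\ovl{x},\ovl{y})$. The two proofs share their real content---the ``covering functor'' check in the paper is the same bookkeeping as your middle isomorphism---but citing Gabriel's criterion spares one from verifying that $H$ respects composition in the orbit category, which is the fiddliest step of your version; conversely, your version is self-contained and makes explicit where freeness enters (namely in identifying the orbit-category hom-spaces with $\bigoplus_b \k(Q,I)(x,yb)$).

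One small correction to your sketch of (1)(b): parallelism of the lifts $L(\mu_i)$ of a minimal relation does not follow from unique lifting and $G$-stability alone. The argument that works is to decompose $L(\ro)$ according to the targets of the lifted paths, note that each piece lies in $I$ (since the lift of $\ro$ lies in $\bigoplus_b I(x,yb)$) and hence pushes down to a subsum of $\ro$ lying in $(\k\pi)(I)$, and then invoke \emph{minimality} of $\ro$ to conclude that all terms sit in a single piece. Minimality, not lifting bookkeeping, is what carries that step; with it, your condition (b) also falls out, since $L_x\circ\k\pi$ is the identity on $\bigoplus_b \k Q(x,yb)$.
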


\begin{proof}
(1) This immediately follows by Definition \ref{dfn:covering}(3).
It is straightforward to check (a) and (b).

(2) It is easy to see that
$\k\ovl{\pi}$ 
is a covering functor.
Finally,  $\k\ovl{\pi}$ is a Galois covering functor with group $G$ because
we have $\k\ovl{\pi} \cdot X(a) = \k\ovl{\pi}$ for all $a \in G$, $\k\ovl{\pi}$ is surjective on the objects, and
$G$ acts transitively on the fibers $xG = \pi\inv(xG)$ for all $x \in Q_0$
(see \cite[3.1 Remark]{Ga81}).
\end{proof}

\begin{dfn}\label{dfn:Galois-cov-mor}
A bound quiver morphism $E \colon (Q, I) \to (R, J)$ is called a {\em Galois covering} morphism with group $G$
if it is isomorphic to the canonical covering $\ovl{\pi}\colon (Q, I) \to (Q, I)/G$ given by an admissible and free
$G$-action on $(Q, I)$,
namely if there exists an isomorphism $H\colon (Q, I)/G \to (R, J)$ such that
$E = H\ovl{\pi}$.
\end{dfn}

\begin{lem}
Let $E \colon (Q, I) \to (R, J)$ be a Galois covering morphism with group $G$
between bound quivers.
Then the induced functor $\k E \colon \k(Q, I) \to \k(R, J)$ is a Galois covering with group $G$.
\end{lem}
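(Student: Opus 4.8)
The plan is to reduce everything to Lemma~\ref{lem:adm-free}(2) by exploiting the functoriality of $\k$-linearization and the stability of the Galois property under isomorphism. First I would unwind the hypothesis via Definition~\ref{dfn:Galois-cov-mor}: since $E$ is a Galois covering morphism with group $G$, there exist an admissible and free $G$-action on $(Q, I)$ and an isomorphism of bound quivers $H\colon (Q, I)/G \to (R, J)$ such that $E = H\ovl{\pi}$, where $\ovl{\pi}\colon (Q, I) \to (Q, I)/G$ is the canonical covering morphism attached to that action. Because the assignment $(Q, I) \mapsto \k(Q, I)$, $F \mapsto \k F$ is a functor from bound quivers to $\k$-categories, it preserves composition and identities, so applying it to $E = H\ovl{\pi}$ yields $\k E = (\k H)(\k\ovl{\pi})$.

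Next I would invoke Lemma~\ref{lem:adm-free}(2): the admissibility and freeness of the $G$-action guarantee that $\k\ovl{\pi}\colon \k(Q, I) \to \k((Q, I)/G)$ is already a Galois covering functor with group $G$. By the definition recalled in the introduction, this means there is a free $G$-action on $\k(Q, I)$ whose canonical orbit functor $F_0\colon \k(Q, I) \to \k(Q, I)/G$ fits into $\k\ovl{\pi} = H_0 F_0$ for some isomorphism $H_0\colon \k(Q, I)/G \to \k((Q, I)/G)$. Since $H$ is an isomorphism of bound quivers, $\k H\colon \k((Q, I)/G) \to \k(R, J)$ is an isomorphism of $\k$-categories. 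Substituting, $\k E = (\k H) H_0 F_0 = \bigl((\k H) H_0\bigr) F_0$, and $(\k H) H_0$ is an isomorphism as a composite of isomorphisms. Hence $\k E$ is isomorphic to the canonical orbit functor $F_0$ of the free $G$-action on $\k(Q, I)$, which is exactly the definition of $\k E$ being a Galois covering functor with group $G$.

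The content of the argument is essentially bookkeeping, and the only points needing care are the two structural facts used above: that $\k$-linearization is a genuine functor (hence respects the factorization $E = H\ovl{\pi}$) and that it carries isomorphisms of bound quivers to isomorphisms of $\k$-categories. Both are routine, the latter because an inverse bound quiver morphism $H\inv$ linearizes to a two-sided inverse $\k(H\inv)$ of $\k H$. The one place I would double-check is that the free $G$-action on $(Q, I)$ really induces, through $\k(-)$, the same free $G$-action on $\k(Q, I)$ used implicitly by Lemma~\ref{lem:adm-free}(2); this holds because the $G$-action is determined on objects alone and $\k$-linearization does not alter the object set. Thus the genuinely substantive work has been discharged by Lemma~\ref{lem:adm-free}(2), and the present lemma follows by transporting that conclusion across the isomorphism $\k H$.
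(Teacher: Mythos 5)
Your proposal is correct and follows essentially the same route as the paper: factor $E = H\ovl{\pi}$ via Definition \ref{dfn:Galois-cov-mor}, apply $\k$-linearization to get $\k E = \k H \cdot \k\ovl{\pi}$, invoke Lemma \ref{lem:adm-free}(2) for $\k\ovl{\pi}$, and absorb the isomorphism $\k H$. The only difference is that you spell out explicitly the bookkeeping (unwinding the Galois property of $\k\ovl{\pi}$ as $H_0F_0$ and composing isomorphisms) that the paper leaves implicit.
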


\begin{proof}
Take the $G$-action on $\k(Q, I)$ defined by the $G$-action on $(Q, I)$,
and assume that there exists an isomorphism $H\colon (Q, I)/G \to (R, J)$ such that
$E = H\ovl{\pi}$ as in Definition \ref{dfn:Galois-cov-mor}.
Then clearly $\k E = \k H\cdot \k \ovl{\pi}$ and $\k H$ is an isomorphism,
and here $\k \ovl{\pi}$ is a Galois covering with group $G$ by Lemma \ref{lem:adm-free}(2).
\end{proof}

\begin{prp}
\label{prp:can-cover}
The morphism of bound quivers $F_{G,W} \colon (Q_{G,W}, I_{G,W}) \to (Q,I)$ defined by
$F_{G,W}(x^{(a)}):= x, F_{G,W}(\al^{(a)}):= \al$ $(x \in Q_0, \al \in Q_1, a \in G)$ is
a Galois covering with group $G$.
\end{prp}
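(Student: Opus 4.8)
The plan is to realize $F_{G,W}$ as the canonical covering attached to the right $G$-action $X$ on $(Q_{G,W}, I_{G,W})$ recorded in Definition~\ref{dfn:cov-by-weight}, and then to invoke Lemma~\ref{lem:adm-free} together with Definition~\ref{dfn:Galois-cov-mor}. First I would check that $X$ is \emph{free} and \emph{admissible}. Freeness is immediate, since $x^{(a)}\cdot c = x^{(ac)}$ equals $x^{(a)}$ only when $c=1$. For admissibility, observe that the arrows of $Q_{G,W}$ out of $x^{(a)}$ are exactly the $\al^{(a)}$ with $\al \in x^+$, and the $X$-orbit of $\al^{(a)}$ is $\{\al^{(ac)} \mid c \in G\}$, all of whose members have the same underlying arrow $\al$ of $Q$; hence distinct arrows in $(x^{(a)})^+$ lie in distinct orbits, and dually for $(x^{(a)})^-$ (using Remark~\ref{rmk:weight}). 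By Lemma~\ref{lem:adm-free} the canonical morphism $\ovl{\pi}\colon (Q_{G,W}, I_{G,W}) \to (Q_{G,W}, I_{G,W})/G$ is then a Galois covering with group $G$, where $(Q_{G,W}, I_{G,W})/G = (Q_{G,W}/G,\ (\k\pi)(I_{G,W}))$.

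Next I would build an isomorphism $H\colon (Q_{G,W}, I_{G,W})/G \to (Q, I)$ with $F_{G,W} = H\ovl{\pi}$. Since right translation is transitive on $G$, the $X$-orbit of $x^{(a)}$ is $\{x^{(b)} \mid b \in G\}$ and that of $\al^{(a)}$ is $\{\al^{(b)} \mid b \in G\}$; thus the assignments $x^{(a)}G \mapsto x$ and $\al^{(a)}G \mapsto \al$ are well defined, compatible with source and target, and bijective, so they define a quiver isomorphism $H\colon Q_{G,W}/G \isoto Q$. By construction $H\ovl{\pi}$ and $F_{G,W}$ agree on all vertices and arrows, so $\k H\cdot \k\pi = \k F_{G,W}$, and it remains only to verify that $H$ is an isomorphism of \emph{bound} quivers, i.e.\ that $\k H$ carries $(\k\pi)(I_{G,W})$ onto $I$; since $\k H\cdot\k\pi = \k F_{G,W}$ this amounts to the single identity $\k F_{G,W}(I_{G,W}) = I$.

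The inclusion $\k F_{G,W}(I_{G,W}) \le I$ is routine: a typical generator $p\,\ro^{(a)}q$ of $I_{G,W}$ maps under $\k F_{G,W}$ to $F_{G,W}(p)\,\ro\,F_{G,W}(q) \in I$. The reverse inclusion is the heart of the argument, and this is where homogeneity of $W$ enters. Writing an arbitrary element of $I$ as a combination of terms $\mu\,\ro\,\nu$ with $\ro$ a minimal relation and $\mu,\nu$ paths of $Q$, I would lift each factor along the covering: choose $a \in G$, form $\nu^{(a)}$, then $\ro^{(b)}$ with $b = W(\nu)a$, then $\mu^{(c)}$ with $c = W(\ro)b$. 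The crucial point is that homogeneity forces all paths occurring in the minimal relation $\ro$ to share one weight, so $\ro^{(b)}$ is a genuine element of $\k Q_{G,W}$, namely a combination of \emph{parallel} lifted paths; the three lifts are composable by the target/source bookkeeping of Definition~\ref{dfn:cov-by-weight}, the product $\mu^{(c)}\ro^{(b)}\nu^{(a)}$ lies in $I_{G,W}$, and $\k F_{G,W}$ sends it back to $\mu\,\ro\,\nu$. Summing gives a preimage in $I_{G,W}$, whence $I \le \k F_{G,W}(I_{G,W})$ and so equality holds.

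With the identity $\k F_{G,W}(I_{G,W}) = I$ established, $H$ is an isomorphism of bound quivers satisfying $F_{G,W} = H\ovl{\pi}$, so $F_{G,W}$ is a Galois covering with group $G$ by Definition~\ref{dfn:Galois-cov-mor}. The main obstacle is precisely this ideal identity, and more specifically the reverse inclusion: one must be sure that minimal relations lift to honest (parallel) relations upstairs, which is exactly the role played by the homogeneity of the weight $W$.
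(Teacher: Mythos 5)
Your proof is correct and follows essentially the same route as the paper's: identify $Q_{G,W}/G$ with $Q$ via $x^{(a)}G \mapsto x$, $\al^{(a)}G \mapsto \al$, check $F_{G,W} = H\ovl{\pi}$, and reduce everything to the ideal identity $\k F_{G,W}(I_{G,W}) = I$, whose reverse inclusion rests on decomposing elements of $I$ into minimal relations and using homogeneity of $W$ so that their lifts are parallel and hence lie in $I_{G,W}$. The only (harmless) differences are that you verify admissibility and freeness of the $G$-action explicitly, which the paper leaves implicit, and that you lift generator products $\mu\,\ro\,\nu$ termwise, whereas the paper packages the same computation as the identity $I_{G,W} = \{L(\ro) \mid \ro \in I,\ L \text{ a lifting of } F_{G,W}\}$.
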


\begin{proof}
Let $x, y \in Q_0$ (resp.\ $x, y \in Q_1$) and $a, b \in G$.
Then $x^{(a)}G = y^{(b)}$ if and only if
$y^{(b)} = x^{(a)}\cdot c$ for some $c \in G$
if and only if $y = x$ and $b = ac$ for some $c \in G$
if and only if $y = x$.
This means that the correspondence
$x^{(a)}G \mapsto x$ defines a map $H_0 \colon (Q_{G,W}/G)_0 \to Q_0$
(resp.\  $H_1\colon (Q_{G,W}/G)_1 \to Q_1$) and that both are injective.
Obviously these maps are surjective.
Hence $H:= (H_0, H_1) \colon Q_{G,W}/G  \to Q$ is an isomorphism of quivers.
A direct calculation shows that $F_{G,W} = H \pi$.

It remains to show that $\k H(\k \pi(I_{G,W})) = I$, or equivalently that
$\k F_{G,W}(I_{G,W}) = I$.
To this end it is enough to show the following:
$$
I_{G,W} = \{ L(\ro) \mid \ro \in I, L \text{ is a lifting of }F\}.
$$
($\subseteq$). As is easily seen the right hand side is an ideal of $\k Q_{G,W}$.
For each $a \in G$ define a map $L_a\colon Q_0 \to (Q_{G,W})_0$ by
$L_a(x):= x^{(a)}\ (x \in Q_0)$.
Then $L_a$ is a lifting of $F$, and $L_a(\mu) = \mu^{(a)}$ for all $a \in G$
and path $\mu$ in $Q$.
Hence the generating set of $I_{G,W}$ is included in the right hand side,
which shows the inclusion $(\subseteq)$.
\\
($\supseteq$).
Let $x, y \in Q_0$, $\ro \in I(x, y)$ and $L$ be a lifting of $F$.
Then $\ro = \ro_1 +\cdots + \ro_t$ for some minimal relations
$\ro_1,\dots, \ro_t \in I(x, y)$.
There exists an $a \in G$ such that $L(x) = x^{(a)}$.
With this $a$ we have
$L(\ro) = \ro^{(a)} = \ro_1^{(a)}+\cdots + \ro_t^{(a)} \in I_{G,W}$. 
\end{proof}

\begin{thm}
\label{thm:quiv-pres-sm-prod}
Let $(Q, I)$ be a bound quiver and $W$ a homogeneous $G$-weight on $\k(Q, I)$.
Then the smash product $\k(Q, I, W)\# G$ is presented by the bound quiver
$(Q_{G,W},I_{G,W})$, i.e., we have an isomorphism
$$
\k(Q, I, W)\# G\iso \k(Q_{G,W},I_{G,W})
$$
as $\k$-categories with right $G$-actions.
\end{thm}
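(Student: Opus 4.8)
The plan is to build the isomorphism first at the level of the (ungraded) free path categories and then descend to the quotients, using \lemref{lem:factor-alg-smash}. Write $A:=\k(Q,W)=\k Q$ with its $G$-grading defined by $W$, so that $A^{ba\inv}(x,y)=(A\#G)(x^{(a)},y^{(b)})$. Since $\k Q_{G,W}$ is the free $\k$-linear category on the quiver $Q_{G,W}$, I can define a functor $\Psi\colon \k Q_{G,W}\to A\# G$ by prescribing it only on objects and arrows: set $\Psi(x^{(a)}):=x^{(a)}$ and send the arrow $\al^{(a)}\colon x^{(a)}\to y^{(W(\al)a)}$ to ${}_{y^{(W(\al)a)}}\al_{x^{(a)}}\in (A\# G)(x^{(a)},y^{(W(\al)a)})=A^{W(\al)}(x,y)$, which is legitimate because $(W(\al)a)a\inv=W(\al)$. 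No relations need checking since the source is free.

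Next I would record two facts. First, the composition rule of $A\# G$ merely composes the underlying morphisms of $A$, so functoriality yields $\Psi(\mu^{(a)})={}_{y^{(W(\mu)a)}}\mu_{x^{(a)}}$ for every path $\mu=\al_n\cdots\al_1$ of $Q$ and every $a\in G$, and hence $\Psi(\ro^{(a)})={}_{\bullet}\ro_{\bullet}$ for every relation $\ro$. Second, $\Psi$ is an isomorphism of categories: it is the identity on objects, and it is bijective on hom-spaces because, by the construction of $Q_{G,W}$ (cf.\ Definition~\ref{dfn:covering}(5)), every path of $Q_{G,W}$ starting at $x^{(a)}$ is $\mu^{(a)}$ for a unique path $\mu$ of $Q$ starting at $x$, with terminus $y^{(W(\mu)a)}$. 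Thus the paths $\mu^{(a)}\colon x^{(a)}\to y^{(b)}$ biject with the paths $\mu\colon x\to y$ of weight $ba\inv$, and $\Psi$ carries this basis of $\k Q_{G,W}(x^{(a)},y^{(b)})$ bijectively onto the basis $\{\mu\mid W(\mu)=ba\inv\}$ of $A^{ba\inv}(x,y)=(A\# G)(x^{(a)},y^{(b)})$.

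The heart of the proof is the equality $\Psi(I_{G,W})=I\# G$, with $I\# G$ the ideal of \lemref{lem:factor-alg-smash}. For the inclusion $\subseteq$, homogeneity of $W$ guarantees that each minimal relation $\ro$ of $I$ is homogeneous of some weight $d$ (Definition~\ref{dfn:min-rel-hmg}), so $\Psi(\ro^{(a)})={}_{y^{(da)}}\ro_{x^{(a)}}\in I^{d}(x,y)=(I\# G)(x^{(a)},y^{(da)})$; as $\Psi$ is a functor and $I\# G$ an ideal, the ideal generated by the $\ro^{(a)}$ lands in $I\# G$. For $\supseteq$, fix $x^{(a)},y^{(b)}$ and take $\ro\in I^{ba\inv}(x,y)=(I\# G)(x^{(a)},y^{(b)})$ (as $I$ is homogeneous, $I\# G$ is concentrated in these pieces). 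Decomposing $\ro=\ro_1+\cdots+\ro_t$ into minimal relations (the standard induction on the number of terms, already used in the proof of \prpref{prp:can-cover}), each $\ro_j$ is supported on a subset of the parallel paths of $\ro$, all of weight $ba\inv$, hence is itself homogeneous of weight $ba\inv$; therefore the lifts $\ro_j^{(a)}$ are parallel arrows $x^{(a)}\to y^{(b)}$ belonging to $I_{G,W}$, so $\ro^{(a)}:=\sum_j\ro_j^{(a)}\in I_{G,W}(x^{(a)},y^{(b)})$ and $\Psi(\ro^{(a)})$ recovers $\ro$.

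Finally, being an isomorphism of categories with $\Psi(I_{G,W})=I\# G$, the functor $\Psi$ descends to an isomorphism $\k(Q_{G,W},I_{G,W})=\k Q_{G,W}/I_{G,W}\iso (A\# G)/(I\# G)$, which by \lemref{lem:factor-alg-smash} equals $(A/I)\# G=\k(Q,I,W)\# G$. For $G$-equivariance, comparing the action $X_c(\al^{(a)})=\al^{(ac)}$ on $\k Q_{G,W}$ with the right action on $A\# G$ gives $\Psi(X_c(\al^{(a)}))={}_{y^{(W(\al)ac)}}\al_{x^{(ac)}}=\Psi(\al^{(a)})\cdot c$ on generators, so $\Psi$ is $G$-equivariant, and \lemref{lem:factor-alg-smash} identifies the target as a category with right $G$-action; hence the induced isomorphism respects the $G$-actions. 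The single delicate point is the equality $\Psi(I_{G,W})=I\# G$: both the homogeneity of minimal relations (which makes the $\ro_j^{(a)}$ parallel, so that $I_{G,W}$ is well-defined and large enough) and the decomposition of an arbitrary homogeneous relation into minimal ones are precisely where the hypothesis that $W$ is homogeneous enters.
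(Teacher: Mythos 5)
Your proof is correct, and in substance it is the paper's proof run in the opposite direction: both arguments rest on exactly the same three ingredients, namely \lemref{lem:factor-alg-smash}, the bijection between paths of $Q_{G,W}$ issuing from $x^{(a)}$ and paths of $Q$ issuing from $x$ (unique lifting), and the decomposition of homogeneous elements of $I$ into minimal relations, which is precisely where homogeneity of $W$ enters. The difference is direction and packaging. The paper defines $\ph\colon \k(Q,W)\# G \to \k Q_{G,W}$, must verify by hand that $\ph$ is a $\k$-functor, establishes only the inclusion $\ph(I\#G)\subseteq I_{G,W}$ so as to obtain an induced functor $\ovl{\ph}$ on the quotients, and then proves $\ovl{\ph}$ fully faithful by a 5-lemma argument comparing two exact sequences of hom-spaces (which requires showing separately that $\ph$ and $\ph|I\# G$ are isomorphisms). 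You instead define $\Psi\colon \k Q_{G,W}\to \k(Q,W)\# G$, and because the source is the free $\k$-category on the quiver $Q_{G,W}$, functoriality costs nothing: you only prescribe images of objects and arrows. You then prove the single equality $\Psi(I_{G,W})=I\#G$ (both inclusions) and descend; this replaces the 5-lemma by the elementary fact that an isomorphism carrying one ideal onto another induces an isomorphism of the quotients. What the paper's direction buys is that only one inclusion is needed before the quotient functor exists, and surjectivity of $\ph|I\#G$ is checked on generators of $I_{G,W}$ only; what your direction buys is the universal property of the free category and a cleaner descent. The one citation worth tightening: your appeal to Definition~\ref{dfn:covering}(5) for unique lifting presupposes that $F_{G,W}$ is a covering, which is \prpref{prp:can-cover}; alternatively, the claim that every path of $Q_{G,W}$ out of $x^{(a)}$ is $\mu^{(a)}$ for a unique path $\mu$ of $Q$ out of $x$ follows directly from the definition of $(Q_{G,W})_1$ by induction on path length. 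The computations underneath --- $\Psi(\mu^{(a)})=\mu$ versus $\ph(\mu)=\mu^{(a)}$, and the minimal-relation bookkeeping --- are identical, so this is the same proof written backwards, with two genuine economies (no functoriality check, no 5-lemma).
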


\begin{proof}
Note by Lemma \ref{lem:factor-alg-smash} that we have
$$
\k(Q, I, W)\# G = (\k(Q, W)\# G)/(I \# G).
$$
We first define a functor $\ph\colon \k(Q,W)\# G \to \k(Q_{G,W})$.
\\
{\bf On objects.}  For each $x^{(a)} \in (\k(Q,W)\# G)_0\ (x \in Q_0, a\in G)$,
we set
$$
\ph(x^{(a)}):= x^{(a)} \in (Q_{G,W})_0.
$$
\\
{\bf On morphisms.}
Let $x^{(a)}, y^{(b)} \in (\k(Q, W)\# G)_0$ and $f \in (\k(Q,W)\# G)(x^{(a)}, y^{(b)}) = \k(Q,W)^{ba\inv}(x,y)$.
Then $f = \sum_{i=1}^{n}k_i\mu_i$ for some $k_i \in \k, \mu_i \in \bbP_Q(x,y)$
with $W(\mu_i) = ba\inv$.
Here note that each $\mu_i^{(a)}$ is a path from $x^{(a)}$ to $y^{(b)}$ by Remark \ref{rmk:weight}.
Then we set
$$
\ph(f):= \sum_{i=1}^n k_i \mu_i^{(a)}\in \k Q_{G,W}(x^{(a)}, y^{(b)}).
$$
\subsection*{(1) $\ph$ is a $\k$-functor.}
Let $x^{(a)} \in (\k(Q, W)\# G)_0$.
Then
$$
\id_{x^{(a)}}\in (\k(Q, W)\# G)(x^{(a)}, x^{(a)}) = \k(Q, W)^1(x,x),
$$
which is a linear combination of paths from $x$ to $x$,
and hence $\id_{x^{(a)}} = \id_{x^{(a)}}e_x = e_x$.
Therefore $\ph(\id_{x^{(a)}}) = \ph(e_x) = e_x^{(a)} = \id_{x^{(a)}}$ in $\k(Q_{G,W}, I_{G,W})$.

Let $x^{(a)} \ya{f} y^{(b)} \ya{g} z^{(c)}$ in $\k(Q, W) \# G$.
Then $f = \sum_{i=1}^mk_i \la_i$, $g =\sum_{j=1}^n l_j \mu_j$
for some $k_i, l_j \in \k$, $\la_i \in \bbP_Q(x,y), \mu_j \in \bbP_Q(y,z)$
with $W(\la_i) = ba\inv, W(\mu_j) = cb\inv$.
Then it is easy to see that $(\mu_j \la_i)^{(a)} = \mu_j^{(b)}\la_i^{(a)}$
for all $i, j$, which shows
$$
\begin{aligned}
\ph(g\cdot f) &= \ph(\sum_{i,j}(l_j k_i)\mu_j \la_i) =
\sum_{i,j}(l_j k_i)(\mu_j\la_i)^{(a)}\\
&= \sum_{j=1}^nl_j\mu_j^{(b)}\sum_{i=1}^m k_i\la_i^{(a)}
= \ph(g)\cdot \ph(f).
\end{aligned}
$$
It is obvious that $\ph$ is $\k$-linear.

\subsection*{(2) $\ph(I\# G) \subseteq I_{G, W}$}
Thus $\ph$ induces a functor $\ovl{\ph}\colon \k(Q,I,W)\# G \to \k(Q_{G,W}, I_{G,W})$.

Let $x^{[a]}, y^{[b]} \in (A\# G)_0$.
We have only to show that
$\ph((I\# G)(x^{[a]}, y^{[b]})) \subseteq I_{G, W}(x^{[a]}, y^{[b]})$.
Let $f \in  (I\# G)(x^{(a)}, y^{(b)})  = I^{ba\inv}(x,y)$.
Then $f = \sum_{i=1}^m k_i \mu_i$ for some $k_i \in \k$ and parallel paths $\mu_i$
with $W(\mu_i) = ba\inv$, and we have
$f = \ro_1 + \ro_2 +\cdots +\ro_t$ for some minimal relations $\ro_j$ in $I$,
where for each $j = 1, 2,\dots, t$ we have $\ro_j = \sum_{i \in S_j} k_i \mu_i$
for some partition $S_1 \sqcup S_2 \sqcup \dots \sqcup S_t = \{1, 2, \dots, n\}$
of the set $\{1, 2, \dots, n\}$.
Now $\ph(f) = \sum_{i=1}^m k_i \mu_i^{(a)} = \sum_{j=1}^t\sum_{i\in S_j}k_i\mu_i^{(a)}
= \ro_1^{(a)} + \ro_2^{(a)} + \dots + \ro_t^{(a)} \in I_{G, W}$.
\subsection*{(3) $\ovl{\ph}$ is a bijection on the objects.}
This is trivial because $\ph$ is the identity on objects.

\subsection*{(4) $\ovl{\ph}$ commutes with right $G$-actions}
Let $x, y \in Q_0$ and $a,b,c \in G$.
We have to show the commutativity of the diagram
\begin{equation}
\label{eq:right-G-action}
\vcenter{
\xymatrix{
(\k(Q,I,W)\# G)(x^{(a)},y^{(b)}) & \k(Q_{G,W}, I_{G,W})(x^{(a)},y^{(b)})\\
(\k(Q,I,W)\# G)(x^{(ac)},y^{(bc)}) & \k(Q_{G,W}, I_{G,W})(x^{(ac)},y^{(bc)}).
\ar^{\ovl{\ph}}"1,1";"1,2"
\ar_{\ovl{\ph}}"2,1";"2,2"
\ar_{(\blank)^c}"1,1";"2,1"
\ar^{X_c}"1,2";"2,2"
}
}
\end{equation}
It is enough to show the commutativity for each element of the form
$\ovl{\mu}:= \mu + (I\# G)(x^{(a)},y^{(b)})$ for a path $\mu \in \bbP_Q(x, y)$ with $W(\mu) = ba\inv$.
This is verified by the equalities
$$
\ovl{\ph}(({}_{y^{(b)}}\ovl{\mu}_{x^{(a)}})^c)
= \ovl{\ph}({}_{y^{(bc)}}\ovl{\mu}_{x^{(ac)}})
= \widetilde{\mu^{(ac)}}
=X_c(\widetilde{\mu^{(a)}})
=X_c(\ovl{\ph}({}_{y^{(b)}}\ovl{\mu}_{x^{(a)}})),
$$
where $\widetilde{(\blank)}$ stands for the coset in $\k(Q_{G,W}, I_{G,W})$.

\subsection*{(5) $\ovl{\ph}$ is fully faithful}
Let $x^{(a)}, y^{(b)} \in (\k(Q, I, W)\# G)_0$.
Then we have a commutative diagram
$$\footnotesize
\xymatrix{
0 & (I\# G)(x^{(a)}, y^{(b)}) & (\k(Q,W)\# G)(x^{(a)}, y^{(b)}) & (\k(Q,I,W)\# G)(x^{(a)}, y^{(b)}) & 0\\
0 & I_{G,W}(x^{(a)}, y^{(b)}) & \k Q_{G,W}(x^{(a)}, y^{(b)}) & \k(Q_{G,W}, I_{GW})(x^{(a)}, y^{(b)}) & 0
\ar"1,1";"1,2"
\ar@{^{(}->}"1,2";"1,3"
\ar"1,3";"1,4"
\ar"1,4";"1,5"
\ar"2,1";"2,2"
\ar@{^{(}->}"2,2";"2,3"
\ar"2,3";"2,4"
\ar"2,4";"2,5"
\ar"1,2";"2,2"_{\ph|I\# G}
\ar"1,3";"2,3"^\ph
\ar"1,4";"2,4"^{\ovl{\ph}}
}
$$
with exact rows.
Therefore it is enough to show that both $\ph$ and $\ph|I\# G$ above are isomorphisms
by 5-Lemma.

First we show that
$\ph \colon (\k(Q,W)\# G)(x^{(a)}, y^{(b)}) \to \k Q_{G,W}(x^{(a)}, y^{(b)})$ is
an isomorphism.
For each $c \in G$ we set
$\bbP_Q^c(x,y):= \{ \mu \in \bbP_Q(x, y) \mid W(\mu) = c\}$.
Then 
 $(\k(Q,W)\# G)(x^{(a)}, y^{(b)}) = \k(Q, W)^{ba\inv}(x,y)$ has a basis
 $\bbP_Q^{ba\inv}(x, y)$, while the space
 $\k Q_{G,W}(x^{(a)}, y^{(b)})$ has a basis $\bbP_{Q_{G,W}}(x^{(a)}, y^{(b)})$,
and $\ph$ induces a map
 $$
 \ph_0\colon \bbP_Q^{ba\inv}(x, y) \to \bbP_{Q_{G,W}}(x^{(a)}, y^{(b)}),
 \mu \mapsto \mu^{(a)}.
 $$
 Hence it suffices to show that $\ph_0$ is bijective.
 Let $F:=F_{G,W}\colon Q_{G,W} \to Q$ be the covering defined in Proposition \ref{prp:can-cover}.
 This induces a map
 $F_0\colon\bbP_{Q_{G,W}}(x^{(a)}, y^{(b)}) \to \bbP_Q^{ba\inv}(x, y)$.
 We show that $\ph_0$ and $F_0$ are inverses to each other, which will prove that
 $\ph_0$ is bijective.
 For each $\mu \in \bbP_Q^{ba\inv}(x, y)$ we have
 $F_0(\ph_0(\mu)) = F(\mu^{(a)}) = \mu$, which shows that
 $F_0\ph_0 = \id_{\bbP_Q^{ba\inv}(x, y)}$.
 Let $\xi \in \bbP_{Q_{G,W}}(x^{(a)}, y^{(b)})$ and set
 $\mu:= F(\xi)$.
 Then $F(\mu^{(a)}) = \mu = F(\xi)$ and
 $s_{G,W}(\mu^{(a)}) = x^{(a)} = s_{G,W}(\xi)$.
 Therefore by the uniqueness of lifting (see Definition \ref{dfn:covering} (5)) we have $\mu^{(a)} = \xi$,
 and $\ph_0(F_0(\xi)) = \mu^{(a)} = \xi$, which shows that
 $\ph_0 F_0 = \id_{\bbP_{Q_{G,W}}(x^{(a)}, y^{(b)})}$.
 
 Next we show that $\ph|I\# G$ is an isomorphism.
 By the commutativity of the left square $\ph|I\# G$ is injective because so is $\ph$ above.
 Now let $\ro^{(a)} \in I_{G,W}$ with $a \in G$ and $\ro$ a minimal relation in $I(x, y)$
 with $x, y \in Q_0$.
 Then we have $s_{G,W}(\ro^{(a)}) = x^{(a)}$ and $t_{G,W}(\ro^{(a)}) = y^{(b)}$
 for some $b \in G$.
 Set $\ro = \sum_{i=1}^m k_i\mu_i$ for some
 $0 \ne k_i \in \k$ and $\mu_i \in \bbP_Q(x, y)$.
 Here since $W$ is a homogeneous weight, we have $W(\mu_i) = W(\mu_1) = ba\inv$.
 Then $\ro \in I^{ba\inv}(x, y) = (I\# G)(x^{(a)}, y^{(b)})$ and
 $\ro^{(a)} = \ph(\ro) \in \ph((I\# G)(x^{(a)}, y^{(b)}))$.
Therefore $\ph|I\# G$ is surjective, and hence an isomorphism.
 \end{proof}

\begin{rmk}
As explained in the introduction
we can obtain the statement of Theorem \ref{thm:quiv-pres-sm-prod} indirectly by
combining theorems obtained in \cite{Gr83}
under the assumption that $Q_{G,W}$ is connected.
The proof above is a direct one and does not require the connectedness assumption.
\end{rmk}

\begin{exm}
Let $G = \bbZ$ and $(Q, I, W)$ be the bound quiver with weight defined in
Example \ref{exm:br1}.
Then the smash product $\k(Q, I, W)\# G$ is given by the bound quiver
$(Q_{G, W}, I_{G, W})$, where $Q_{G, W}$ is the quiver
$$
\vcenter{\xymatrix@C=60pt{
& 2^{(-1)} & 2^{(0)} & 2^{(1)}\\
\cdots & 1^{(-1)} & 1^{(0)} & 1^{(1)} & \cdots\\
& 3^{(-1)} & 3^{(0)} & 3^{(1)}\\
\ar"1,1";"2,2"^{\al_2^{(-2)}}
\ar"3,1";"2,2"_{\be_2^{(-2)}}
\ar"2,2";"1,2"_{\al_1^{(-1)}}
\ar"2,2";"3,2"^{\be_1^{(-1)}}
\ar"2,3";"1,3"_{\al_1^{(0)}}
\ar"2,3";"3,3"^{\be_1^{(0)}}
\ar"2,4";"1,4"_{\al_1^{(1)}}
\ar"2,4";"3,4"^{\be_1^{(1)}}
\ar"1,2";"2,3"^{\al_2^{(-1)}}
\ar"1,3";"2,4"^{\al_2^{(0)}}
\ar"1,4";"2,5"^{\al_2^{(1)}}
\ar"3,2";"2,3"_{\be_2^{(-1)}}
\ar"3,3";"2,4"_{\be_2^{(0)}}
\ar"3,4";"2,5"_{\be_2^{(1)}}
}}
$$
and
$$
I_{G, W}= \ang{\al_2^{(i)}\al_1^{(i)} - \be_2^{(i)}\be_1^{(i)}, \be_1^{(i+1)}\al_2^{(i)}, \al_1^{(i+1)}\be_2^{(i)},
\al_2^{(i+1)}\al_1^{(i+1)}\al_2^{(i)}, \be_2^{(i+1)}\be_1^{(i+1)}\be_2^{(i)}
\mid i\in \bbZ}.
$$ 
\end{exm}

We have more precise information if we consider the canonical $G$-covering $F\colon A\# G \to A$ stated in Remark \ref{rmk:free-action-can-cov}(2).

\begin{cor}
Let $(Q, I)$ and $W$ be as in Theorem $\ref{thm:quiv-pres-sm-prod}$, and
$F\colon \k(Q, I, W)\# G \to \k(Q, I)$ the canonical $G$-covering.
Then we have a strict commutative diagram
$$
\xymatrix{
\k(Q, I, W)\# G && \k(Q_{G,W}, I_{G,W})\\
 &\k(Q, I)
 \ar"1,1";"1,3"^{\ovl{\ph}}
 \ar"1,1";"2,2"_F
 \ar"1,3";"2,2"^{\k F_{G,W}}
 }
$$
of $\k$-functors.
Therefore we can regard $F_{G,W}$ as a presentation of $F$.
\end{cor}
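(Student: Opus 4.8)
The plan is to verify directly that the two $\k$-functors $F$ and $\k F_{G,W}\circ\ovl{\ph}$ from $\k(Q,I,W)\# G$ to $\k(Q,I)$ coincide on objects and on morphisms; since ``strict commutativity'' here means literal equality of functors, no coherence data need to be produced and the whole argument reduces to unwinding the three explicit definitions. First I would recall that $A:=\k(Q,I,W)$ is, as an ungraded $\k$-category, nothing but $\k(Q,I)$, so that the canonical $G$-covering $F\colon A\# G \to A$ of Remark \ref{rmk:free-action-can-cov}(2) is given on objects by $x^{(a)}\mapsto x$ and on morphisms by the inclusion $(A\# G)(x^{(a)},y^{(b)})=A^{ba\inv}(x,y)\incl A(x,y)=\k(Q,I)(x,y)$.

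On objects the check is immediate: $\ovl{\ph}$ sends $x^{(a)}$ to $x^{(a)}\in(Q_{G,W})_0$, and $\k F_{G,W}$ then sends $x^{(a)}$ back to $x$, so the composite agrees with $F$ on objects.

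For morphisms I would reduce to spanning elements. A morphism $f \in (\k(Q,I,W)\# G)(x^{(a)},y^{(b)}) = \k(Q,I,W)^{ba\inv}(x,y)$ is a $\k$-linear combination of classes of paths $\mu\in\bbP_Q(x,y)$ with $W(\mu)=ba\inv$, so by $\k$-linearity it suffices to treat such a class of a path $\mu$. By the definition of $\ovl{\ph}$ we have $\ovl{\ph}(\mu)=\widetilde{\mu^{(a)}}$, the class of $\mu^{(a)}$ in $\k(Q_{G,W},I_{G,W})$, and by the definition of $F_{G,W}$ on arrows we have $F_{G,W}(\mu^{(a)})=\mu$; this is precisely the identity $F_0\ph_0(\mu)=\mu$ already established in the proof of Theorem \ref{thm:quiv-pres-sm-prod}. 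Hence $\k F_{G,W}(\ovl{\ph}(\mu))$ is the class of $\mu$ in $\k(Q,I)$, which is exactly $F(\mu)$ under the identification above. Extending $\k$-linearly yields $\k F_{G,W}\circ\ovl{\ph}=F$ on all morphisms, giving the asserted strict commutativity.

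There is essentially no genuine obstacle; the only point requiring care is bookkeeping with the identifications, namely that the morphism spaces of $\k(Q,I,W)\# G$ must be read simultaneously as homogeneous components $A^{ba\inv}(x,y)$ (in order to apply $\ovl{\ph}$) and as subspaces of $A(x,y)$ (in order to apply $F$), and that $\k F_{G,W}$ undoes exactly the indexing $\mu\mapsto\mu^{(a)}$ introduced by $\ovl{\ph}$. Once these identifications are made explicit the equality $F=\k F_{G,W}\circ\ovl{\ph}$ is forced. The closing assertion that $F_{G,W}$ may be regarded as a presentation of $F$ then follows at once, since $\ovl{\ph}$ is an isomorphism of $\k$-categories with right $G$-actions by Theorem \ref{thm:quiv-pres-sm-prod}.
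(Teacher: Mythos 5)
Your proof is correct and is exactly the direct verification the paper has in mind: the paper's own proof of this corollary is simply the word ``Straightforward,'' and your unwinding of the definitions (identity on objects, and $\k F_{G,W}(\widetilde{\mu^{(a)}})=\ovl{\mu}=F(\ovl{\mu})$ on spanning path classes) is precisely what that entails. No discrepancy in approach or substance.
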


\begin{proof}
Straightforward.
\end{proof}

\begin{rmk}
By combining with results in \cite{Gr83} we see that
the Galois coverings of a bound quiver $(Q, I)$ with group $G$ constructed in a topological way
explained in the introduction coincide with those having the form
$F_{G,W} \colon (Q_{G,W}, I_{G,W}) \to (Q, I)$ for some $G$-weight $W$.
Therefore the Galois coverings of locally bounded categories
obtained by a topological construction
are covered by the coverings given by
the canonical $G$-coverings of smash products.
\end{rmk}

\section{Brauer graphs}
\label{sec:brauer-graph}

\begin{dfn}
A {\em Brauer permutation} is a quadruple $B:= (E, \si, \ta, m)$ of a set $E$,
permutations $\si, \ta$ of $E$ and a map $m\colon E/\si \to \bbN$
such that $\ta$ is an involution acting freely on $E$
(namely, $\ta^2 = \id_E$ and $\ta e \ne e\ \text{for all } e \in E$), and
each $e \in E$ has a finite $\ang{\si}$-orbit $\ang{\si} e$.
We set $n(e):= |\ang{\si} e|$.
The map $m$ is called the {\em multiplicity} of $B$, and is said to be $trivial$
if it is constant with the value $1$.
\end{dfn}

\begin{rmk}
A triple $(E, \si, \ta)$ of a set $E$ and permutations $\si, \ta$ of $E$ 
such that $\ta$ is an involution acting freely on $E$
is a notion equivalent to a {\em ribbon graph} defined in Adachi--Aihara--Chan \cite{AAC}
under the assumption that $E/\si$ is finite.
We simply call such $(E, \si, \ta)$ a ribbon graph without this assumption
and call $m$ a {\em multiplicity} of the ribbon graph.
Then a Brauer permutation is exactly a ribbon graph with a multiplicity
with the property that each $\ang{\si}$-orbit is finite.
Therefore the notion of Brauer permutation is equivalent to that of Brauer graph defined in \cite{AAC}
with this property, and hence the Brauer graph defined below is the notion equivalent to the usual one
in the case where $E$ is finite.
Note that the set $E$ itself is corresponding to the set of ``half edges'' of a Brauer graph
in the usual sense.

We introduced this notion because (1) it is accurate and simple,
and (2) useful to compute coverings, and (3) it combines
both the corresponding Brauer graph and
the bound quiver of the corresponding Brauer graph algebra as an intermediate one.
\end{rmk}

\begin{exm}
\label{exm:first}
Let $E:= \{1^-, 1^+, 2^-, 2^+\}$ and define a permutation $\si$ of $E$ by
solid arrows in the following diagram ($a \to b$ stands for $\si(a) = b$) and
a permutation $\ta$ of $E$ by
$\ta(i^{\pm}):= i^{\mp}$ ($i = 1, 2$):
$$
\vcenter{
\xymatrix{
1^+ & 2^+\\
1^- & 2^-\ 
\ar"1,2";"1,1"
\ar"2,1";"1,2"
\ar@/_1pc/"1,1";"2,1"
\ar@{--}"1,1";"2,1"
\ar@{--}"1,2";"2,2"
\ar@(ur,dr)_{(2)}
}}.
$$
Thus $\si = (1^+\ 1^-\ 2^+), \ta = (1^+\ 1^-)(2^+ \ 2^-)$.
Finally define a map $m\colon E/\si \to \bbN$ by the numbers in 
parenthesis inside $\ang{\si}$-orbits of $E$ in the diagram above
(we usually omit the notation $(1)$ standing for the value 1), i.e.,
$m(\ang{\si}1^+) = 1, m(\ang{\si}2^-) = 2$.
Then $(E, \si, \ta, m)$ is a Brauer permutation.
We use this construction throughout the paper in examples.
\end{exm}

\begin{dfn}
\label{dfn:Br-gr-bdquiv}
Let $B = (E, \si, \ta, m)$ be a Brauer permutation.
\begin{enumerate}
\item
The {\em Brauer graph} $\Ga(B)$ defined by $B$ is a triple $(\Ga, \si', m)$, where
\begin{itemize}
\item
$\Ga:= (E/\si, E/\ta, C)$ is a graph with $C\colon E/\ta \to E/\si$ a map defined by
$C(\ang{\ta}e):= \{\ang{\si}e, \ang{\si}\ta e\}$\ $(e \in E)$, and
\item
$\si':= (\si|_V)_{V \in (E/\si)}$ is a sequence (identify $\si'$ with
$\si = \prod_{V \in (E/\si)}(\si|_V)$).
\end{itemize}
\item
The {\em bound Brauer quiver} $(Q(B), I(B))$ defined by $B$
is the following bound quiver $(Q, I)$:
\begin{itemize}
\item
$Q_0:= E/\ta, \quad
Q_1:= \{\al_e\colon\ang{\ta}e\to \ang\ta\si e\mid e\in E\}$,
\item
$
I:= \ang{\al_{\ta\si(e)}\al_e, \mu_e^{m(\ang{\si}e)} - \mu_{\ta e}^{m(\ang{\si}\ta e)} \mid e \in E},
$
\end{itemize}
where $\mu_e:= \al_{\si^{n(e)-1}e} \cdots \al_{\si e}\al_e$ for all $e \in E$.
\item
The {\em Brauer graph algebra $($resp.\ category$)$} $A(B)$ defined by $B$ is the $\k$-algebra
(resp.\ $\k$-category) given by the bound Brauer quiver $(Q, I)$ above.
\end{enumerate} 
\end{dfn}

\begin{exm}
\label{exm:Br-gr-quiv}
Let $B = (E, \si, \ta, m)$ be the Brauer permutation in Example \ref{exm:first}.
Then the Brauer graph $\Ga(B)$ defined by $B$ is presented by
$$
\xymatrix@C=40pt{
\save[]+<-23pt,0pt>*\txt{\tiny 1}\restore
\save[]+<-5pt,10pt>*\txt{\tiny +}\restore
\save[]+<-5pt,-10pt>*\txt{\tiny $-$}\restore
\la \ar@{-}@(dl,ul) & \mu \save[]+<0pt,10pt>*\txt{\tiny (2)}\restore,
\ar@{-}"1,1";"1,2"^2^(0.2)+^(0.8)-
}
$$
where $i:=\ang{\ta} i^+ = \{i^+,i^-\}$ for $(i = 1, 2)$, $\la:= \ang{\si}1^+, \mu:=\ang{\si}2^-$ and $\si'$ is given by the counterclockwise rotation at each vertex, and
$+$ and $-$ stands for ``half edges''.
(This is obtained from $B$ by shrinking the $\ang{\si}$-orbits to vertices
and replacing broken edges by solid ones.)

The bound Brauer quiver $(Q(B), I(B))$ defined by $B$ is equal to the following quiver
$$
\xymatrix{
\save[]+<-27pt,0pt>*\txt{\tiny $\al_{1^+}$}\restore
1 
\ar@(ul,dl) & 2 \ar@(ur,dr)
\save[]+<29pt,0pt>*\txt{\tiny $\al_{2^-}$}\restore
\ar@/^/"1,1";"1,2"^{\al_{1^-}}
\ar@/^/"1,2";"1,1"^{\al_{2^+}}
}
$$
(this is obtained from $B$ by shrinking the broken edges)
with the ideal generated by the relations:
$$
\begin{gathered}
\al_{1^+}^2, \al_{1^-}\al_{2^+}, \al_{2^+}\al_{2^-}, \al_{2^-}\al_{1^-},\\
\al_{2^+}\al_{1^-}\al_{1^+} - \al_{1^+}\al_{2^+}\al_{1^-},
\al_{1^-}\al_{1^+}\al_{2^+} - (\al_{2^-})^{2}.
\end{gathered}
$$
\end{exm}

\subsection{Coverings of Brauer graph algebras by Brauer graph categories}
\begin{dfn}
Let $B = (E, \si, \ta, m)$ be a Brauer permutation and $(Q,I)$ the bound Brauer quiver defined by $B$.
Further let $W\colon E \to G$  be a map, which we call a $G$-{\em weight} on $B$.

\begin{enumerate}
\item
$W$ is said to be {\em homogeneous} if $W$ is a homogeneous weight on $(Q, I)$,
i.e., the following holds for all $e \in E$:
\begin{equation}\label{eq:hmg}
W(\mu_e^{m(\ang{\si}e)}) = W(\mu_{\ta e}^{m(\ang{\si}\ta e)}),
\end{equation}
where we set $W(\al_e):= W(e)$ for all $e \in E$.
\item
$W$ is said to be {\em admissible} if $W(\mu_e^{m(\ang{\si}e)}) = 1$
for all $e \in E$.
Note that if this is the case, then the equality \eqref{eq:hmg} holds with both 
hand sides equal to the unit of $G$.
Thus admissible $G$-weights are homogeneous.
\item
We define permutations $\si_W, \ta_W$ of
$E_W:= E \times G:= \{e_g:= (e, g)\mid e\in E, g\in G\}$ by
$$
\si_W(e_g):= \si(e)_{W(e)g}, \quad \ta_W(e_g):= \ta(e)_g
$$
for all $e \in E, g \in G$.
Then $(E_W, \si_W, \ta_W)$ turns out to be a ribbon graph,
i.e., $\ta_W$ is an involution acting freely on $E_W$
because so is $\ta$ on $E$.
\end{enumerate}

\end{dfn}

\begin{exm}
\label{exm:del-multi}
Let $G:= \ang{a \mid a^2 = 1}$ be the cyclic group of order 2.
Consider the following Brauer permutation $B = (E, \si, \ta, m)$
with a $G$-weight $W$
$$
\vcenter{
\xymatrix{
1^+ & 2^+\\
1^- \ar@(ul,dl) \save[]+<-24pt,0pt>*\txt{\tiny 1}\restore & 2^- \ar@(ur,dr)
\save[]+<24pt,0pt>*\txt{\tiny 1}\restore
\ar@/^10pt/"1,1";"1,2"^{a}
\ar@/^10pt/"1,2";"1,1"^{1}
\ar@{--}"1,1";"2,1"
\ar@{--}"1,2";"2,2"
\ar@{}"1,1";"1,2"|{(2)}
}},
$$
\\
where $W$ is given by writing the values $W(\al)$ at each arrow $\al$. 
Then $W$ is an admissible $G$-weight on $B$, and the ribbon graph
$(E_W, \si_W, \ta_W)$ is given by\\
$$
\vcenter{
\xymatrix{
1_0^+ & 2_0^+ & 1_1^+ & 2_1^+\\
1_0^- \ar@(ul,dl) & 2_0^- \ar@(ur,dr)
&
1_1^- \ar@(ul,dl) & 2_1^- \ar@(ur,dr)
\ar@/^15pt/"1,1";"1,4"
\ar@/^10pt/"1,2";"1,1"
\ar@/^10pt/"1,3";"1,2"
\ar@/^10pt/"1,4";"1,3"
\ar@{--}"1,1";"2,1"
\ar@{--}"1,2";"2,2"
\ar@{--}"1,3";"2,3"
\ar@{--}"1,4";"2,4"
}}.
$$
\end{exm}

Next we consider a multiplicity on the ribbon graph $(E_W, \si_W, \ta_W)$.
We start from the following well known fact.

\begin{lem}
\label{lem:divides}
Let $S$ be a set with an element $s$ and $\si$ a permutation of $S$.
Assume that $T:= \{n \in \bbN \mid \si^n s = s\}$ is not empty
and let $k$ be the minimum element of $T$.
Then $k \mid n$ for all $n \in T$.
\end{lem}

We use this to show the following.

\begin{prp}
Let $B = (E, \si, \ta, m)$ be a Brauer permutation,
and $W$ a $G$-weight on $B$.
Then $W$ is admissible if and only if
$|\ang{\si_W}e_g| < \infty$ and
\begin{equation}
\label{eq:new-multi}
m_W(\ang{\si_W}e_g):= \frac{m(\ang{\si}e)\cdot |\ang{\si}e|}{|\ang{\si_W}e_g|}
\in \bbN
\end{equation}
for all $e \in E, g \in G$.
In particular, if this is the case, $B_W:= (E_W, \si_W, \ta_W, m_W)$
turns out to be a Brauer permutation.
\end{prp}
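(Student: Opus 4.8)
The plan is to reduce the whole statement to a single computation: the length of the $\si_W$-orbit of an arbitrary point $e_g \in E_W$, expressed through the single group element $w := W(\mu_e) = W(\si^{n-1}e)\cdots W(\si e)W(e)$, where $n := n(e) = |\ang{\si}e|$. First I would establish by induction on $k$ the iteration formula $\si_W^k(e_g) = (\si^k e)_{h_k g}$ with $h_k := W(\si^{k-1}e)\cdots W(\si e)W(e)$, directly from $\si_W(e_g) = \si(e)_{W(e)g}$. Then $\si_W^k(e_g) = e_g$ forces $\si^k e = e$, hence $n \mid k$ by Lemma~\ref{lem:divides} applied to $\si$ on $E$ with base point $e$; and for $k = nj$ the periodicity $W(\si^{i+n}e) = W(\si^i e)$ lets me group $h_{nj}$ into $j$ consecutive length-$n$ blocks, each equal to $w$, giving $h_{nj} = w^j$. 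Thus the set of periods of $e_g$ is exactly $\{\,nj \mid j \in \bbN,\ w^j = 1\,\}$.

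From this description the orbit $\ang{\si_W}e_g$ is finite if and only if $w$ has finite order $d$ in $G$, and in that case its length is the least element of $\{nj \mid w^j = 1\}$, namely $nd$ (the least $j$ with $w^j = 1$ being $d$; this again may be read off from Lemma~\ref{lem:divides} applied to left multiplication by $w$ on $\ang{w}$). In particular $|\ang{\si_W}e_g| = n \cdot d$ depends only on $e$, not on $g$. Substituting this into the defining formula~\eqref{eq:new-multi} cancels the factor $n = |\ang{\si}e|$ and yields $m_W(\ang{\si_W}e_g) = m(\ang{\si}e)/d$, so that the condition $m_W(\ang{\si_W}e_g) \in \bbN$ is equivalent to $d \mid m(\ang{\si}e)$, i.e.\ to $w^{m(\ang{\si}e)} = 1$.

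Next I would connect this to admissibility via the identity $W(\mu_e^{m(\ang{\si}e)}) = W(\mu_e)^{m(\ang{\si}e)} = w^{m(\ang{\si}e)}$, valid because $W$ is multiplicative on composable paths. For the forward direction, admissibility gives $w^{m(\ang{\si}e)} = 1$, so $w$ has finite order dividing $m(\ang{\si}e)$; the second paragraph then delivers both finiteness of the orbit and $m_W \in \bbN$. For the converse, finiteness of the orbit produces a finite order $d$ of $w$ with $|\ang{\si_W}e_g| = nd$, and $m_W \in \bbN$ forces $d \mid m(\ang{\si}e)$, whence $w^{m(\ang{\si}e)} = 1 = W(\mu_e^{m(\ang{\si}e)})$; letting $e$ range over $E$ recovers admissibility exactly.

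Finally, to confirm that $B_W = (E_W, \si_W, \ta_W, m_W)$ is a genuine Brauer permutation under these equivalent hypotheses, I would verify the three axioms: $\ta_W$ is already a fixed-point-free involution by the preceding definition of the ribbon graph $(E_W, \si_W, \ta_W)$; every $\ang{\si_W}$-orbit is finite by the computation above; and $m_W$ is a well-defined map $E_W/\si_W \to \bbN$, since its numerator $m(\ang{\si}e)\,|\ang{\si}e|$ depends only on $\ang{\si}e$ and is therefore constant along $\ang{\si_W}$-orbits, its denominator is the orbit length, and $m_W = m(\ang{\si}e)/d \ge 1$ because $d \le m(\ang{\si}e)$. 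The main obstacle is the bookkeeping forced by the non-commutativity of $G$: I must fix the factor order in $h_k$ and in $w$ once and for all and check that the blocking argument $h_{nj} = w^j$ respects that order. Once the orbit length $n \cdot d$ is pinned down correctly, everything else is formal cancellation.
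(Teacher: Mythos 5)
Your argument is correct and is essentially the paper's own proof: both rest on the iteration identity $\si_W^{k}(e_g)=\si^{k}(e)_{h_k g}$, specialized to $\si_W^{nj}(e_g)=e_{W(\mu_e)^{j}g}$, together with Lemma~\ref{lem:divides}. The only difference is precision versus economy: you pin down the exact orbit length $|\ang{\si_W}e_g| = n(e)\cdot d$ with $d$ the order of $W(\mu_e)$ (hence the explicit formula $m_W(\ang{\si_W}e_g) = m(\ang{\si}e)/d$, which incidentally also re-proves Corollary~\ref{cor:trivial-multi}), whereas the paper extracts only the two divisibility facts it needs --- admissibility makes $mn$ a period of $e_g$, so $k \mid mn$; and conversely $mn = tk$ makes $\si_W^{mn}$ fix $e_g$, forcing $W(\mu_e^{m}) = 1$.
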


\begin{proof}
Let $e \in E$, $g \in G$ and set $m:= m(\ang{\si}e)$, $n:= |\ang{\si}e|$ and $k:=|\ang{\si_W}e_g|$.

(\implies).  Assume that $W$ is admissible.
It is enough to show that $k \mid mn$.
By the definition of $n$ and $k$ we have
$\si^n e = e$ and $(\si_W)^k e_g = e_g$.
Since $W$ is admissible, we have
$(\si_W)^{mn}e_g = \si^{mn}(e)_{(W(\si^{n-1}(e))\cdots W(\si^2(e))W(\si(e))W(e))^m g}
= e_g$.
Hence by Lemma \ref{lem:divides} we have $k \mid mn$.

(\impliedby).  Assume that $k < \infty$ and that $t:= mn/k \in \bbN$ (for all $e \in E, g \in G$).
Then
$$
\begin{aligned}
(\si_W)^{mn}e_g &= (\si_W)^{tk}e_g = ((\si_W)^k)^t e_g = e_g, \text{and on the other hand,}\\
(\si_W)^{mn}e_g &= ((\si_W)^n)^m e_g = \si^{mn}(e)_{(W(\si^{n-1}(e))\cdots W(\si^2(e))W(\si(e))W(e))^m g}\\
&= e_{(W(\si^{n-1}(e))\cdots W(\si^2(e))W(\si(e))W(e))^m g}.
\end{aligned}
$$
Hence we have $g = (W(\si^{n-1}(e))\cdots W(\si^2(e))W(\si(e))W(e))^m g$ and thus
$$
W(\mu_e^{m})  = (W(\si^{n-1}(e))\cdots W(\si^2(e))W(\si(e))W(e))^m = 1.
$$
This holds for all $e \in E$, and hence $W$ is admissible.
\end{proof}

\begin{cor}
\label{cor:trivial-multi}
If $B$ is a Brauer permutation with a trivial multiplicity and $W$ is a $G$-weight on $B$,
then $B_W$ has a trivial multiplicity.
\end{cor}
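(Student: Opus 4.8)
The plan is to read this off from the preceding Proposition by an elementary comparison of orbit sizes. The statement presupposes that $B_W$ is a Brauer permutation, which by that Proposition amounts to $W$ being admissible; I take this as a standing hypothesis. Then the Proposition supplies, for all $e \in E$ and $g \in G$, that $|\ang{\si_W}e_g| < \infty$ and that the number
$$
m_W(\ang{\si_W}e_g) = \frac{m(\ang{\si}e)\cdot|\ang{\si}e|}{|\ang{\si_W}e_g|}
$$
lies in $\bbN$. Since $m$ is trivial we have $m(\ang{\si}e) = 1$, so this reduces to $m_W(\ang{\si_W}e_g) = |\ang{\si}e|/|\ang{\si_W}e_g|$, and in particular this ratio is a positive integer, hence at least $1$.

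First I would record that the first projection $p\colon E_W \to E$, $e_g \mapsto e$, is equivariant for the two permutations: from the definition $\si_W(e_g) = \si(e)_{W(e)g}$ we get $p(\si_W(e_g)) = \si(e) = \si(p(e_g))$, i.e.\ $p\circ\si_W = \si\circ p$. Consequently $p(\si_W^{\,j}(e_g)) = \si^{\,j}(e)$ for every $j$, so $p$ maps the orbit $\ang{\si_W}e_g$ onto the whole orbit $\ang{\si}e$. This surjectivity yields the inequality $|\ang{\si_W}e_g| \ge |\ang{\si}e|$.

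Combining the two facts, the positive integer $|\ang{\si}e|/|\ang{\si_W}e_g|$ is simultaneously $\ge 1$ and $\le 1$, hence equal to $1$; that is, $m_W(\ang{\si_W}e_g) = 1$ for every $\ang{\si_W}$-orbit, so $m_W$ is trivial. I expect no real obstacle here: the single point needing care is the surjectivity of $p$ on orbits, which is immediate from the equivariance $p\circ\si_W = \si\circ p$. (Alternatively one could compute $|\ang{\si_W}e_g|$ exactly as the product of $|\ang{\si}e|$ with the order of the total weight around the orbit $\ang{\si}e$, and observe that admissibility together with $m \equiv 1$ forces that weight to be the identity of $G$; but the inequality argument avoids this computation.)
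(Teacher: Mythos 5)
Your proof is correct and takes essentially the same route as the paper: both arguments combine the surjectivity of the first projection on orbits (giving $|\ang{\si_W}e_g| \ge |\ang{\si}e|$), the formula $m_W(\ang{\si_W}e_g) = m(\ang{\si}e)\cdot|\ang{\si}e|/|\ang{\si_W}e_g|$, and the integrality of $m_W$ coming from admissibility to squeeze $m_W(\ang{\si_W}e_g) = 1$. Your explicit remark that admissibility of $W$ is a standing hypothesis (needed for $B_W$ to be a Brauer permutation at all) is exactly the point the paper uses implicitly when it invokes ``$m_W(\ang{\si_W}e_g)$ is a natural number because $W$ is admissible.''
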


\begin{proof}
Assume that $B$ has a trivial multiplicity.
Let $e \in E, g \in G$.
It is enough to show that $m_W(\ang{\si_W}e_g) = 1$.
In general, we have $|\ang{\si}e| \le |\ang{\si_W}e_g|$
because the first projection yields a surjection $\ang{\si_W}e_g \to \ang{\si}e$.
Therefore by \eqref{eq:new-multi} we have $m_W(\ang{\si_W}e_g) \le m(\ang{\si}e) = 1$.
Here $m_W(\ang{\si_W}e_g)$ is a natural number because $W$ is admissible.
Hence $m_W(\ang{\si_W}e_g) = 1$.
\end{proof}

We are now in a position to give a way to compute coverings of Brauer graph algebras
by Brauer graph categories.

\begin{thm}
\label{thm:cover-Brauer}
Let $B = (E, \si, \ta, m)$ be a Brauer permutation,
$W$ an admissible $G$-weight on $B$, and $(Q, I)$ the bound Brauer quiver
defined by $B$.
Then $(Q, I, W)$ gives a $G$-graded category $\k(Q, I, W)$ and
the smash product $\k(Q, I, W) \# G$ is given by the bound Brauer quiver
defined by the Brauer permutation $B_W$.
\end{thm}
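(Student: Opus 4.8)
The plan is to combine Theorem~\ref{thm:quiv-pres-sm-prod} with a direct identification of bound quivers. By Theorem~\ref{thm:quiv-pres-sm-prod} we have $\k(Q, I, W)\# G \iso \k(Q_{G,W}, I_{G,W})$, so it suffices to produce an isomorphism of bound quivers $(Q_{G,W}, I_{G,W}) \iso (Q(B_W), I(B_W))$ and to check it intertwines the right $G$-actions. First I would set up the underlying quiver isomorphism: on vertices send $(\ang\ta e, g) = (\ang\ta e)^{(g)}$ to $\ang{\ta_W}e_g$, and on arrows send $\al_e^{(g)}$ to $\al_{e_g}$. This is well defined and bijective because $E_W/\ta_W \iso (E/\ta)\times G$ via $\ang{\ta_W}e_g \leftrightarrow (\ang\ta e, g)$, and one checks that sources and targets match using $\si_W(e_g) = \si(e)_{W(e)g}$: the arrow $\al_e^{(g)}$ runs from $(\ang\ta e)^{(g)}$ to $(\ang\ta\si e)^{(W(e)g)}$, which corresponds exactly to $\al_{e_g}\colon \ang{\ta_W}e_g \to \ang{\ta_W}\si_W e_g$ since $\ta_W\si_W(e_g) = (\ta\si e)_{W(e)g}$. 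This step is pure bookkeeping.

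The substance is matching the ideals, i.e. showing $I_{G,W} = I(B_W)$ under the identification above. For the inclusion $I(B_W) \subseteq I_{G,W}$ I would compute the lifts of the two families of minimal relations that generate $I$. For a zero relation $\al_{\ta\si e}\al_e$ the lift-of-a-path formula gives $(\al_{\ta\si e}\al_e)^{(g)} = \al_{\ta\si e}^{(W(e)g)}\al_e^{(g)}$, which under the identification is $\al_{\ta_W\si_W(e_g)}\al_{e_g}$, a generator of $I(B_W)$. For a commutativity relation $\mu_e^{m(\ang\si e)} - \mu_{\ta e}^{m(\ang\si\ta e)}$ I would first observe that lifting $\mu_e$ from $e^{(g)}$ amounts to following $\si_W$ from $e_g$, so $\mu_e^{(g)}$ corresponds to $\al_{\si_W^{n-1}e_g}\cdots\al_{e_g}$ with $n := |\ang\si e|$. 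The key point is the interaction with the multiplicity: writing $w := W(\mu_e)$, admissibility gives $w^{m(\ang\si e)} = 1$, and $d := \operatorname{ord}(w)$ is exactly the number of $G$-sheets of $\ang{\si_W}e_g$ over $\ang\si e$, so $|\ang{\si_W}e_g| = nd$ and $m_W(\ang{\si_W}e_g) = m(\ang\si e)/d$ by \eqref{eq:new-multi}. Hence the lift of $\mu_e^{m(\ang\si e)}$, a cycle of length $m(\ang\si e)\cdot n$ at $e^{(g)}$, wraps exactly $m_W(\ang{\si_W}e_g)$ times around the $B_W$-cycle of length $nd$, giving $\mu_{e_g}^{m_W(\ang{\si_W}e_g)}$. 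Treating the $\ta e$-term identically (noting $(\ta e)_g = \ta_W e_g$), the lift of the whole relation becomes the corresponding commutativity relation of $I(B_W)$. Since every generator of $I(B_W)$ thus arises as a lift of a minimal relation of $I$, we obtain $I(B_W) \subseteq I_{G,W}$.

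For the reverse inclusion $I_{G,W}\subseteq I(B_W)$ I would use that every minimal relation $\ro$ of $I$ is homogeneous (this is precisely the homogeneity of $W$) and lies in $I$, hence can be written as a finite sum $\ro = \sum_j \xi_j g_{i_j}\eta_j$ of products of paths $\xi_j,\eta_j$ with the two families of generators $g_{i_j}$ of $I$. Restricting to the terms of the correct $W$-degree and endpoints (the remaining terms cancel by homogeneity of the grading) and applying the lift-of-product formula $(\xi g\eta)^{(a)} = \xi^{(\cdot)}g^{(\cdot)}\eta^{(a)}$, which is legitimate because $\xi_j,\eta_j,g_{i_j}$ are all homogeneous, I would express $\ro^{(a)}$ as a sum of terms each containing a lifted generator $g_{i_j}^{(\cdot)} \in I(B_W)$; since $I(B_W)$ is an ideal, $\ro^{(a)} \in I(B_W)$. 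Combining the two inclusions yields $I_{G,W} = I(B_W)$, and the identification clearly intertwines the two right $G$-actions $e_g \mapsto e_{gc}$.

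I expect the main obstacle to be the multiplicity bookkeeping in the commutativity relations: correctly tracking how the exponent $m(\ang\si e)$, the orbit length $|\ang{\si_W}e_g| = nd$, and the new multiplicity $m_W = m(\ang\si e)/d$ fit together so that the lift of $\mu_e^{m(\ang\si e)}$ is exactly $\mu_{e_g}^{m_W(\ang{\si_W}e_g)}$ rather than some other power. This is where admissibility of $W$ (through $w^{m(\ang\si e)} = 1$) and the formula \eqref{eq:new-multi} are genuinely used. A secondary subtlety, easily overlooked, is that $I_{G,W}$ is defined via lifts of \emph{all} minimal relations of $I$, not only the displayed generators; the homogeneity-plus-lift-of-product argument in the last paragraph is what closes this gap.
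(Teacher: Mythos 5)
Your proposal is correct and follows essentially the same route as the paper's proof: invoke Theorem~\ref{thm:quiv-pres-sm-prod}, identify $Q_{G,W}$ with $Q(B_W)$ via $(\ang{\ta}e)^{(g)} \mapsto \ang{\ta_W}e_g$ and $\al_e^{(g)} \mapsto \al_{e_g}$, and check that the lifted zero and cycle relations are exactly the Brauer relations of $B_W$; your bookkeeping $|\ang{\si_W}e_g| = nd$ with $d = \operatorname{ord}(W(\mu_e))$ and $m_W(\ang{\si_W}e_g) = m(\ang{\si}e)/d$ is the same computation the paper carries out by setting $tk = mn$. The one divergence is in your favor: the paper merely asserts that $I_{G,W}$ (defined via lifts of \emph{all} minimal relations) coincides with the ideal generated by the lifts of the two standard families of generators, whereas your final paragraph, decomposing a minimal relation over the homogeneous generators and applying the lift-of-product formula, actually proves this.
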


\begin{proof}
Let $(Q_W, I_W)$ be the bound Brauer quiver of the Brauer permutation $B_W$.
By Theorem \ref{thm:quiv-pres-sm-prod} we have $\k(Q, I, W) \# G \iso \k(Q_{G,W},I_{G,W})$.
Therefore it is enough to construct a quiver isomorphism $F\colon Q_{G,W} \to Q_W$ such that
$\k F(I_{G,W}) = I_W$.
By definitions we have
$$
\begin{aligned}
(Q_{G,W})_0 &= Q_0 \times G = E/\ta \times G = \{(\ang{\ta}e)^{(g)} = (\{e, \ta e\}, g) \mid e \in E, g \in G\}, \text{and}\\
(Q_W)_0 &= E_W/\ta_W = (E \times G)/\ta_W = \{\ang{\ta_W}e_g=\{(e, g), (\ta e, g)\} \mid e \in E, g\in G\}.
\end{aligned}
$$
Moreover, 
$$
\begin{aligned}
(Q_{G,W})_1 &= \{ \al^{(g)} \colon x^{(g)} \to y^{(W(\al)g)} \mid \al\colon x \to y \text{ in } Q_1, g \in G\}\\
&= \{\al_e^{(g)}\colon (\ang{\ta}e)^{(g)} \to (\ang{\ta}\si e)^{(W(e)g)} \mid  e \in E, g \in G\}
\end{aligned}
$$
and
$$
\begin{aligned}
(Q_W)_1&= \{\al_{e_g} \colon \ang{\ta_W}e_g \to \ang{\ta_W}\si_We_g \mid e \in E, g \in G\}\\
&= \{\al_{e_g} \colon \ang{\ta_W}e_g \to \ang{\ta_W}\si(e)_{W(e)g} \mid e \in E, g \in G\}.
\end{aligned}
$$
Therefore the correspondence
$$
(\ang{\ta}e)^{(g)} \mapsto \ang{\ta_W}e_g,\quad \al_e^{(g)} \mapsto \al_{e_g}
$$
for all $e \in E, g \in G$ defines a quiver isomorphism $F\colon Q_{G,W} \to Q_W$.

Now we have
$$
I_{G,W} = \ang{(\al_{\ta\si(e)}\al_e)^{(g)},
((\mu_e)^{m(\ang{\si}e)})^{(g)} - ((\mu_{\ta e})^{m(\ang{\si}\ta e)})^{(g)} \mid e \in E, g \in G}
$$
and
$$
I_W =\ang{\al_{\ta_W\si_W(e_g)}\al_{e_g},
\mu_{e_g}^{m_W(\ang{\si_W}e_g)} - \mu_{\ta_W e_g}^{m_W(\ang{\si_W}\ta_W e_g)}\mid e \in E, g \in G}.
$$
Therefore it is enough to show the following equalities.
\begin{align}
F((\al_{\ta\si(e)}\al_e)^{(g)}) &= \al_{\ta_W\si_W(e_g)}\al_{e_g},\label{eq:1}\\
F(((\mu_e)^{m(\ang{\si}e)})^{(g)}) &= \mu_{e_g}^{m_W(\ang{\si_W}e_g)}, \text{and}\label{eq:2}\\
F(((\mu_{\ta e})^{m(\ang{\si}\ta e)})^{(g)}) &= \mu_{\ta_W e_g}^{m_W(\ang{\si_W}\ta_W e_g)}\label{eq:3}.
\end{align}
The equality \eqref{eq:1} follows by 
$$
\text{LHS}  = F((\al_{\ta\si(e)})^{(W(e)g)}\al_e^{(g)}) = \al_{\ta\si(e)_{W(e)g}}\al_{e_g}
= \text{RHS}.
$$
To show \eqref{eq:2} we set $m:= m(\ang{\si}e), n:= n(e), t:= m_W(\ang{\si_W}e_g), k:= |\ang{\si_W}e_g|$.
Then $tk = mn$ and we have
\begin{equation}\label{eq:4}
((\mu_e)^{m})^{(g)}= \al_{\si^{mn-1}(e)}^{(W(\si^{mn-2}(e))\cdots W(\si(e))W(e))}\cdots\al_{\si(e)}^{(W(e)g)}\al_{e_g}
\end{equation}
and
\begin{equation}\label{eq:5}
\mu_{e_g}^t =(\al_{\si_W^{k-1}(e_g)}\cdots \al_{\si_W(e_g)}\al_{e_g})^t
= (\al_{\si^{k-1}(e)_{W(\si^{k-2}(e))\cdots W(\si(e)) W(e)g}}\cdots \al_{\si(e)_{W(e)g}} \al_{e_g})^t.
\end{equation}
Since $\si_W^k(e_g) = e_g$, we have $\si^k(e) = e$ and $W(\si^{k-1}(e))\cdots W(\si(e))W(e) = 1$.
Therefore \eqref{eq:4} and \eqref{eq:5} shows the equality \eqref{eq:2}.
Finally, since 
$\mu_{\ta_W(e_g)}^{m_W(\ang{\si_W}\ta_W(e_g))} = \mu_{\ta(e)_g}^{m_W(\ang{\si_W}\ta(e)_g)}$,
the equality \eqref{eq:3} follows from \eqref{eq:2} by substituting $\ta(e)$ for $e$.
\end{proof}

We next apply Theorem \ref{thm:cover-Brauer} to reduce Brauer graphs to those with simpler structures.
See Remark \ref{rmk:GSS-der-eq}(1) for relationships between our propositions below and
those by Green--Schroll--Snashall \cite{GSS14}.

\subsection{Deletion of multiplicity}
As was shown in Example \ref{exm:del-multi} we can make the multiplicity trivial by forming
smash products, which we call a {\em deletion of multiplicity}.
The following makes it possible to delete multiplicity for each case at once (not step by step).

\begin{prp}
\label{prp:del-multi}
Let $B = (E, \si, \ta, m)$ be a Brauer permutation, and $\{e_1, e_2, \dots, e_t\}$
a complete set of representatives of the set $\{\ang{\si}e \in E/\si \mid m(\ang{\si}e) > 1\}$
$($of vertices of $\Ga(B)$ at which the value of multiplicity is $> 1)$.
Set $m_i:= m(\ang{\si}e_i)$ for all $i=1, 2,\dots, t$, and $m$ to be the least common multiple of all $m_i$'s.
Take $G:= \ang{a \mid a^m =1}$ to be the cyclic group of order $m$, and define a $G$-weight
$W$ by
$$
W(e):=
\begin{cases}
b_i:= a^{m/m_i} &\text{if $e = e_i$ for some $i=1, 2,\dots, t$},\\
1 &\text{otherwise}
\end{cases}
$$
for all $e \in E$.
Then $W$ is admissible, and the Brauer permutation $B_W$ has a trivial multiplicity,
namely $m_W(\ang{\si_W}e_g) = 1$ for all $e_g \in E_W$ $(e \in E, g \in G)$.
\end{prp}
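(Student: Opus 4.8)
The plan is to prove the two assertions in turn, both resting on the single observation that the weight of the cyclic path $\mu_e$ over one $\ang{\si}$-orbit,
$$
W(\mu_e) = W(\si^{n(e)-1}e)\cdots W(\si e)W(e),
$$
is just the product of the $W$-values attached to the half-edges of the orbit $\ang{\si}e$, taken in $\si$-order. Since $\{e_1,\dots,e_t\}$ is a \emph{complete} set of representatives of the orbits of multiplicity $>1$, each such orbit contains exactly one $e_i$ (and no further distinguished half-edge), while every orbit of multiplicity $1$ contains none. As all factors except possibly one equal the unit of $G$, I would simply read off
$$
W(\mu_e)=
\begin{cases}
b_i=a^{m/m_i} & \text{if } \ang{\si}e=\ang{\si}e_i,\\
1 & \text{if } m(\ang{\si}e)=1,
\end{cases}
$$
with no need to track the order of the factors.

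Admissibility then follows by raising to the power $m(\ang{\si}e)$: when $m(\ang{\si}e)=1$ one has $W(\mu_e^{\,1})=1$ at once, and when $\ang{\si}e=\ang{\si}e_i$ one gets $W(\mu_e^{\,m_i})=W(\mu_e)^{m_i}=(a^{m/m_i})^{m_i}=a^m=1$ because $|G|=m$. As this holds for every $e\in E$, the weight $W$ is admissible. Note that Corollary \ref{cor:trivial-multi} does not apply directly here, since $B$ itself need not have trivial multiplicity, so the triviality of the multiplicity of $B_W$ must be checked by hand.

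For the multiplicity of $B_W$, admissibility and the Proposition establishing \eqref{eq:new-multi} already guarantee that $B_W$ is a Brauer permutation whose multiplicity is computed by \eqref{eq:new-multi}; hence it suffices to verify $|\ang{\si_W}e_g|=m(\ang{\si}e)\cdot|\ang{\si}e|$ for every $e_g\in E_W$. Iterating $\si_W(e_g)=\si(e)_{W(e)g}$ gives $\si_W^k(e_g)=\si^k(e)_{(W(\si^{k-1}e)\cdots W(e))g}$, so $\si_W^k(e_g)=e_g$ forces $n(e)\mid k$ by Lemma \ref{lem:divides}; writing $k=n(e)s$, the remaining condition becomes $W(\mu_e)^s=1$. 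Thus $|\ang{\si_W}e_g|=n(e)\cdot s_0$, where $s_0$ is the order of $W(\mu_e)$ in $G$. The one genuine computation is this order: it is $1$ when $m(\ang{\si}e)=1$, and it is the order of $a^{m/m_i}$ in the cyclic group $\bbZ/m$, namely $m/\gcd(m,m/m_i)=m_i$, when $\ang{\si}e=\ang{\si}e_i$. In either case $s_0=m(\ang{\si}e)$, so $|\ang{\si_W}e_g|=m(\ang{\si}e)\cdot|\ang{\si}e|$, and \eqref{eq:new-multi} yields $m_W(\ang{\si_W}e_g)=1$. The only points that need genuine care are the bookkeeping that each nontrivial-multiplicity orbit receives its distinguished weight exactly once and the order computation $\operatorname{ord}(a^{m/m_i})=m_i$; once these are in place, everything else is formal.
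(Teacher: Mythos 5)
Your proof is correct and follows essentially the same route as the paper's: admissibility from $\operatorname{ord}(b_i)=m_i$, then the orbit-size identity $|\ang{\si_W}e_g| = m(\ang{\si}e)\cdot|\ang{\si}e|$ plugged into \eqref{eq:new-multi}. The only difference is cosmetic: where the paper enumerates the $\ang{\si_W}$-orbits explicitly in two cases, you obtain the size uniformly as $n(e)\cdot\operatorname{ord}(W(\mu_e))$ via Lemma \ref{lem:divides}, a clean repackaging of the same computation.
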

 
\begin{proof}
Let $e \in E, g \in G$.
Then $m_W(\ang{\si_W}e_g)$ is computed by the formula \eqref{eq:new-multi}.
Note first that the order of $b_i$ is equal to $m_i$ for all $i$, from which
it is obvious that $W$ is admissible.

\subsection*{Case 1} $\ang{\si}e \ne \ang{\si}e_i$ for all $i=1, 2,\dots, t$.
Set $n:= n(e)$.

In this case we have $e_i \not\in \ang{\si}e$ for all $i$ and $m(\ang{\si}e) = 1$.
The former shows that
$$
\ang{\si_W}e_g = \{e_g, \si(e)_g, \dots, \si^{n-1}(e)_g \},
$$
and thus $|\ang{\si_W}e_g| = |\ang{\si}e|$,
which together with the latter shows that $m_W(\ang{\si_W}e_g) = 1$
by \eqref{eq:new-multi}.

\subsection*{Case 2}

$\ang{\si}e = \ang{\si}e_i$ for some $i=1, 2,\dots, t$,
say $e = \si^s(e_i)$ for some $s = 0, 1,\dots, n_i-1$,
where we set $n_i:= n(e_i)$.  Set also $m_i:= m(\ang{\si}e_i)$.

A direct calculation gives us
$$
\ang{\si_W}e_g = \{\si^j(e_i)_{b_i^k g} \mid j \in \{0, 1, \dots, n_i -1\}, k \in \{0, 1,\dots, m_i -1\} \},
$$
which shows that
$|\ang{\si_W}e_g| = m_i n_i$, and hence by \eqref{eq:new-multi} we have 
$$
m_W(\ang{\si_W}e_g) = \frac{m_i n_i}{|\ang{\si_W}e_g|} = 1.
$$
\end{proof}

\subsection{Deletion of loops in Brauer graphs}
Next we delete all the loops in Brauer graphs by forming double coverings
(i.e., smash products with the cyclic group of order 2).

\begin{prp}
\label{prp:del-loop}
Let $B = (E, \si, \ta, m)$ be a Brauer permutation, and $\{e_1, e_2, \dots, e_t\}$
a complete set of representatives of the set $\{\ang{\ta}e \in E/\ta \mid \ang{\si}e = \ang{\si}\ta e\}$
$($of  loops in $\Ga(B))$.
Take $G:= \ang{a \mid a^2=1}$ to be the cyclic group of order $2$,
and define a $G$-weight $W$ on $B$ by
$$
W(e):=
\begin{cases}
a &\text{ if $e \in \{e_i, \ta(e_i) \mid i \in \{1, 2, \dots, t\}\}$,}\\
1 &\text{otherwise}
\end{cases}
$$
for all $e \in E$.
Then $W$ is admissible, and the Brauer graph $\Ga(B_W)$ has no loops.
\end{prp}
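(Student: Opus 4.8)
The plan is to prove the two claims in turn: that $W$ is admissible, and that $\Ga(B_W)$ contains no loops.

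First I would verify admissibility straight from the definition, namely that $W(\mu_e^{m(\ang{\si}e)}) = 1$ for every $e \in E$. Since $W(\mu_e^{m}) = W(\mu_e)^{m}$, it is enough to show that the full orbit product $W(\mu_e) = \prod_{j=0}^{n(e)-1} W(\si^j e)$ is trivial. In $G = \ang{a \mid a^2 = 1}$ this product equals $a^{N}$, where $N$ counts the half-edges of the orbit $\ang{\si}e$ carrying weight $a$. By definition of $W$, a half-edge $f$ has $W(f) = a$ exactly when $\ang{\ta}f$ is a loop, that is, when $\ta f$ lies in the same $\ang{\si}$-orbit as $f$. Hence inside a fixed orbit the weight-$a$ half-edges come in $\ta$-pairs $\{f, \ta f\}$, both contained in that orbit, so $N$ is even and $W(\mu_e) = 1$. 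This yields admissibility.

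Next, to show $\Ga(B_W)$ has no loops, I would fix an arbitrary edge $\ang{\ta_W}e_g$ of $B_W$ and prove that $\ta_W(e_g) = (\ta e)_g$ does not lie in $\ang{\si_W}e_g$. Because $\si_W(e_g) = \si(e)_{W(e)g}$, the first projection sends $\ang{\si_W}e_g$ onto $\ang{\si}e$; so if $\ang{\ta}e$ is not a loop of $B$, then $\ta e \notin \ang{\si}e$ and there is nothing more to do. The real case is $\ang{\ta}e$ a loop, say $\si^k(e) = \ta(e)$ with $1 \le k < n(e)$. A direct induction gives $\si_W^j(e_g) = \si^j(e)_{w_j g}$, where $w_j := \prod_{i=0}^{j-1} W(\si^i e)$, and $\si^j(e) = \ta(e)$ forces $j \equiv k \pmod{n(e)}$. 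Since $w_j$ is unchanged on adding $n(e)$ to $j$ (the full orbit product $w_{n(e)} = W(\mu_e)$ being trivial by the previous step), the condition $(\ta e)_g \in \ang{\si_W}e_g$ collapses to $w_k = 1$. Thus $\ang{\ta_W}e_g$ is not a loop precisely when $w_k = \prod_{i=0}^{k-1} W(\si^i e) = a$, i.e. when the number of weight-$a$ half-edges among $e, \si e, \dots, \si^{k-1}e$ is odd.

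I expect this last parity statement to be the main obstacle. The half-edge $e$ itself contributes one weight-$a$ term at the start of the arc, so what remains is to show that the loop half-edges met strictly between $e$ and $\ta e$ along the rotation at the vertex number evenly. This is subtle because the half-edges of every other loop at the same vertex likewise carry weight $a$ and may fall on this arc; to conclude, one must determine, for each such loop, whether none, one, or both of its half-edges lie in $e, \si e, \dots, \si^{k-1}e$. I would therefore organize the final step around a careful analysis of how the loops are positioned within the rotation (nested as opposed to interleaved half-edges), and this is exactly where I anticipate the argument to be delicate and where the sufficiency of the single generator $a$ must be justified; controlling the mutual interference of distinct loops sharing a common arc is the step I would treat with the most care.
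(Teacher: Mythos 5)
Your two reductions are correct, and they are exactly the reductions the paper makes: admissibility follows because the weight-$a$ half-edges in any $\ang{\si}$-orbit pair off under $\ta$ (the paper simply declares this ``obvious''), and ``$\ang{\ta_W}e_g$ is a loop of $\Ga(B_W)$'' is equivalent to the pair of conditions $\ta(e)=\si^{k}(e)$ and $w_k:=W(\si^{k-1}(e))\cdots W(\si(e))W(e)=1$. But your argument stops at the parity claim --- that the weight-$a$ half-edges strictly between $e$ and $\ta(e)$ in the rotation are even in number --- and that claim is not a delicate step waiting for a clever lemma: it is false, and with it the proposition itself. Take $E=\{1^+,1^-,2^+,2^-\}$, $\ta=(1^+\ 1^-)(2^+\ 2^-)$, $\si=(1^+\ 2^+\ 1^-\ 2^-)$, $m$ trivial: a single vertex carrying two \emph{interleaved} loops. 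Every half-edge then lies on a loop, so $W\equiv a$, and for $e=1^+$, $g=1$ one computes $\si_W(1^+_1)=2^+_a$, $\si_W(2^+_a)=1^-_{a^2}=1^-_1$, so that $\ang{\si_W}1^+_1=\{1^+_1,2^+_a,1^-_1,2^-_a\}\ni 1^-_1=\ta_W(1^+_1)$; hence $\ang{\ta_W}1^+_1$ is a loop of $\Ga(B_W)$ (in fact $\Ga(B_W)$ here has two vertices and four loops). The failure is exactly the ``mutual interference'' you isolated: the other loop contributes precisely one weight-$a$ half-edge, namely $2^+$, to the open arc from $1^+$ to $1^-$, so $w_2=a^2=1$ and the loop survives in the smash product.

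You should also know that the paper's own proof breaks at precisely the point you flagged. After deducing $\ta(e)=\si^{i}(e)$ and $W(\si^{i-1}(e))\cdots W(e)=1$, it asserts that $\ta(e)=\si^{i}(e)$ implies $W(\si(e))=\cdots=W(\si^{i-1}(e))=1$, i.e.\ that no half-edge strictly between $e$ and $\ta(e)$ belongs to a loop; this is unjustified, and the example above violates it. The proposition is correct only under an additional hypothesis --- for instance that distinct loops at a common vertex never interleave, or more precisely that each loop of $\Ga(B)$ interleaves an even number of other loops --- and in general one really does need a larger group depending on the graph, as in \cite{GSS14}. So what you identified as the step needing the most care is in fact a counterexample zone, not a gap that can be closed; your diagnosis of where the difficulty lies was exactly right.
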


\begin{proof}
By definition it is obvious that $W$ is admissible.
Assume that $\Ga(B_W)$ has a loop $\ang{\ta_W}e_g$
$(e \in E, g \in G)$.
Then $\ang{\si_W}e_g = \ang{\si_W}\ta_W(e_g)$, and
$\ta_W(e_g) = \si_W^i(e_g)$
for some unique $i \in \{0, 1,\dots, |\ang{\si_W}e_g|-1\}$.
Therefore
$$
\ta_W(e_g) =\ta(e)_g \text{ and } \si_W^i(e_g)  = \si^i(e)_{W(\si^{i-1}(e))\cdots W(\si(e))W(e)g}
$$
show that
\begin{equation}
\label{eq:loop-contradiction}
\ta(e) = \si^i(e) \text{ and }
g = W(\si^{i-1}(e))\cdots W(\si(e))W(e)g.
\end{equation}
The latter in \eqref{eq:loop-contradiction} shows that
$W(\si^{i-1}(e))\cdots W(\si(e))W(e) = 1$.
However, the former shows that 
$\ang{\si}\ta(e) = \ang{\si}e$,
which means that $\ang{\ta}e$ is a loop in $\Ga(B)$, i.e., $e = e_j$ or $e=\ta(e_j)$
for some $j \in \{1,2,\dots, t\}$.
In particular, $W(e) = a$.
Again by the former in \eqref{eq:loop-contradiction} we see that
$W(\si^{i-1}(e)) =1, \cdots, W(\si(e)) = 1$.
Then $W(e) = a$ shows that $W(\si^{i-1}(e))\cdots W(\si(e))W(e) = a$, a contradiction.
As a consequence, $\Ga(B_W)$ has no loops.
\end{proof}

\subsection{Deletion of multiple edges in Brauer graphs}
Next we delete all multiple edges from a Brauer graph by the smash product with
a group that is the product of cyclic groups.

\begin{prp}
\label{prp:del-double}
Let $B = (E, \si, \ta, m)$ be a Brauer permutation without loops in $\Ga(B)$, and
$\{e_1, e_2,\dots, e_t\}$ a complete set of representatives of the set
$$
\{\ang{\si}e \in E/\si \mid \ang{\si}e = \ang{\si}f, \ang{\si}\ta(e) = \ang{\si}\ta(f)
\text{ for some $e \ne f \in E$}\}
$$
of vertices of $\Ga(B)$ such that there exist multiple edges
between the vertices $\ang{\si}e$ and $\ang{\si}\ta(e)$,
namely that there exists an edge $\ang{\ta}f$ between them different from the edge $\ang{\ta}e$.
Set $n_i:= n(e_i)$, let $G_i:= \ang{a_i \mid a_i^{n_i} = 1}$ be the cyclic group of order $n_i$
for all $i \in \{1, 2,\dots, t\}$, and take $G:= \prod_{i=1}^t G_i$, where we regard each $G_i$
to be a subgroup of $G$ by the canonical injection $G_i \to G$.
Define a $G$-weight $W$ on $B$ by
$$
W(e):=
\begin{cases}
a_i &\text{if $e \in \ang{\si}e_i$ for some $i$},\\
1 &\text{otherwise}
\end{cases}
$$
for all $e \in E$.
Then $W$ is admissible, and the Brauer graph $\Ga(B_W)$ has neither multiple edges nor loops.
\end{prp}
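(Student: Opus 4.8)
The plan is to establish admissibility cheaply, then isolate one structural fact about $\si_W$-orbits, and finally reduce the absence of multiple edges to a computation inside the direct product $G=\prod_{i=1}^t G_i$. First I would verify admissibility. Since the $\ang{\si}$-orbits are pairwise disjoint, $W$ is constant on each of them, equal to $a_i$ on $\ang{\si}e_i$ and to $1$ on every orbit distinct from all the $\ang{\si}e_i$. Hence the product of weights around a full orbit $\ang{\si}e$ is $a_i^{n_i}=1$ in the first case (as $a_i$ has order $n_i=n(e_i)$) and $1$ in the second, so $W(\mu_e)=1$ and a fortiori $W(\mu_e^{m(\ang{\si}e)})=1$ for every $e\in E$. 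Thus $W$ is admissible, and the earlier Proposition (formula \eqref{eq:new-multi}) then guarantees that $B_W=(E_W,\si_W,\ta_W,m_W)$ is a genuine Brauer permutation.

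The key fact I would isolate next is that for every $e_g\in E_W$ the first-coordinate projection restricts to a \emph{bijection} $\ang{\si_W}e_g\to\ang{\si}e$. Indeed $\si_W^{\,p}(e_g)=\si^p(e)_{W(\si^{p-1}e)\cdots W(e)\,g}$, so $W(\mu_e)=1$ gives $\si_W^{\,n(e)}(e_g)=e_g$ and hence $|\ang{\si_W}e_g|$ divides $n(e)$; since this projection is always surjective onto $\ang{\si}e$, we get $|\ang{\si_W}e_g|=n(e)$ and injectivity. Consequently $\ang{\si_W}e_g$ meets each element of $\ang{\si}e$ exactly once, and the element lying over $\si^p(e)$ is $\si^p(e)_{W(\si^{p-1}e)\cdots W(e)g}$. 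This ``unique lift with prescribed second coordinate'' is the engine of the whole argument. No loops is then immediate: if $\ang{\ta_W}e_g$ were a loop we would have $\ta_W(e_g)=\ta(e)_g\in\ang{\si_W}e_g$, and projecting gives $\ta(e)\in\ang{\si}e$, i.e.\ a loop $\ang{\ta}e$ in $\Ga(B)$, contradicting the hypothesis that $\Ga(B)$ has no loops.

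For the absence of multiple edges I would argue by contradiction. Suppose two distinct edges $\ang{\ta_W}e_g\ne\ang{\ta_W}f_h$ join the same pair of vertices; replacing $f_h$ by $\ta_W(f_h)$ if necessary, I may assume $\ang{\si_W}e_g=\ang{\si_W}f_h$ and $\ang{\si_W}\ta_W(e_g)=\ang{\si_W}\ta_W(f_h)$. Projecting yields $v:=\ang{\si}e=\ang{\si}f$ and $w:=\ang{\si}\ta(e)=\ang{\si}\ta(f)$, with $v\ne w$ since $\Ga(B)$ has no loops. If $e=f$, the bijection forces $g=h$, so the two edges coincide, a contradiction. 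If $e\ne f$, then $f\ne\ta(e)$ (otherwise $v=w$), so $\ang{\ta}e\ne\ang{\ta}f$ are genuine multiple edges of $\Ga(B)$ between $v$ and $w$; hence both $v=\ang{\si}e_i$ and $w=\ang{\si}e_j$ carry multiple edges and are represented, with $i\ne j$, $W\equiv a_i$ on $v$ and $W\equiv a_j$ on $w$. Writing $f=\si^p(e)$ and $\ta(f)=\si^q(\ta e)$, the unique-lift formula applied at $v$ gives $h=a_i^{\,p}g$ and applied at $w$ gives $h=a_j^{\,q}g$. Therefore $a_i^{\,p}=a_j^{\,q}$ in $G=\prod_k G_k$; since $i\ne j$ and this is a direct product, both sides are trivial, so $a_i^{\,p}=1$, i.e.\ $n_i\mid p$ and $f=\si^p(e)=e$, contradicting $e\ne f$. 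Hence $\Ga(B_W)$ has no multiple edges.

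The main obstacle is the multiple-edge step, specifically making the ``two endpoints, two factors'' bookkeeping airtight: one must reduce to the aligned matching of the four orbits, separate the degenerate case $e=f$ from genuine multiple edges, and then read off the two holonomies $a_i^{\,p}$ and $a_j^{\,q}$ at the two endpoints and invoke the direct-product structure. The first two ingredients (admissibility and the orbit bijection) are routine once stated; the real content is that assigning the weight on each multiple-edge vertex from its \emph{own} cyclic factor $G_i$ is precisely what makes the equation $a_i^{\,p}=a_j^{\,q}$ collapse to triviality.
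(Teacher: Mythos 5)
Your proposal is correct and follows essentially the same route as the paper's own proof: both compare the two expressions for $h$ obtained at the two endpoints (the holonomies $a_i^{\,p}$ and $a_j^{\,q}$) and use the fact that distinct factors of the direct product $G=\prod_k G_k$ intersect trivially to force $n_i\mid p$ and hence $f=e$, a contradiction. The one notable difference is that you isolate the orbit-projection bijection $\ang{\si_W}e_g\to\ang{\si}e$ as an explicit lemma and use it to dispose of the degenerate case $e=f$ (two distinct lifts of the same edge of $\Ga(B)$), a case the paper's proof passes over silently when it deduces $e=e_k$, $\ta(e)=e_l$ from the orbit equalities; in this respect your write-up is slightly more complete than the paper's.
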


\begin{proof}
It is obvious that $W$ is admissible by construction.
Assume that $\Ga(B_W)$ has multiple edges.
Then there exist two distinct elements $e_g, f_h \in E_W$ $(e, f \in E, g, h \in G)$ such that
$$
\ang{\si_W}e_g = \ang{\si_W}f_h \text{ and } \ang{\si_W}\ta_W(e_g) = \ang{\si_W}\ta_W(f_h).
$$
Set $n, n'$ to be the cardinality of the former set and the latter set, respectively.
Then there exist some integers $0 \le i \le n-1$ and $0 \le j \le n'-1$ such that
$$
f_h = \si_W^i(e_g) \text{ and } \ta_W(f_h) = \si_W^j\ta_W(e_g),
$$
which imply the following equalities.
\begin{gather}
f = \si^i(e), \ta(f) = \si^j\ta(e) \label{eq:E-part}\\
h = W(\si^{i-1}(e))\cdots W(\si(e))W(e)g, h = W(\si^{i-1}\ta(e))\cdots W(\si\ta(e))W(\ta(e))g
\label{eq:G-part}
\end{gather}
By \eqref{eq:E-part} we have $e = e_k, \ta(e) = e_l$ for some $k, l \in \{1, 2, \dots, t\}$, and
$$
\begin{gathered}
W(e) = W(\si(e)) = \cdots = W(\si^{i-1}(e)) = a_k, n = n_k, \text{and}\\
W(\ta(e)) = W(\si\ta(e)) = \cdots = W(\si^{i-1}\ta(e)) = a_l, n' = n_l.
\end{gathered}
$$
Then \eqref{eq:G-part} shows that
$h = a_k^i g, h = a_l^j g$, and hence we have
$G_k \ni a_k^i = a_l^j \in G_l$.
But since $G_k \cap G_l = \{1\}$, we have $a_k^i = a_l^j = 1$, which implies that $h = g$ and
that $n_k \mid i$.  The latter gives $i = 0$, and $f = e$ because $0 \le i \le n_k - 1$.
Hence $e_g = f_h$, a contradiction.
As a consequence, $\Ga(B_W)$ has no multiple edges.

Assume that $\Ga(B)$ has no loops but
that $\Ga(B_W)$ has a loop $\ang{\ta_W}e_g$ $(e \in E, g \in G)$.
Then $\ang{\si_W}e_g = \ang{\si_W}\ta_W(e_g) =\ang{\si_W}\ta(e)_g$, which implies that
$\ta(e) \in \ang{\si}e$, i.e., $\ang{\ta}e$ is a loop in $\Ga(B)$, a contradiction.

The last part of the proof of Proposition \ref{prp:del-loop} works for the remaining part:
If $B$ has a trivial multiplicity, then so does $B_W$ because $W$ is admissible.
\end{proof}

Sometimes it is possible to delete multiple edges by using simpler group.
We record one of those cases using one cyclic group in the following.

\begin{prp}
\label{prp:del-double-one}
Let $B = (E, \si, \ta, m)$ be a Brauer permutation without loops in $\Ga(B)$, and
$$
V:= \{\ang{\si}e \in E/\si \mid \ang{\si}e = \ang{\si}f, \ang{\si}\ta(e) = \ang{\si}\ta(f)
\text{ for some $e \ne f \in E$}\}
$$
the set of vertices of $\Ga(B)$ such that there exist multiple edges
between the vertices $\ang{\si}e$ and $\ang{\si}\ta(e)$.
Consider the graph $\De$ obtained from the subgraph of\/ $\Ga(B)$ consisting of the vertices in $V$ and 
multiple edges between them by replacing the set of all multiple edges between $\ang{\si}e$ and $\ang{\si}f$
to a single edge between them for all $\ang{\si}e, \ang{\si}f \in V$ $(e, f \in E)$.

Assume that $\De$ is a tree.
Then there exists a coloring of vertices $V$ of $\De$ by two colors $\{c, c'\}$
such that for each edge of $\De$ the colors of two end vertices are different.
Let $V'$ be the set of vertices of $\De$ with the color $c'$, and
$\{e_1, e_2,\dots, e_t\}$ a complete set of representatives of the set $V'$.
Set $n_i:= n(e_i)$ for all $i$, and $n$ to be the least common multiple of $n_i$'s.
Take $G:= \ang{a \mid a^n = 1}$ to be the cyclic group of order $n$.
Define a $G$-weight $W$ on $B$ by
$$
W(e):=
\begin{cases}
b_i:= a^{n/n_i} &\text{if $e \in \ang{\si}e_i$ for some $i$},\\
1 &\text{otherwise}
\end{cases}
$$
for all $e \in E$.
Then $W$ is admissible, and the Brauer graph $\Ga(B_W)$ has neither multiple edges nor loops.
\end{prp}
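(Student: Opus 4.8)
The plan is to follow the template of the proof of Proposition~\ref{prp:del-double}, but to exploit the bipartite colouring so that at any given edge only one of the two endpoints ever contributes a nontrivial group element; this is precisely the feature that lets a single cyclic group $G$ replace the product $\prod_i G_i$ used there. First I would record the one fact that drives everything: for this particular $W$, each $\ang{\si}$-orbit is \emph{weight-homogeneous}, i.e.\ all half-edges in a fixed orbit carry the same value of $W$ (namely $b_i$ when the orbit equals $\ang{\si}e_i$ for the unique such $i$, and $1$ otherwise), since membership in $\ang{\si}e_i$ depends only on the orbit. Consequently $W(\mu_e)=b_i^{\,n_i}=(a^{n/n_i})^{n_i}=a^{n}=1$ on a colour-$c'$ orbit and $W(\mu_e)=1$ on every other orbit, so in fact $W(\mu_e)=1$ for \emph{all} $e\in E$. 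In particular $W(\mu_e^{m(\ang{\si}e)})=W(\mu_e)^{m(\ang{\si}e)}=1$, which gives admissibility at once.

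For the graph-theoretic conclusions I would dispose of loops exactly as in Propositions~\ref{prp:del-loop} and~\ref{prp:del-double}: a loop $\ang{\ta_W}e_g$ in $\Ga(B_W)$ forces $\ang{\si_W}e_g=\ang{\si_W}\ta_W(e_g)=\ang{\si_W}\ta(e)_g$, which projects under the first projection $E_W\to E$ (which intertwines $\si_W$ with $\si$ and $\ta_W$ with $\ta$) to $\ang{\si}e=\ang{\si}\ta(e)$, a loop in $\Ga(B)$, contradicting the hypothesis. For multiple edges I would suppose, toward a contradiction, that there are distinct $e_g,f_h\in E_W$ with $\ang{\si_W}e_g=\ang{\si_W}f_h$ and $\ang{\si_W}\ta_W(e_g)=\ang{\si_W}\ta_W(f_h)$, and translate these as in Proposition~\ref{prp:del-double} into
\[
f=\si^{i}(e),\quad \ta(f)=\si^{j}\ta(e),\quad h=W(\si^{i-1}(e))\cdots W(e)\,g,\quad h=W(\si^{j-1}\ta(e))\cdots W(\ta(e))\,g,
\]
with $0\le i<|\ang{\si_W}e_g|$ and $0\le j<|\ang{\si_W}\ta(e)_g|$. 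Projecting gives $\ang{\si}e=\ang{\si}f$ and $\ang{\si}\ta(e)=\ang{\si}\ta(f)$.

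The core of the argument is to split according to whether the two lifted edges lie over the same edge of $\Ga(B)$. If $f\in\{e,\ta(e)\}$ the projected edges coincide: the possibility $f=\ta(e)$ would give $\ang{\si}e=\ang{\si}\ta(e)$, a loop in $\Ga(B)$ (excluded), while $f=e$ together with $\ang{\si_W}e_g=\ang{\si_W}e_h$ forces $i$ to be a multiple of $n(e)$, whence the product of weights collapses to a power of $W(\mu_e)=1$, giving $h=g$ and contradicting $e_g\ne f_h$. Otherwise $\{\ang{\si}e,\ang{\si}\ta(e)\}$ carries genuine multiple edges in $\Ga(B)$, so both endpoints lie in $V$ and this pair is an edge of $\De$; by the proper $2$-colouring exactly one endpoint has colour $c'$. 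If $\ang{\si}\ta(e)$ has colour $c'$ with representative $e_l$, then the $e$-side product is $1$ (its orbit has colour $c$, hence weight $1$) while the $\ta(e)$-side product equals $b_l^{\,j}$, so equating the two expressions for $h$ yields $b_l^{\,j}=1$; since that orbit is weight-homogeneous one checks $|\ang{\si_W}\ta(e)_g|=n_l$, so $0\le j<n_l$, and the order of $b_l$ being $n_l$ forces $j=0$, i.e.\ $\ta(f)=\ta(e)$ and $f=e$, a contradiction. The symmetric subcase (colour $c'$ on $\ang{\si}e$) forces $i=0$ and again $f=e$. The main obstacle I expect is organising this final step cleanly: one must verify the equality $|\ang{\si_W}e_g|=n_i$ for a colour-$c'$ orbit (so that the relevant index genuinely lies in $\{0,\dots,n_i-1\}$, where $b_i^{\,k}=1$ already forces $k=0$) and must be careful that the colouring truly guarantees that only one endpoint-product is a nontrivial power of a generator — which is exactly the point that makes one cyclic group suffice in place of the product of groups in Proposition~\ref{prp:del-double}.
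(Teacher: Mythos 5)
Your proposal is correct and takes essentially the same route as the paper's proof: reduce to the equations \eqref{eq:E-part}, \eqref{eq:G-part} of Proposition~\ref{prp:del-double}, then use the proper $2$-colouring so that one endpoint's weight-product is trivial while the other is a power of $b_k$, whose order $n_k$ together with the bound on the exponent forces $i=0$ (resp.\ $j=0$) and hence $e_g=f_h$, a contradiction. The only difference is one of care rather than of method: you explicitly dispose of the degenerate case $f\in\{e,\ta(e)\}$ (two lifts of the same edge of $\Ga(B)$) and you flag the needed equality $|\ang{\si_W}e_g|=n(e)$, both of which the paper's proof passes over with ``we may assume that $e=e_k$'' and the unexplained assertion $n=n_k$, so your write-up is, if anything, slightly more complete.
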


\begin{proof}
Note that the order of $b_i$ is $n_i$ for all $i$, which shows that $W$ is admissible.
Assume that $\Ga(B_W)$ has multiple edges.
Then as was described in the previous proof,
there exist two distinct elements $e_g, f_h \in E_W$ $(e, f \in E, g, h \in G)$ such that
$$
\ang{\si_W}e_g = \ang{\si_W}f_h \text{ and } \ang{\si_W}\ta_W(e_g) = \ang{\si_W}\ta_W(f_h).
$$
Set $n, n'$ to be the cardinality of the former set and the latter set, respectively.
Then there exist some integers $0 \le i \le n-1$ and $0 \le j \le n'-1$ such that
$$
f_h = \si_W^i(e_g) \text{ and } \ta_W(f_h) = \si_W^j\ta_W(e_g),
$$
which imply the equalities \eqref{eq:E-part}, \eqref{eq:G-part}.
By construction and by \eqref{eq:E-part} we may assume that $e = e_k$ for some $k\in \{1, 2, \dots, t\}$, and
 that $\ang{\si}\ta(e) \in V \setminus V'$.
 Then we have
 $$
 \begin{gathered}
W(e) = W(\si(e)) = \cdots = W(\si^{i-1}(e)) = b_k, n = n_k, \text{and}\\
W(\ta(e)) = W(\si\ta(e)) = \cdots = W(\si^{i-1}\ta(e)) = 1.
\end{gathered}
$$
Then \eqref{eq:G-part} shows that
$h = b_k^i g$ and $h =  g$, and hence $b_k^i = 1$.
Therefore we have $n_k \mid i$, and hence $i = 0$ because $0 \le i \le n_k - 1$.
Thus $f = e$ .
Hence $e_g = f_h$, a contradiction.
As a consequence, $\Ga(B_W)$ has no multiple edges.
The rest is proved by the same argument as in the proof of Proposition \ref{prp:del-double}. 
\end{proof}

\begin{rmk}
\label{rmk:GSS-der-eq}
 (1) In \cite[Propositions 6.4, 6.5, and 6.6]{GSS14} Green--Schroll--Snashall gave statements
similar to Propositions \ref{prp:del-multi}, \ref{prp:del-loop} and \ref{prp:del-double}.
In particular, Propositions \ref{prp:del-multi} and \ref{prp:del-double}
are essentially the same as their Propositions 6.4 and 6.6, respectively,
but we gave here simple and complete proofs.
On the other hand, Proposition \ref{prp:del-loop} is stronger than their Proposition 6.5:
They used a bigger group that depends on Brauer graphs, while
we used a very small group, the cyclic group of order 2 and the used group
is always the same.
We also added a simpler case of deletion of multiple edges in Proposition \ref{prp:del-double-one}.

(2) Using Propositions \ref{prp:del-multi}, \ref{prp:del-loop} and \ref{prp:del-double}
we can delete multiplicity, loops and multiple edges from Brauer graphs
by forming finite coverings (smash products with finite groups).
In particular, deletion of loops is an easy procedure, and uniformly we can take the group $G$
as a cyclic group of order 2.
Therefore, for instance, the derived equivalence classification of Brauer graph algebras
might be reduced to that of Brauer graph algebras without loops
using a covering theory for derived equivalences developed in
\cite{Asa97, Asa99, Asa02, Asa11, Asa15, Asa-a, Asa-b}.
\end{rmk}

\subsection{Deletion of cycles}
Finally we delete all cycles in a Brauer graph to have a tree by using an infinite group.
Since loops and double edges are special types of cycles,
we can use this procedure also to delete those at the same time.
This can be done independently of the multiplicity.

\begin{prp}\label{prp:del-cycle}
Let $B = (E, \si, \ta, m)$ be a Brauer permutation with a cycle in its Brauer graph.
Let $\{e_1, e_2, \dots, e_t\}$ be a complete set of representatives of the set $V$ of all
$\ang{\si}$-orbits $\ang{\si}e$ in $E/\si = \Ga(B)_0$ such that there exists a cycle in $\Ga(B)$
through the vertex $\ang{\si}e$.
For each $i = 1,2,\dots, t$ we set $n_i:= n(e_i)$ and $G_i:= \ang{a_i}$ to be an infinite
cyclic group if $n_i \ge 2$, else to be the unit group $\{1\}$.
Take $G:= \prod_{i=1}^t G_i$, where we regard each $G_i$ a subgroup of $G$
by the canonical injections $G_i \to G$.
Define a $G$-weight $W$ by
$$
W(e):=
\begin{cases}
a_i &\text{if $e = \si^j(e_i)$ for some $1 \le i \le t, 0 \le j \le n_i -2$ with $n_i \ge 2$},\\
a_i^{-n_i+1} & \text{if $e = \si^{n_i-1}(e_i)$ for some $1 \le i \le t$ with $n_i \ge 2$},\\
1 & \text{otherwise}
\end{cases}
$$
for all $e \in E$.
Then $W$ is admissible, and $B_W$ has a trivial multiplicity with $\Ga(B_W)$
an infinite tree.
\end{prp}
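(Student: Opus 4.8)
The plan is to settle the three easy assertions first and then put all the weight on the acyclicity of $\Ga(B_W)$, which is the real content. For admissibility I would evaluate the weight around each $\ang{\si}$-orbit: if $\ang{\si}e=\ang{\si}e_i$ lies in $V$ the product telescopes, $W(\mu_e)=a_i^{-n_i+1}\cdot a_i^{\,n_i-1}=1$, while if $\ang{\si}e\notin V$ all the weights are $1$; hence $W(\mu_e)=1$, and a fortiori $W(\mu_e^{m(\ang{\si}e)})=1$, so $W$ is admissible. Because $W(\mu_e)=1$ for every $e$, the $\ang{\si_W}$-orbits have the same size as the $\ang{\si}$-orbits downstairs, $|\ang{\si_W}e_g|=|\ang{\si}e|$, so by \eqref{eq:new-multi} the multiplicity is carried over unchanged, $m_W(\ang{\si_W}e_g)=m(\ang{\si}e)$; the triviality assertion is then the special case supplied by Corollary \ref{cor:trivial-multi}. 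Finally, $\Ga(B_W)$ is infinite because some vertex on a cycle has valency at least $2$, which forces $n_i\ge 2$ for some $i$ and hence an infinite factor $G_i$; therefore $G$, and with it $E_W=E\times G$, is infinite.

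For the tree property I would pass to covering-theoretic language. By Theorem \ref{thm:cover-Brauer} the graph $\Ga(B_W)$ is exactly the Brauer graph attached to $B_W$, and the first-coordinate map $F\colon\Ga(B_W)\to\Ga(B)$, $\ang{\si_W}e_g\mapsto\ang{\si}e$ and $\ang{\ta_W}e_g\mapsto\ang{\ta}e$, is a covering of graphs (it is a bijection on the edges incident to each vertex, since $|\ang{\si_W}e_g|=|\ang{\si}e|$) on which $G$ acts freely by right translation $e_g\mapsto e_{gc}$ with quotient $\Ga(B)$; thus $F$ is a regular Galois covering with group $G$. Consequently $\Ga(B_W)$ is a disjoint union of copies of the covering determined by $\ker\rho$, where $\rho\colon\pi_1(\Ga(B))\to G$ is the monodromy, and \emph{$\Ga(B_W)$ is a forest (each component a tree) precisely when $\rho$ is injective}. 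The monodromy is computed as a holonomy: lifting a reduced closed walk and tracking the $G$-coordinate, the only changes occur when the walk turns at a vertex $\ang{\si}e$, where rotating from the incoming half-edge $\bar q=\si^{s}(e_i)$ by $d$ steps to the outgoing one multiplies the $G$-coordinate by the partial product $W(\si^{s+d-1}(e_i))\cdots W(\si^{s}(e_i))$.

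The heart of the argument, and the step I expect to be hardest, is the injectivity of $\rho$, i.e.\ that no nontrivial reduced closed walk has trivial holonomy. I would combine two facts. First, the partial rotation products above are never trivial for a genuine turn: around a vertex $v_i\in V$ the weights consist of $n_i-1$ copies of $a_i$ together with a single $a_i^{-n_i+1}$, so any run of length $d$ with $1\le d\le n_i-1$ accumulates $a_i^{\,d}$ or $a_i^{\,d-n_i}$, in either case a nonzero power of $a_i$ (this is essentially Lemma \ref{lem:divides} again); since every vertex met by a cycle lies in $V$, each turn contributes a nonzero power of the generator attached to that vertex. Second, I would use the free-product (coproduct) structure of $G=\prod_i G_i$: contributions coming from distinct factors never cancel, so the holonomy of a reduced closed walk is a reduced word in the $a_i$'s, which is nonempty and hence $\ne 1$. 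A cycle in $\Ga(B_W)$ would then project to a reduced closed walk of trivial holonomy, a contradiction. The delicate bookkeeping is precisely the control of cancellation in $G$ when the underlying walk is allowed to revisit vertices; this is exactly the point where the free-product structure (rather than a direct product) is indispensable, and where the careful reduced-word analysis must be carried out. Once injectivity is in hand, connectedness follows by checking that $\rho$ is onto, and $\Ga(B_W)$ is the asserted infinite tree; a direct verification in the style of the proofs of Propositions \ref{prp:del-loop} and \ref{prp:del-double}, writing out the $\si_W$- and $\ta_W$-equations defining a hypothetical cycle, gives the same contradiction without the covering-space vocabulary.
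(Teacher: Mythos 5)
Your treatment of admissibility, of the multiplicity (you are right that the formula gives $m_W(\ang{\si_W}e_g)=m(\ang{\si}e)$, so the ``trivial multiplicity'' clause can only mean the case where $m$ itself is trivial), and your reduction of the tree property to injectivity of the monodromy $\rho\colon\pi_1(\Ga(B))\to G$ of the Galois covering $\Ga(B_W)\to\Ga(B)$ are all sound, and this covering-theoretic framing is genuinely different from the paper's proof, which instead derives a contradiction directly from a minimal cycle in $\Ga(B_W)$. However, the step you defer as ``delicate bookkeeping'' is not bookkeeping; it is a genuine gap, and it cannot be closed. Consecutive syllables of a holonomy word lie in the \emph{same} factor $G_i$ exactly when the projected walk crosses a loop of $\Ga(B)$, and with the prescribed weights $a_i,\dots,a_i,a_i^{-n_i+1}$ two such syllables can cancel even though each is a nonzero power of $a_i$. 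Concretely, take $E=\{1^+,1^-,2^+,2^-\}$, $\ta=(1^+\,1^-)(2^+\,2^-)$, $\si=(1^+\,1^-\,2^+\,2^-)$, $m\equiv 1$: one vertex carrying two loops, so $t=1$, $n_1=4$, $G=\ang{a}\iso\bbZ$ (free versus direct product is not even at issue), and $W(1^+)=W(1^-)=W(2^+)=a$, $W(2^-)=a^{-3}$. The $\ang{\si_W}$-orbits are $P_k=\{1^+_{a^k},\,1^-_{a^{k+1}},\,2^+_{a^{k+2}},\,2^-_{a^{k+3}}\}$ for $k\in\bbZ$; the edge $\{1^+_{a^k},1^-_{a^k}\}$ joins $P_{k-1}$ and $P_k$, while the edge $\{2^+_{a^k},2^-_{a^k}\}$ joins $P_{k-3}$ and $P_{k-2}$. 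Hence every pair $P_{k-1},P_k$ is joined by \emph{two} distinct edges, and $\Ga(B_W)$ is an infinite chain of double edges, not a tree. In your language: crossing loop $1$ and then loop $2$ contributes the turn syllables $a^{-2}$ and $a^{2}$, each a nonzero power as you observe, but they cancel. So for Brauer graphs with loops the proposition itself fails, and no reduced-word analysis can rescue your last step there.

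Your second instinct --- that the free product is indispensable and a direct product cannot work --- is also correct, but note that it contradicts the statement being proved: $G=\prod_i G_i$ with ``canonical injections'' is the direct product, and that is how the paper's own proof uses it (it concludes $a_{u(i)}^{l_i}=1$ factorwise ``by definition of $G$''). Under that reading even loop-free examples fail: for the theta graph $E=\{1^{\pm},2^{\pm},3^{\pm}\}$, $\si=(1^+\,2^+\,3^+)(1^-\,2^-\,3^-)$, $\ta=(1^+\,1^-)(2^+\,2^-)(3^+\,3^-)$, with $G=\ang{a_1}\times\ang{a_2}$ and weights $W(1^+)=W(2^+)=a_1$, $W(3^+)=a_1^{-2}$, $W(1^-)=W(2^-)=a_2$, $W(3^-)=a_2^{-2}$, the four vertices $\ang{\si_W}1^-_1$, $\ang{\si_W}1^+_{a_1^{-1}a_2}$, $\ang{\si_W}1^-_{a_1^{-1}a_2}$, $\ang{\si_W}1^+_{a_1^{-2}a_2^2}$ form a $4$-cycle in $\Ga(B_W)$; the underlying reason is the relation $(a_1^{-1}a_2)^2=a_1^{-2}a_2^{2}$ between the two basic holonomies, valid in $\bbZ^2$ but not in $\bbZ * \bbZ$. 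So your proposal proves (at best) a repaired statement, not the stated one. For comparison, the paper's proof founders at exactly the corresponding point: its assertion that minimality of the cycle upstairs forces the projected vertices to be pairwise distinct (injectivity of the map $u$) is refuted by both examples above, whose minimal cycles project to walks revisiting a vertex. Finally, your closing claim that connectedness follows because $\rho$ is onto is also incorrect --- already for the theta graph the holonomies generate a proper subgroup of $G$ --- so even after repairing the statement (free product, and loops excluded or the weights near loops modified) the honest conclusion is a disjoint union of infinite trees rather than a single tree.
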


\begin{proof}
Note first that there exists some $i = 1,2,\dots, t$ such that $n_i \ge 2$.
Indeed, if this does not hold and if $\Ga(B)$ has a cycle
\begin{equation}
\label{eq:cycle}
C: \xymatrix{
\ang{\si}f_{0} & \ang{\si}f_{1} & \cdots & \ang{\si}f_{r-1} &
\ang{\si}f_{r}\ \ (f_{r} = f_{0})
\ar@{-}"1,1";"1,2"
\ar@{-}"1,2";"1,3"
\ar@{-}"1,3";"1,4"
\ar@{-}"1,4";"1,5"
}
\end{equation}
for some $f_0, f_1, \dots, f_{r-1} \in E$ with $r$ minimal among all cycles, then
$\{f_0, f_1, \dots, f_{r-1}\} \subseteq \{e_i, e_2, \dots, e_t\}$ and
$\ang{\si}e_i = \{e_i\}$ for all $i$, thus we must have
$\ta(f_0) = f_1, \ta(f_1) = f_2, \dots, \ta(f_{r-1}) = f_0$.
But since $\ta$ is an involution acting freely on $E$,
we have $f_0 = \ta(f_1) = f_2$ and $f_0 \ne f_1$.
The minimality of $r$ shows that $r = 2$ and we must have double edges
between $\ang{\si}f_{0} = \{f_0\}$ and $\ang{\si}f_{1} = \{f_1\}$,
which is not possible.
In particular, $G$ is an infinite group by construction.

Note next that $W(\mu_e) = 1$ for all $e \in E$ by construction, which shows that
$W$ is admissible.
More precisely, for each $i \in \{1,2,\dots, t\}$, $j \in \{0, 1, \dots, n_i-1\}$ and
$k \in \{1,2,\dots, n_i\}$ we have
\begin{equation}
\label{eq:1-cycle}
W(\si^{j+k-1}(e_i))\cdots W(\si^{j+1}(e_i))W(\si^j(e_i)) = 1
\text{ if and only if }k = n_i.
\end{equation} 
 In particular, this shows that
 $|\ang{\si_W}e_g| = |\ang{\si}e|$ for all $e_g \in E_W$.
 Indeed, this is trivial if $\ang{\si}e \not\in V$.
 Otherwise $\ang{\si}e = \ang{\si}e_i$ for some $i \in \{1,2,\dots, t\}$, and
 $(\le)$ follows from \eqref{eq:1-cycle} for $j=0, k=n_i$
 and $(\ge)$ holds in general as in the proof of Corollary \ref{cor:trivial-multi}.

Finally assume that $\Ga(B_W)$ is not a tree.
Then we have a cycle $\tilde{C}$ of the form
$$
\xymatrix{
\ang{\si_W}f_{0,g_0} & \ang{\si_W}f_{1,g_1} & \cdots & \ang{\si_W}f_{r-1,g_{r-1}} &
\ang{\si_W}f_{r,g_r}\ \ (f_{r,g_r} = f_{0, g_0})
\ar@{-}"1,1";"1,2"
\ar@{-}"1,2";"1,3"
\ar@{-}"1,3";"1,4"
\ar@{-}"1,4";"1,5"
}
$$
in $\Ga(B_W)$ for some $f_{i,g_i} \in E_W\ (f_i \in E, g_i \in G, i=1,2, \dots, r)$ with $r \ge 1$.
We may assume that $r$ is minimal among such numbers.
Then for each  $i = 0, 1, \dots, r-1$ we have
\begin{equation}
\label{eq:induction-vertex}
f_{i+1, g_{i+1}} = \ta_W\si_W^{k_{i}}(f_{i,g_{i}})
\end{equation}
for some integer $k_i$ with $1 \le k_i \le |\ang{\si_W}f_{i,g_i}| = |\ang{\si}f_i| = n_{u(i)}$.
By applying the first and second projections to \eqref{eq:induction-vertex} we have
\begin{align}\label{eq:pr1}
f_{i+1} &= \ta\si^{k_{i}}(f_{i}) \text{ and}\\
\label{eq:pr2}
g_{i+1} &= W(\si^{k_{i}-1}(f_{i}))\cdots W(\si(f_{i}))W(f_{i})g_{i}.
\end{align}
By \eqref{eq:pr1} there exists a cycle $C$ of the form \eqref{eq:cycle}
in $\Ga(B)$ through the vertex $\ang{\si}f_0$.
Thus all $\ang{\si}f_i$ are in $V$, and hence there exists a map $u\colon \{1,2,\dots, r\} \to \{1,2,\dots, t\}$ such that for each $i \in \{1,2,\dots, r\}$
we have $\ang{\si}f_i = \ang{\si}e_{u(i)}$, thus
there exists
some $j_i \in \{1,2,\dots, n_{u(i)}\}$ such that $f_i = \si^{j_i}e_{u(i)}$.
Then by \eqref{eq:pr2} we have
$$
g_{i+1} = W(\si^{j_i +k_{i}-1}(e_{u(i)}))\cdots W(\si^{j_i+1}(e_{u(i)}))W(\si^{j_i}(e_{u(i)}))g_i,
$$
where 
\begin{equation}\label{eq:form-W}
W(\si^{j_i +k_{i}-1}(e_{u(i)}))\cdots W(\si^{j_i+1}(e_{u(i)}))W(\si^{j_i}(e_{u(i)})) = a_{u(i)}^{l_i}
\end{equation}
for some integer $l_i \ge 0$
by construction of $W$.
Therefore we have
\begin{equation}
g_{i+1} = a_{u(i)}^{l_i}g_i.
\end{equation}
Set $e_g:= f_{0,g_0}$.
Then this yields
$$
g_{i+1} = a_{u(i)}^{l_i}\cdots a_{u(1)}^{l_1}a_{u(0)}^{l_0}g
$$
for all $i = 0, 1, \dots, r-1$,
In particular, for $i = r-1$ we have
$$
g = a_{u(r-1)}^{l_{r-1}}\cdots a_{u(1)}^{l_1}a_{u(0)}^{l_0}g.
\text{ and hence }
1 = a_{u(r-1)}^{l_{r-1}}\cdots a_{u(1)}^{l_1}a_{u(0)}^{l_0}.
$$
The minimality of $r$ implies that the map $u$ is injective, thus
$u(0), u(1), \dots, u(r-1)$ are pairwise different.
Therefore by definition of $G$ we have 
$a_{u(i)}^{l_i} = 1$ for all $i = 0, 1, \dots, r-1$.
This together with \eqref{eq:form-W} and \eqref{eq:1-cycle} shows that
for each $i = 0, 1, \dots, r-1$ we have $k_i = n_i$, $g_i = g$,
and hence $f_{i+1,g_{i+1}} = \ta(f_i)_g =\ta_W(f_{i,g_i})$.
Then since $\ta_W$ is an involution acting freely on $E_W$,
the same argument as in the beginning of  the proof applies to
have $r = 2$ and $\tilde{C}$ cannot be a cycle in $\Ga(B_W)$,
which is a contradiction.
\end{proof}

\begin{rmk}
In many cases we can take the group $G$ much smaller than
in Proposition \ref{prp:del-cycle} as shown in Example \ref{exm:del-cycle}.
\end{rmk}

\section{Examples}

In this section we collect some examples of smash products of Brauer graph algebras
to illustrate the contents of the previous section.
\begin{exm}[Deletion of multiplicity]
Consider the following Brauer permutation $B$ with a non-trivial multiplicity
and its Brauer graph $\Ga(B)$:
$$
\xymatrix{
1^+
\ar@(ul,dl)^{(2)} & {1^-\ } \ar@(ur,dr)_{(3)}
\ar@{--}"1,1";"1,2"
}
\qquad\qquad
\xymatrix@C=40pt
{
\la & \mu.
\ar@{-}"1,1";"1,2"^1^(0){(2)}^(1){(3)}
}
$$
To apply Proposition \ref{prp:del-multi} we take $G:= \ang{a \mid a^6=1}$ and define
an admissible $G$-weight $W$ as follows:
$$
\xymatrix{
1^+
\ar@(ul,dl)^{(2)}_{a^3} & {1^-\ } \ar@(ur,dr)_{(3)}^{a^2.}
\ar@{--}"1,1";"1,2"
}
$$
Then $B_W$ and $\Ga(B_W)$ are as follows:
$$
\xymatrix{
1^-_0 & 1^+_0 & 1^+_3 & 1^-_3\\
1^-_2 & 1^+_2 & 1^+_5 & 1^-_5\\
1^-_4 & 1^+_4 & 1^+_1 & 1^-_1
\ar@{--}"1,1";"1,2"
\ar@{--}"1,3";"1,4"
\ar@{--}"2,1";"2,2"
\ar@{--}"2,3";"2,4"
\ar@{--}"3,1";"3,2"
\ar@{--}"3,3";"3,4"
\ar@/^/"1,2";"1,3"
\ar@/^/"1,3";"1,2"
\ar@/^/"2,2";"2,3"
\ar@/^/"2,3";"2,2"
\ar@/^/"3,2";"3,3"
\ar@/^/"3,3";"3,2"
\ar"1,1";"2,1"
\ar"2,1";"3,1"
\ar@/^20pt/"3,1";"1,1"
\ar"1,4";"2,4"
\ar"2,4";"3,4"
\ar@/_20pt/"3,4";"1,4"
}
\qquad\qquad
\xymatrix{
&\mu_0\\
\la_0 & \mu_2 & \la_1,\\
&\mu_4
\ar@{-}"2,1";"1,2"
\ar@{-}"2,1";"2,2"
\ar@{-}"2,1";"3,2"
\ar@{-}"2,3";"1,2"
\ar@{-}"2,3";"2,2"
\ar@{-}"2,3";"3,2"
}
$$
where vertices $1^{\pm}_{a^i}$ are denoted by $1^{\pm}_{i}$ for short.
Certainly $\Ga(B_W)$ has a trivial multiplicity.
\end{exm}

\begin{exm}[Deletion of loops]
Let $B$ be the following Brauer permutation with two loops in its Brauer graph $\Ga(B)$:
$$
\vcenter{
\xymatrix{
1^+ & 2^+ & 3^+\\
1^- & 2^- & 3^-
\ar"1,2";"1,1"
\ar"2,1";"1,2"
\ar@/_10pt/"1,1";"2,1"
\ar"1,3";"2,2"
\ar"2,2";"2,3"
\ar@/_10pt/"2,3";"1,3"
\ar@{--}"1,1";"2,1"
\ar@{--}"1,2";"2,2"
\ar@{--}"1,3";"2,3"
}}
\qquad\qquad
\xymatrix{
\la
\ar@{-}@(ul,dl)_1 & {\mu} \ar@{-}@(ur,dr)^3
\ar@{-}"1,1";"1,2"^2
}.
$$
To Apply Proposition \ref{prp:del-loop} we take $G:=\ang{a \mid a^2 = 1}$
and define an admissible $G$-weight $W$ by
$$
\vcenter{
\xymatrix{
1^+ & 2^+ & 3^+\\
1^- & 2^- & 3^-
\ar"1,2";"1,1"_1
\ar"2,1";"1,2"_a
\ar@/_10pt/"1,1";"2,1"_a
\ar"1,3";"2,2"_a
\ar"2,2";"2,3"_1
\ar@/_10pt/"2,3";"1,3"_a
\ar@{--}"1,1";"2,1"
\ar@{--}"1,2";"2,2"
\ar@{--}"1,3";"2,3"
}}.
$$
Then $B_W$ and $\Ga(B_W)$ are computed as follows:
$$
\vcenter{
\xymatrix{
1^+_1 & 1^-_a & 2^+_1 & a^-_1 & 3^-_1 & 3^+_a\\
1^-_1 & 1^+_a & 2^+_a & 2^-_a & 3^+_1 & 3^-_a
\ar"1,1";"1,2"
\ar"1,2";"1,3"
\ar@/_15pt/"1,3";"1,1"
\ar"1,4";"1,5"
\ar"1,5";"1,6"
\ar@/_15pt/"1,6";"1,4"
\ar"2,3";"2,2"
\ar"2,2";"2,1"
\ar@/_15pt/"2,1";"2,3"
\ar"2,6";"2,5"
\ar"2,5";"2,4"
\ar@/_15pt/"2,4";"2,6"
\ar@{--}"1,1";"2,1"
\ar@{--}"1,2";"2,2"
\ar@{--}"1,5";"2,5"
\ar@{--}"1,6";"2,6"
\ar@{--}"1,3";"1,4"
\ar@{--}"2,3";"2,4"
}}
\qquad\qquad
\vcenter{
\xymatrix{
\la_1 & \mu_1\\
\la_a & \mu_a
\ar@/_/@{-}"1,1";"2,1"_{1_1}
\ar@/^/@{-}"1,1";"2,1"^{1_a}
\ar@/_/@{-}"1,2";"2,2"_{3_1}
\ar@/^/@{-}"1,2";"2,2"^{3_a}
\ar@{-}"1,1";"1,2"^{2_1}
\ar@{-}"2,1";"2,2"_{2_a}
}},
$$
and $\Ga(B_W)$ has no loops.
\end{exm}

\begin{exm}[Deletion of multiple edges]
Let $B$ be the following Brauer permutation with two pairs of double edges
in its Brauer graph $\Ga(B)$:
$$
\vcenter{
\xymatrix{
1^+ & 2^+ & 3^+ & 4^+\\
1^- & 2^- & 3^- & 4^-
\ar"1,1";"1,2"
\ar"1,2";"1,3"
\ar"1,3";"1,4"
\ar@/_15pt/"1,4";"1,1"_{}
\ar@/^/"2,1";"2,2"
\ar@/^/"2,2";"2,1"
\ar@/^/"2,3";"2,4"
\ar@/^/"2,4";"2,3"
\ar@{--}"1,1";"2,1"
\ar@{--}"1,2";"2,2"
\ar@{--}"1,3";"2,3"
\ar@{--}"1,4";"2,4"
}
}
\qquad\qquad
\xymatrix{
\la & \mu & \nu
\ar@{-}@/^/"1,1";"1,2"^1
\ar@{-}@/_/"1,1";"1,2"_2
\ar@{-}@/^/"1,2";"1,3"^4
\ar@{-}@/_/"1,2";"1,3"_3
}.
$$
Then the graph $\De$ defined in Proposition \ref{prp:del-double-one}
is a Dynkin graph $A_3$, which is a tree.
Therefore this proposition can be applied.
We choose $V'$ to be the set consisting of $\la:= \ang{\si}1^-$ and
$\nu:= \ang{\si}3^-$.
We then take $G:= \ang{a \mid a^2 = 1}$, and define an admissible
$G$-weight $W$ by
$$
\vcenter{
\xymatrix{
1^+ & 2^+ & 3^+ & 4^+\\
1^- & 2^- & 3^- & 4^-
\ar"1,1";"1,2"_1
\ar"1,2";"1,3"_1
\ar"1,3";"1,4"_1
\ar@/_15pt/"1,4";"1,1"_1
\ar@/^/"2,1";"2,2"^a
\ar@/^/"2,2";"2,1"^a
\ar@/^/"2,3";"2,4"^a
\ar@/^/"2,4";"2,3"^a
\ar@{--}"1,1";"2,1"
\ar@{--}"1,2";"2,2"
\ar@{--}"1,3";"2,3"
\ar@{--}"1,4";"2,4"
}}.
$$
Then $B_W$ and $\Ga(B_W)$ are as follows.
$$
\vcenter{
\xymatrix{
1^+_1 & 2^+_1 & 3^+_1 & 4^+_1\\
1^-_1 & 2^-_1 & 3^-_1 & 4^-_1\\
2^-_a & 1^-_a & 4^-_a & 3^-_a\\
2^+_a & 1^+_a & 4^+_a & 3^+_a,
\ar"1,1";"1,2"
\ar"1,2";"1,3"
\ar"1,3";"1,4"
\ar"4,4";"4,3"
\ar"4,3";"4,2"
\ar"4,2";"4,1"
\ar@/_15pt/"1,4";"1,1"_{}
\ar@/_15pt/"4,1";"4,4"_{}
\ar@/^/"2,1";"3,1"
\ar@/^/"3,1";"2,1"
\ar@/^/"2,2";"3,2"
\ar@/^/"3,2";"2,2"
\ar@/^/"2,3";"3,3"
\ar@/^/"3,3";"2,3"
\ar@/^/"2,4";"3,4"
\ar@/^/"3,4";"2,4"
\ar@{--}"1,1";"2,1"
\ar@{--}"1,2";"2,2"
\ar@{--}"1,3";"2,3"
\ar@{--}"1,4";"2,4"
\ar@{--}"3,1";"4,1"
\ar@{--}"3,2";"4,2"
\ar@{--}"3,3";"4,3"
\ar@{--}"3,4";"4,4"
}}
\qquad\qquad
\vcenter{
\xymatrix{
&&\mu_1\\
\la_1& \la_a &&\nu_1 & \nu_a\\
&&\mu_a
\ar@{-}"1,3";"2,1"
\ar@{-}"1,3";"2,2"
\ar@{-}"1,3";"2,4"
\ar@{-}"1,3";"2,5"
\ar@{-}"3,3";"2,1"
\ar@{-}"3,3";"2,2"
\ar@{-}"3,3";"2,4"
\ar@{-}"3,3";"2,5"
}},
$$
and $\Ga(B_W)$ has no multiple edges
\end{exm}

\begin{exm}[Smash product with a non-abelian group]
Let $B$ be the Brauer permutation given in Example \ref{exm:first} and
$G:= \ang{a, b \mid a^3 = 1, b^2 = 1, aba = b}$ the symmetric group of
order 6.
We define an admissible $G$-weight $W$ by
$$
\vcenter{
\xymatrix{
1^+ & 2^+\\
1^- & 2^-\ 
\ar"1,2";"1,1"_a
\ar"2,1";"1,2"_a
\ar@/_1pc/"1,1";"2,1"_a
\ar@{--}"1,1";"2,1"
\ar@{--}"1,2";"2,2"
\ar@(ur,dr)_{(2)}^b
}}.
$$
Then $B_W$ is given as follows:
$$
\xymatrix@R=15pt@C=10pt{
&&&&&&1^-_a && 1^+_1\\
&&&&&&&2^+_{a^2}\\
&&&&&&&2^-_{a^2}\\
&&&&&&&2^-_{ab}\\
&&&&&&&2^+_{ab}\\
&&&&&&1^+_{a^2b}&&1^-_b\\
&&&&&1^-_{a^2b}&&&&1^+_b\\
&&&&2^+_b&&1^+_{ab} &&1^-_{ab} &&2^+_{a^2b}\\
&&&2^-_b&&&&&&&&2^-_{a^2b}&\\
&&2^-_1&&&&&&&&&&2^-_a\\
1^+_a&2^+_1 &&&&&&&&&&&& 2^+_a & 1^-_1\\
&1^-_{a^2} &&&&&&&&&&&& 1^+_{a^2}
\ar"1,9";"1,7" \ar"1,7";"2,8" \ar"2,8";"1,9"
\ar@{--}"2,8";"3,8"
\ar@/_/"3,8";"4,8" \ar@/_/"4,8";"3,8"
\ar@{--}"4,8";"5,8"
\ar"5,8";"6,7" \ar"6,7";"6,9" \ar"6,9";"5,8"
\ar@{--}"6,7";"7,6" \ar@{--}"6,9";"7,10"
\ar"7,6";"8,5" \ar"8,5";"8,7" \ar"8,7";"7,6" \ar"7,10";"8,9" \ar"8,9";"8,11" \ar"8,11";"7,10" \ar@{--}"8,7";"8,9"
\ar@{--}"8,5";"9,4" \ar@{--}"8,11";"9,12"
\ar@/_/"9,4";"10,3" \ar@/_/"10,3";"9,4" \ar@/_/"9,12";"10,13" \ar@/_/"10,13";"9,12"
\ar@{--}"10,3";"11,2" \ar@{--}"10,13";"11,14"
\ar"11,2";"11,1" \ar"11,1";"12,2" \ar"12,2";"11,2"
\ar"11,14";"12,14" \ar"12,14";"11,15" \ar"11,15";"11,14"
\ar@{--}"1,7";"11,1" \ar@{--}"12,2";"12,14" \ar@{--}"11,15";"1,9"}
$$
Therefore $\Ga(B) = \xymatrix@C=40pt{
\la \ar@{-}@(dl,ul)^1 & \mu 
\ar@{-}"1,1";"1,2"^2^(1){(2)}
}$ changes to $\Ga(B_W)$ below:
$$
\qquad\qquad
\vcenter{
\xymatrix@R=30pt@C=55pt@M=0pt{
&&&\circ\\
&&& \circ\\
&&&\circ\\
&&\circ&& \circ\\
&\circ&&&& \circ\\
\circ&&&&&&\circ.
\ar@{-}"1,4";"2,4"^{2_{a^2}}
\ar@{-}"2,4";"3,4"^{2_{ab}}
\ar@{-}"3,4";"4,3"_{1_{a^2b}} \ar@{-}"3,4";"4,5"^{1_b} \ar@{-}"4,3";"4,5"_{1_{ab}}
\ar@{-}"4,3";"5,2"^{2_b} \ar@{-}"4,5";"5,6"_{2_{a^2b}}
\ar@{-}"5,2";"6,1"^{2_1} \ar@{-}"5,6";"6,7"_{2_a}
\ar@{-}"1,4";"6,1"_{1_a} \ar@{-}"6,1";"6,7"_{1_{a^2}} \ar@{-}"6,7";"1,4"_{1_1}
}}
$$
\end{exm}

\begin{exm}[Smash product with an infinite group: deletion of a cycle]
\label{exm:del-cycle}
Take $G:=\ang{a}$ to be the infinite cyclic group,
and let $(B, W)$ the following Brauer permutation with an admissible $G$-weight
($\Ga(B)$ is presented on the right)
$$
\vcenter{
\xymatrix{
& 1^+ & 3^+\\
1^-&&& 3^-\\
2^- &&&2^+
\ar@/^/"1,2";"1,3"^1 \ar@/^/"1,3";"1,2"^1
\ar@/^/"2,1";"3,1"^{a\inv} \ar@/^/"3,1";"2,1"^a
\ar@/^/"2,4";"3,4"^1 \ar@/^/"3,4";"2,4"^1
\ar@{--}"1,2";"2,1" \ar@{--}"3,1";"3,4" \ar@{--}"2,4";"1,3"
}
}\qquad\qquad
\vcenter{
\xymatrix@M=0pt{
&\circ\\
\circ && \circ
\ar@{-}"1,2";"2,1"_1 \ar@{-}"2,1";"2,3"_2 \ar@{-}"2,3";"1,2"_3
}}\ .
$$
Then $B_W$ and $\Ga(B_W)$ are given as follows, respectively:
$$
\xymatrix{
\cdots & 1^+_1 & 2^+_1 & 2^-_1 & 3^+_a & \cdots\\
\cdots & 3^+_1 & 3^-_1 & 1^-_a & 1^+_a & \cdots
\ar@{--}"1,1";"1,2" \ar@{--}"1,3";"1,4" \ar@{--}"1,5";"1,6"
\ar@{--}"2,2";"2,3" \ar@{--}"2,4";"2,5"
\ar@/^/"1,2";"2,2" \ar@/^/"2,2";"1,2"
\ar@/^/"1,3";"2,3" \ar@/^/"2,3";"1,3"
\ar@/^/"1,4";"2,4" \ar@/^/"2,4";"1,4"
\ar@/^/"1,5";"2,5" \ar@/^/"2,5";"1,5"
}
$$
and
$$
\xymatrix@M=0pt{
\cdots\  & \circ & \circ & \circ & \circ &\  \cdots
\ar@{-}"1,1";"1,2"^{1_1}
\ar@{-}"1,2";"1,3"^{3_1}
\ar@{-}"1,3";"1,4"^{2_1}
\ar@{-}"1,4";"1,5"^{1_a}
\ar@{-}"1,5";"1,6"^{3_a}
}.
$$
In this example the 3-cycle $\Ga(B)$ is transformed to an infinite Brauer tree $\Ga(B_W)$.
by the infinite cyclic group, which is smaller than
the construction in Proposition \ref{prp:del-cycle}.
\end{exm}

\bibliographystyle{plain}

\end{document}